 %%This is a standard LaTeX2e article document template. personal version 12/5/200%%
\documentclass[12pt,twoside]{amsart}
%%%%%%%%%%%%%%%%%%%%%%%%%%%%%%%packages%%%%%%%%%%%%%%%%%%%%%%%%%%%%%%%%%%%%%%%%%%%%%%%%%%%%%%%%%%
%\pagestyle{empty}

\usepackage[usenames]{color}

\usepackage{latexsym}
\usepackage{amssymb}
\usepackage{amsfonts}
\usepackage{amstext}
\usepackage{multicol}
\usepackage{amsmath}
\usepackage{amsthm}
\usepackage{setspace}
\usepackage{appendix}
\usepackage{bm}
%%%%%%%%%%%%%%%%%%%%%%%%%%%%%%%formatting%%%%%%%%%%%%%%%%%%%%%%%%%%%%%%%%%%%%%%%%%%%%%%%%%%%%%%%

%%%%%%%%%%%%%%%%%%%%%%%%%%%%%%%%%%%%%%%%%%%%%%%%%%%%%%%%%%%%%%%%%%%%%%%%%%%%%%%%%%%%%%%%%%%%%%%

\newtheorem{theorem}{Theorem}[section]
\newtheorem{lemma}[theorem]{Lemma}
\newtheorem{corollary}[theorem]{Corollary}
\newtheorem{proposition}[theorem]{Proposition}
\newtheorem{definition}[theorem]{Definition}
\newtheorem{remark}[theorem]{Remark}
\newtheorem{assumption}[theorem]{Assumption}

\def\P{{\mathbb P}}
\def\E{{\mathbb E}}
\def\e{{\mathcal E}}
\def\Z{{\mathbb Z}}
\def\V{{\Vert}}

\def\R{{\mathbb R}}
\def\HS{{\rm HS}}
\def\L{{\mathcal L}}
\def\A{{\mathcal A}}
\def\N{{\mathcal N}}
\def\M{{\mathcal M}}
\def\vv{\varepsilon}
\def\span{{\rm span}}
\def\tr{{\rm Tr}}
\def\comp{{\rm Comp}}
\def\incomp{{\rm Incomp}}
\def\dom{{\rm Dom}}
\def\sparse{{\rm Sparse}}
\def\dist{{\rm dist}}

\begin{document}
\title{Investigate invertibility of sparse symmetric matrix}
\date{}
\author{Feng Wei}
\thanks{Partially supported by M. Rudelson's  NSF Grant DMS-1464514,  and USAF Grant FA9550-14-1-0009. }

\maketitle

\begin{abstract}
In this paper, we investigate the invertibility of sparse symmetric matrices. We will show that for  an $n\times n$ sparse  symmetric random matrix $A$ with $A_{ij} = \delta_{ij} \xi_{ij}$ is invertible with high probability. 
Here, $\delta_{ij}$s, $i\ge j$ are i.i.d. Bernoulli random variables with $\P \left( \xi_{ij}=1 \right) =p \ge n^{-c}$, $\xi_{ij}, i\ge j$ are i.i.d.  random variables with mean 0, variance 1 and finite forth moment $M_4$, and $c$ is constant depending on $M_4$. 
More precisely, 
$$
 s_{\rm min} (A) > \vv \sqrt{\frac{p}{n}}. 
$$
with high probability.
\end{abstract}

%\begin{flushleft}
\section{Introduction}

Singular values and eigenvalues are both important characteristics of random matrices and their magnitude agrees on symmetric matrices. Non-asymptotic random matrix theory studies  spectral properties of random matrices, that is to provides probabilistic bounds for  singular values, eigenvalues, etc., for random matrices of a large fixed size. In the non-asymptotic viewpoint, study of singular values are more motivated due to  geometric problems in high dimensional Euclidean spaces.

Recall that for  an $n\times n$ real matrix, the singular values $s_k(A)$ of $A$, where $k=1,2,\cdots, n$, are the eigenvalues of $\sqrt{A^T A}$ arranged in non-increasing order. Among all the singulars, the two extreme ones are of the most importance. When we view  matrix $A$ as a linear operator $\R^n\rightarrow\R^n$, we may want control its behavior by finding or giving  useful upper and lower bounds on $A$. Such bounds are provided by the smallest and largest singular values of $A$ denoted as $s_{\rm min}(A)$ and $s_{\rm max}(A)$. The extreme singular values are also referred as  the operator norms of the linear operators $A$ and $A^{-1}$ between Euclidean spaces, that is to say $s_{\rm min}(A)= 1/ \V A^{-1}\V $ and $s_{\rm max}(A) = \V A \V$.

Due to the geometric interpretation, understanding the behavior of extreme singular values of random matrices are important in many applications. For instance, in computer science and numerical linear algebra, the condition number $s_{\rm max}(A)/s_{\rm min}(A)$ is widely used to measure stability or efficiency of algorithms as the example we give in early section. In geometric functional analysis, probabilistic construction of linear operators using random matrices often depend on good bounds on the norms of these operators and their inverses \cite{AGA}. In statistics,  applications of extreme singular values can be found  from the analysis of sample covariance matrices $ A^TA$ \cite{RVbookintrotorm}.

It is worth mentioning that many non-asymptotic  results are known under a somewhat stronger sub-gaussian moment assumption on the entries of  $A$, which requires their distribution to decay as fast as the normal random variable:
\begin{definition}
(sub-gaussian random variables). A random variable $X$ is sub-gaussian
if there exists $K > 0$ called the sub-gaussian moment of X such that
$$
P(|X| > t) \le  2\exp(-t^2/K^2) {\rm \ for\ } t >0.
$$
\end{definition}

Many classical random variables are actually sub-gaussian, such as Gaussian random variables, Bernoulli random variables, Bounded random variables, etc..

Von Neumann and Goldstine conjectured that with high probability $s_{\rm min}(a)\sim n^{-1/2}$ and $s_{\rm max}(a)\sim n^{1/2}$ \cite{NAtheoryofRM}. The upper bound on largest singular value was established earlier but the lower bound of smallest singular value remained open for decades. Progress has been made by Smale,  Edelman and Szarek in the Gaussian matrices case \cite{SSeaa,AEecnrm,SScnrm}. However, their approaches do not work for matrices other than Gaussian as they depend on explicit formula for the joint density of the singular values. 

The first polynomial bound of quantitative invertibility was obtained in \cite{InvofRM} by M. Rudelson, where it was proved that the smallest singular value of a square i.i.d. sub-gaussian matrix is bounded below by $n^{-3/2}$ with high probability. Later an almost sharp bound was proved by M. Rudelson and R. Vershynin in \cite{LOandInvofRM} up to a constant factor for general random matrices. 

\begin{theorem}\label{ssvintro}
{\rm (Smallest singular value of square random matrices)}. Let $A$ be
an $n \times n$ random matrix whose entries are independent and identically distributed
sub-gaussian random variables with zero mean and unit variance. Then
$$
\P\left( s_{\rm min}(A)\le \vv n^{-1/2} \right)\le C\vv + c^n,\ \vv \ge 0
$$
where $C>0,c\in (0,1)$ depend only on the sub-gaussian moment of the entries.
\end{theorem}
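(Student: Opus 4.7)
The plan is to follow the strategy of Rudelson and Vershynin by writing
\[ s_{\rm min}(A) = \inf_{x \in S^{n-1}} \V A x \V_2 \]
and analyzing the infimum separately on two geometrically distinct pieces of the unit sphere. Fix small absolute constants $\delta, \rho \in (0,1)$ and set
\[ \comp(\delta,\rho) = \{ x \in S^{n-1} : \dist(x, \sparse(\delta n)) \le \rho \}, \qquad \incomp(\delta,\rho) = S^{n-1} \setminus \comp(\delta,\rho), \]
where $\sparse(k)$ denotes the set of unit vectors with at most $k$ nonzero coordinates. Then
\[ \P\bigl( s_{\rm min}(A) \le \vv n^{-1/2} \bigr) \le \P\Bigl( \inf_{x \in \comp} \V Ax \V \le c_1 \Bigr) + \P\Bigl( \inf_{x \in \incomp} \V Ax \V \le \vv n^{-1/2} \Bigr). \]

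For the compressible term I would use a standard $\varepsilon$-net argument. A counting bound produces a $\rho$-net $\N \subset \comp(\delta,\rho)$ of cardinality at most $C_0^n$ for a constant $C_0$ that can be made small by choosing $\delta, \rho$ small. For each fixed $x \in S^{n-1}$, the sub-gaussian small-ball estimate $\P(|\langle a_i, x\rangle| \le c) \le 1 - c'$ applied to each row $a_i$ and combined with independence (tensorization) gives $\P(\V Ax \V \le c_1 \sqrt{n}) \le c_2^n$ with $c_2 < 1$. Arranging $c_2 C_0 < 1$, a union bound over $\N$ together with the high-probability upper bound $\V A \V \le C \sqrt{n}$ (to absorb the $\rho$-approximation) yields a bound of the form $c^n$ for the compressible piece.

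The incompressible term carries the real difficulty. I would reduce it to column distance estimates via the inequality
\[ \P\Bigl( \inf_{x \in \incomp} \V Ax \V \le \vv \rho n^{-1/2} \Bigr) \le \frac{1}{\delta n}\sum_{k=1}^n \P\bigl( \dist(A_k, H_k) \le \vv \bigr), \]
where $A_k$ is the $k$-th column of $A$ and $H_k = \span(A_j : j \ne k)$. Writing $\dist(A_k, H_k) = |\langle A_k, X^{(k)} \rangle|$ for a unit vector $X^{(k)}$ orthogonal to $H_k$ and independent of $A_k$, the problem becomes a conditional small-ball estimate for the sum $\sum_j \xi_{kj} X^{(k)}_j$ of independent terms.

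The heart of the argument, and the expected main obstacle, is showing that the random normal $X^{(k)}$ is \emph{unstructured} with exponentially high probability, in the sense that its essential least common denominator $\L(X^{(k)})$ is at least $e^{cn}$. Granted this, the Littlewood-Offord / Esseen-type inequality of Rudelson-Vershynin provides
\[ \P\bigl( |\langle A_k, X^{(k)} \rangle| \le \vv \,\bigm|\, X^{(k)} \bigr) \le C\vv + C\L(X^{(k)})^{-1}, \]
and integration over the event that $\L(X^{(k)})$ is large yields $C\vv + c^n$ per column, which after summation and combination with the compressible bound produces the claimed $C\vv + c^n$. The structural step itself is handled by a second stratified net argument on the sphere, partitioning unit vectors by the dyadic level of $\L$ and showing that for each level the set of structured directions is small enough that $A$ is unlikely to map any of them close to zero.
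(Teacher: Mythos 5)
Your proposal correctly reproduces the Rudelson--Vershynin argument from \cite{LOandInvofRM}: decomposition into compressible and incompressible vectors, a net plus tensorized small-ball bound for the compressible piece, reduction of the incompressible piece to column distances, and a Littlewood--Offord estimate conditioned on the random normal having large LCD, established by a stratified net over sublevel sets of the LCD. Note that the paper itself does not prove Theorem \ref{ssvintro}; it cites it as background and then adapts exactly this framework (with the additional ingredients of dominated vectors, regularized LCD, and quadratic-form decoupling) to the harder sparse symmetric setting in its own Theorem \ref{mth}. One small notational slip: you write $\L(X^{(k)})$ for the LCD, but $\L$ denotes the L\'evy concentration function in the paper; the LCD is $D_L(\cdot)$, and the cited small-ball estimate has the form $\L(S,\vv)\le C L(\vv + 1/D_L(x))$ rather than the simpler bound you wrote, though this does not affect the structure of the argument.
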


The theory and result was later extended to rectangular random matrices of arbitrary dimensions in \cite{SMstSvRect}.
\begin{theorem}\label{ssvrintro}{\rm  (Smallest singular value of rectangular random matrices).}
Let $G$ be an $N\times n$ random matrix, $N\ge n$, whose elements are independent copies of a centered sub-gaussian random variable with unit variance. Then for every $\vv >0$, we have
$$
\P \left( s_n(G) \le \vv \left( \sqrt{N}-\sqrt{n-1}\right)\right)\le (C\vv)^{N-n+1}+e^{-C'N}
$$
where $C,C'>0$ depend (polynomially) only on the sub-gaussian moment $K$.
\end{theorem}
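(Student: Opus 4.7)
The plan is to start from the variational identity $s_n(G) = \inf_{x \in S^{n-1}} \V Gx\V_2$ and split the sphere into a compressible part $\comp(\delta,\rho)$ (unit vectors within $\ell_2$-distance $\rho$ of some $\delta n$-sparse unit vector) and its complement $\incomp(\delta,\rho)$, for small absolute constants $\delta,\rho$. The two regimes require very different tools, and the final bound comes from a union bound over them.

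For the compressible part I would run an $\vv$-net argument. For each fixed $x \in S^{n-1}$, $\V Gx\V_2^2$ is a sum of $N$ i.i.d.\ sub-gaussian squares of variance one, so $\P(\V Gx\V_2 \le c\sqrt{N}) \le e^{-c'N}$. Combining this with the operator-norm bound $\V G\V \le C\sqrt{N}$ (which fails with probability at most $e^{-cN}$) and the elementary fact that $\comp(\delta,\rho)$ admits an $\vv$-net of cardinality at most $(C/\delta)^{\delta n}$ for a suitably chosen $\vv$, a union bound yields $\P(\inf_{x \in \comp} \V Gx\V_2 \le c\sqrt{N}) \le e^{-c''N}$, which (using $N \ge n$) is absorbed into the $e^{-C'N}$ term of the conclusion.

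For the incompressible part I would apply the invertibility-via-distance reduction. Denote the columns of $G$ by $X_1,\ldots,X_n$ and let $H_k = \span(X_i : i \ne k)$. The Rudelson-Vershynin inequality gives
\begin{equation*}
\inf_{x \in \incomp(\delta,\rho)} \V Gx\V_2 \;\ge\; \frac{\rho}{\sqrt{n}} \min_{k \le n} \dist(X_k, H_k),
\end{equation*}
and $\dist(X_k, H_k) = \V P_{H_k^{\perp}} X_k \V_2$, where $H_k^{\perp} \subset \R^N$ is a random subspace of dimension $d := N-n+1$ almost surely and is independent of $X_k$. The task thus reduces to the small-ball estimate
\begin{equation*}
\P\!\left( \V P_{H_k^{\perp}} X_k \V_2 \le \vv \sqrt{d} \right) \le (C\vv)^{d} + e^{-cN}.
\end{equation*}

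The main obstacle is this last small-ball bound with the sharp exponent $d = N-n+1$. The strategy is to show that, conditionally on $\{X_i\}_{i \ne k}$, the subspace $H_k^{\perp}$ is, with probability $1 - e^{-cN}$, spanned by vectors that are themselves incompressible with large Least Common Denominator in $\R^N$. Given such structural information on $H_k^{\perp}$, the tensorized Halász-Esseen-type Littlewood-Offord inequality, applied to the $d$ inner products of $X_k$ against an orthonormal basis of $H_k^{\perp}$, delivers the factor $(C\vv)^{d}$. Establishing the large-LCD claim for $H_k^{\perp}$ is the delicate technical core; it proceeds by a further net-counting argument on $S^{N-1}$, where the cardinality of the net must be calibrated precisely against the desired exponent. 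This step is what genuinely distinguishes the rectangular proof from the square case of Theorem \ref{ssvintro}, and is where the real difficulty lies.
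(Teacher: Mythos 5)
The statement you are proving, Theorem \ref{ssvrintro}, is quoted in the paper from \cite{SMstSvRect} without proof, so the relevant comparison is against the Rudelson--Vershynin argument in that reference. Your high-level plan (compressible/incompressible decomposition, reduction to distances, structural theorem for $H_k^{\perp}$, small-ball bound with exponent $d=N-n+1$) matches theirs, but there is a genuine gap in the crucial Step 4. You propose to obtain the factor $(C\vv)^d$ by applying a \emph{tensorized} Littlewood--Offord inequality to the $d$ inner products $\langle X_k, e_i\rangle$ against an orthonormal basis $e_1,\dots,e_d$ of $H_k^{\perp}$. This does not work in the sub-Gaussian (non-Gaussian) setting: the random variables $\langle X_k,e_i\rangle$ are correlated linear combinations of the i.i.d.\ entries of $X_k$, so they are not independent, and the tensorization lemma (which requires independent coordinates) cannot be invoked. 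This is exactly the obstruction that forces Rudelson and Vershynin to introduce a genuinely multi-dimensional LCD attached to the projection $P_{H_k^{\perp}}$ itself (not to individual vectors of $H_k^{\perp}$), together with an Esseen-type small-ball inequality in $\R^d$ for the whole vector $P_{H_k^{\perp}}X_k$. Without that machinery the sharp exponent $d$ cannot be extracted from one-dimensional ingredients.

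A secondary issue concerns the reduction to distances. The inequality $\inf_{x\in\incomp}\V Gx\V_2\ge \frac{\rho}{\sqrt{n}}\min_k \dist(X_k,H_k)$ is true but too weak: bounding the probability that the minimum over $k$ is small costs a union bound over $n$ indices, which multiplies the target probability by $n$. The correct reduction (as in Lemma \ref{invviadist} of this paper and in \cite{LOandInvofRM}) is an averaging one: incompressibility supplies $\gtrsim \delta n$ coordinates with $|x_k|\gtrsim 1/\sqrt{n}$, and a Markov-type counting argument over the set $\{k:\dist(X_k,H_k)<\vv\}$ replaces the union bound by an average, giving a prefactor $O(1/(\delta n))\cdot n = O(1/\delta)$. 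For $d$ proportional to $N$ the extra factor of $n$ would be harmless, but when $d$ is small (the regime $N-n=O(1)$, including the square case) the multiplicative $n$ destroys the stated bound, so the averaging form is essential.
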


These results was extended and improved  in a number of papers, including \cite{TaoVuAnnals,TVsmallestSVdist,LOandInvofRM,InvofsparsenonHerm,Invofheavytail,SMstSvRect,Invofsym}. However, first such result sparse matrix appeared until \cite{InvofsparsenonHerm}, where Basak and Rudelson proved that for a non-Hermitian i.i.d. sparse matrix,
\begin{equation}
\begin{array}{rl}
& \P \left\{ s_{{\rm min}} (A) \ge C\vv \displaystyle\exp\left(
-c\frac{\log(1/p_n)}{\log(np_n)}
\right)
\displaystyle\sqrt{\frac{p_n}{n}}\bigcap 
{\V A \V \le C\sqrt{pn}}
\right\}\\
\le & \vv +\exp(-cnp_n)
\end{array}
\end{equation}
where $\P(a_{ij}\neq 0)=p_n$. One may notice that for $p_n \ge n^{-c}$, where $0<c<1$, the above result of Basak and Rudelson implies an upper bound on condition number. That is to say
$$
\sigma(A_n) := \frac{s_{{\rm max}}(A)}{s_{{\rm min}}(A_n)} \le n
$$
with high probability. This generalized the optimal upper bound on condition number for non sparse random matrices. So it is a nature question to ask, whether one can use the similar technique to develop the invertibility for sparse symmetric matrices. 

One  contribution of  \cite{InvofsparsenonHerm} is a combinatorial approach to address the sparsity in estimating the norm of $Ax$ for a sparse matrix $A$ and sparse vector $x$. The combinatorial lemma is generalizable in symmetric matrices case which makes it possible to  prove quantitative invertibility for symmetric sparse matrix together with a decoupling method in \cite{Invofsym}. This work is motivated by the above result of non-Hermitian sparse matrices of A. Basak and M. Rudelson and the paper of R. Vershynin for non-sparse symmetric matrices, see \cite{Invofsym}. Without special notice, we always assume the following for our random matrix $A_n$:

\begin{assumption}\label{assump}
$A_n=\{a_{i,j}\}_{i,j=1}^n$ is an $n\times n$ symmetric random matrix with i.i.d entries on the upper triangular part, and $a_{i,j}=\xi_{ij}\delta_{ij}$. Here $\delta_{ij}$s are i.i.d. Bernoulli random variables with $\P(\delta_{ij}=1)=p_n$. $\xi_{ij}$s are i.i.d. random variables with mean zero, variance 1 and fourth moment bounded by $M_4^4$. 
\end{assumption}

\begin{remark}
{\rm The dependence of $c_p$ on $M_4$ is tracked in the the proof.}
\end{remark}
\begin{remark}
{\rm Through out the paper, we are going to call $p_n$ the sparsity level of $A$.}
\end{remark}
\begin{remark}
{\rm For the ease of writing, hereafter, we will often drop the sub-script in $A_n, p_n$, write $A,p$ instead.  But please have it in mind that the sparsity level will depend on $n$.}
\end{remark}

Our proof will also use an upper bound for operator norm. For convenience, throughout the proof, we denote $\e_{op}$ as the event that $\V A \V\le C_{op}\sqrt{pn}$.

Our main theorem is the following:
\begin{theorem}\label{mth}{\rm (Smallest singular value for sparse symmetric matrices.)}
For $A$ satisfies Assumption \ref{assump} and  $p\ge n^{-c_p}$, where $c_p$ is a constant depending only on $M_4,C_{op}$, one has
$$
\P\left( s_n(A)\le \vv \sqrt{\frac{p}{n}} \cap \e_{op} \right) \le C_{\ref{mth}}\vv^{1/9} + e^{-n^{c_{\ref{mth}}}}.
$$
Here $C_{\ref{mth}},c_{\ref{mth}}>0$ depend only on $M_4$ and $C_{op}$.
\end{theorem}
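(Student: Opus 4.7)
\medskip
\noindent\textbf{Proof proposal.} The plan is to adapt the Rudelson--Vershynin framework for quantitative invertibility to the sparse symmetric setting by combining Vershynin's decoupling method for symmetric matrices with the combinatorial control of sparse $\|Ax\|$ from Basak--Rudelson. I would decompose $S^{n-1}=\comp(\delta,\rho)\cup\incomp(\delta,\rho)$ into compressible vectors (those at $\ell_2$-distance at most $\rho$ from some $\delta n$-sparse unit vector) and incompressible ones, with $\delta$ and $\rho$ chosen as suitable small powers of $p$, and then bound $\inf_{x}\|Ax\|$ on each piece separately while restricted to the event $\e_{op}$.

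For the compressible piece, a pointwise bound $\|Ax\|\gtrsim\sqrt{p}$ for a single sparse $x$ can be obtained from the symmetric version of the Basak--Rudelson combinatorial lemma with probability at least $1-e^{-cn^{\gamma}}$. Combining this with an $\varepsilon$-net on $\comp(\delta,\rho)$ of cardinality at most $(C/\delta)^{\delta n}$, and using $\|A\|\le C_{op}\sqrt{pn}$ on $\e_{op}$ to control the approximation error, a union bound yields
$$
\P\Bigl(\inf_{x\in\comp(\delta,\rho)}\|Ax\|\le c\sqrt{p/n}\,,\,\e_{op}\Bigr)\le e^{-cn^{c'}},
$$
provided $c_p$ is small enough in terms of $M_4$ and $C_{op}$. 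For the incompressible piece I would invoke Vershynin's invertibility-via-distance reduction for symmetric matrices,
$$
\P\Bigl(\inf_{x\in\incomp}\|Ax\|\le\vv\sqrt{p/n}\Bigr)\le\frac{1}{n}\sum_{k=1}^n\P\bigl(\dist(A_k,H_k)\le C\vv\sqrt{p}\bigr),
$$
where $H_k=\span\{A_j:j\neq k\}$. The dependence between $A_k$ and the $k$-th row of $H_k$ forced by symmetry is neutralized by the usual decoupling that partitions $[n]$ into two halves and works on the off-diagonal block; afterwards, $\dist(A_k,H_k)\gtrsim|\langle Y,v\rangle|$, with $Y$ a sparse i.i.d.\ vector and $v$ an incompressible unit vector normal to a subspace independent of $Y$.

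The main obstacle, and the origin of the exponent $1/9$, is the sparse small-ball probability estimate for $\langle Y,v\rangle$. The classical Littlewood--Offord/LCD theory is weakened when $Y$ is Bernoulli-sparse, because short sub-sums of the coordinates of $v$ can lie near the integer lattice even when the global least common denominator of $v$ is large. One therefore needs a sparse-adapted least common denominator $D_p(v)$ together with an inversion-of-randomness lemma showing that $D_p(v)\ge n^{\kappa}$ with overwhelming probability as $v$ ranges over unit normals arising from the random subspaces $H_k$. The exceptional small-$D_p$ set must be covered by a refined $p$-dependent net whose cardinality grows like $\exp(n^{\beta})$ for some $\beta<1$; balancing $\beta$ against the sparsity parameters $\delta,\rho$ and against the loss from tensorization under sparsity produces the $\vv^{1/9}$ rate. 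Tracking every constant through the fourth-moment input to the combinatorial lemma and through the operator-norm condition defining $\e_{op}$ yields the explicit $c_p=c_p(M_4,C_{op})$ stated in the theorem.
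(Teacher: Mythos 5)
Your high-level plan matches the paper's: split $S^{n-1}$ into compressible and incompressible vectors, handle the compressible part with a symmetric analog of the Basak--Rudelson combinatorial lemma plus an $\varepsilon$-net argument on $\mathcal{E}_{op}$, and handle the incompressible part via invertibility-through-distance, Vershynin-style decoupling, and a (regularized) LCD structure theorem for the random normals. That is the same framework as Sections~\ref{SecIoverC}--\ref{SecMproof}.

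One place where your sketch of the incompressible step is imprecise enough to matter: you write the post-decoupling quantity as $\dist(A_k,H_k)\gtrsim|\langle Y,v\rangle|$ with $v$ a unit normal to an independent subspace, which is the non-symmetric picture. For a symmetric matrix the relevant identity is the exact one of Proposition~\ref{distqua},
$$
\dist(A_1,H_1)=\frac{|\langle B^{-1}X,X\rangle-a_{11}|}{\sqrt{1+\|B^{-1}X\|_2^2}},
$$
so one must (i) control the random denominator by showing $\|B^{-1}X\|_2\sim\sqrt{p}\,\|B^{-1}\|_{\mathrm{HS}}$ with good probability (Proposition~\ref{siaix}), which itself relies on the structure theorem applied to $B^{-1}e_k/\|B^{-1}e_k\|_2$, and (ii) decouple the \emph{quadratic} form $\langle B^{-1}X,X\rangle$ via Lemma~\ref{declemma} before conditioning reduces it to a linear form $\langle x_0,P_JX\rangle$, where $x_0=B^{-1}(P_{J^c}(X-X'))/\|\cdot\|$. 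Both (i) and (ii) feed into the structure theorem, and it is precisely the iterated square roots coming from decoupling and the Fubini step in the proof of Theorem~\ref{sbqua}, optimized over $\lambda$, that produce the exponent $1/9$; the sparse net cardinality governs the admissible range of $p$ (hence $c_p$) rather than the power of $\vv$. With that correction your outline aligns with the paper's argument, but without the explicit quadratic-form reduction and denominator control the incompressible bound would not close.
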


\begin{remark}
{\rm Theorem \ref{mth} can be also generalized to the case  $A$ is replaced by $A+D$ where $D$ is a diagonal matrix  and $\V D \V = O(\sqrt{pn})$. For simplicity, we do not include the proof in this paper, see \cite{InvofsparsenonHerm} for more details.}
\end{remark}

Recall that for a random variable $Z$  on a probability space $(\Omega, \mathcal{A},\P)$. The sub-gaussin norm or $\psi_2$-norm of $Z$ is defined as
$$
\V Z \V_{\psi_2}:=\inf\left\{ \lambda>0: \E \exp\left( \frac{| Z |}{\lambda} \right)^2   \le 2\right\}.
$$
A random variable is called sub-gaussian if it has finite sub-gaussian norm. For properties of sub-gaussian random variables, see \cite{LecnotesRMMR}. 
For sparse symmetric matrix with $\xi_{ij}$s are sub-gaussian, we have the following result about spectral norm. 

\begin{theorem}\label{upbdforop}
There exists $C'_{\ref{upbdforop}}\ge 1$ such that the following holds. Let $n\in \N$ and $p \in (0,1]$ be such that $p \ge C'_{\ref{upbdforop}} \frac{\log n}{n}$. Let $A_n$ be a random matrix as in Assumption \ref{assump}. Moreover, we require $\xi_{ij}$ to be sub-Gaussian random variables in the assumption. Then there exist positive constants $C_{\ref{upbdforop}},c_{\ref{upbdforop}}$ depending on the sub-Gaussian norm of ${\xi_{ij}}$, such that
$$
\P\left(
\V A_n \V \ge C_{\ref{upbdforop}}\sqrt{np_n}) \le \exp(-c_{\ref{upbdforop}}np
\right).
$$
\end{theorem}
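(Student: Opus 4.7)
The plan is to establish the theorem by the trace--moment (Füredi--Komlós) method: I will bound $\E\tr(A^{2k})$ for $k$ of order $np$ and then apply Markov's inequality to $\V A\V^{2k}\le\tr(A^{2k})$. The starting point is the expansion
$$\E\tr(A^{2k})=\sum_{W}\prod_{e}\E A_e^{m_e(W)},$$
where $W$ ranges over closed walks of length $2k$ on $\{1,\dots,n\}$ and $m_e(W)$ is the multiplicity of edge $e=\{u,v\}$ in $W$. Since $\E A_{ij}=0$, only walks in which every distinct edge is used at least twice can contribute. Moreover $\E A_e^{m}=p\cdot\E\xi^{m}$ for $m\ge 1$ (the factor $p$ appears once, regardless of how many times the edge is traversed, because $\delta_{ij}^m=\delta_{ij}$), and the sub-Gaussian assumption $\V\xi\V_{\psi_2}\le K$ yields $\E|\xi|^m\le(C_1K)^m m^{m/2}$. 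So each distinct edge of multiplicity $m_e\ge 2$ contributes at most $p\,(C_1K)^{m_e}m_e^{m_e/2}$.

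The combinatorial heart of the proof is the classification of walks by type, namely the number of distinct vertices $v$ and distinct edges $e$ of $W$. Connectedness of the edge-support together with the no-leaf constraint forces $v\le e+1\le k+1$, with equality characterizing the dominant \emph{tree walks} (each edge used exactly twice, arranged along a plane tree). Standard Füredi--Komlós bookkeeping bounds the number of walks of type $(v,e)$ by $n^v\cdot 4^k\cdot k^{C(k-e)}\cdot(2k)^{C(e-v+1)}$, and combining with the per-walk weight above and summing in $(v,e)$ yields
$$\E\tr(A^{2k})\le(C_2K^2)^k\cdot n\cdot(np)^k,$$
provided $k\le c_0\,np$. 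Under this constraint the sub-Gaussian factors $m_e^{m_e/2}$ from high-multiplicity edges are offset by the extra powers of $p$ accompanying edges and vertices beyond the tree skeleton.

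With the moment bound in hand, Markov gives
$$\P(\V A\V\ge t)\le t^{-2k}\E\tr(A^{2k})\le n\cdot\left(\frac{C_2K^2\,np}{t^2}\right)^{k}.$$
Take $t=C_{\ref{upbdforop}}\sqrt{np}$ with $C_{\ref{upbdforop}}^2\ge 4C_2K^2$ so that the bracket is $\le 1/4$, and set $k=\lfloor c_0\,np\rfloor$. The hypothesis $p\ge C'_{\ref{upbdforop}}\log n/n$ with $C'_{\ref{upbdforop}}$ large enough guarantees $k\log 4\ge 2\log n$, hence $n\cdot 4^{-k}\le e^{-c_{\ref{upbdforop}}np}$, which is the desired conclusion.

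The main obstacle is the combinatorial step: counting walks of type $(v,e)$ precisely enough to control the sub-Gaussian weights $m_e^{m_e/2}$ for high-multiplicity edges. The key balancing identity is that each excess edge or vertex beyond the tree skeleton costs roughly a factor $k$ in counting but saves a factor $p$ via the Bernoulli indicators; with $k\sim np$ these cancel and the tree-like term dominates. An alternative route, truncating $\xi_{ij}$ at height $O(K\sqrt{\log n})$ and invoking the bounded-variable Füredi--Komlós estimate directly, yields the same bound with slightly cleaner bookkeeping; the truncation tail is absorbed into the $e^{-c_{\ref{upbdforop}}np}$ term using $p\ge C'_{\ref{upbdforop}}\log n/n$.
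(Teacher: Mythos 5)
Your plan is a genuine alternative to the paper's. The paper does no walk counting at all: it symmetrizes ($\eta_{ij}=\xi_{ij}-\xi'_{ij}$), uses Jensen's inequality (convexity of $\V\cdot\V$) to pass to the symmetrized matrix $B_n$, compares $\E_\eta\tr\big((B_nB_n^*)^{q/2}\big)$ to the Gaussian analogue $\E_g\V W_n\V^q$ by matching moments of $\eta_{ij}$ with $C_1 g_{ij}$ where $g_{ij}\sim N(0,1)$, applies the Bandeira--van Handel inequality $\E[\V W_n\V\,|\,\bm{\delta}]\lesssim\sigma+\sigma_*\sqrt{\log n}$ conditionally on the Bernoulli pattern restricted to the good event $\Omega$ (every row has at most $\bar Cpn$ nonzeros), and converts the conditional mean bound into a $q$-th moment bound via Gaussian concentration of the Lipschitz functional $\V W_n\V$; taking $q=pn$ and Markov's inequality finishes. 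You instead run the F\"uredi--Koml\'os trace--moment machine directly on $A$ with $k\asymp np$. The paper's route is shorter and outsources the hard analysis to two black boxes (Bandeira--van Handel and Gaussian concentration); yours is self-contained and reference-light, at the price of carrying out the combinatorics by hand.

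That combinatorics is the one place where your sketch is substantially less than a proof. The asserted count $n^v\,4^k\,k^{C(k-e)}(2k)^{C(e-v+1)}$ of walks of type $(v,e)$ is not the standard F\"uredi--Koml\'os/Vu bound, which tracks only the vertex count $v$; separating the edge count $e$ so as to harvest one factor of $p$ per distinct edge is precisely the refinement the sparse regime needs, and it has to be established rather than invoked. One viable path is a skeleton-tree encoding recording the innovation times, the positions and targets of excess-cycle steps, and the positions of extra traversals, paying $O(k)$ per unit of $(k-e)$ and $(e+1-v)$ and then absorbing everything into a $(C_2K^2np)^k$ bound by choosing $C_{\ref{upbdforop}}$ large. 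You must also control the sub-Gaussian factors $\prod_e m_e^{m_e/2}$ when many excursions pile onto few edges at $k\asymp np$; the truncation at height $O(K\sqrt{\log n})$ you mention, together with $\log n\lesssim np/C'_{\ref{upbdforop}}$, is indeed the cleaner way to do that. So the plan is sound and is a well-worn alternative route, but as written the walk count is asserted, not proved.
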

Theorem \ref{mth} and \ref{upbdforop} together give us the following result:
\begin{corollary}
\label{mthcor}{\rm (Smallest singular value for sparse symmetric sub-gaussian matrices.)}
For $A$ as in Theorem \ref{mth} and moreover $\xi_{ij}$s are sub-gaussian random variables, one has
$$
\P\left( s_n(A)\le \vv \sqrt{\frac{p}{n}} \right) \le C_{\ref{mthcor}}\vv^{1/9} + e^{-n^{c_{\ref{mthcor}}}} + \exp(-c'_{\ref{mthcor}}np).
$$
Here $C_{\ref{mthcor}},c_{\ref{mthcor}},c'_{\ref{mthcor}}>0$ depend only on the sub-gaussian norm.
\end{corollary}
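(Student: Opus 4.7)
The plan is to derive Corollary \ref{mthcor} as a straightforward consequence of the two preceding theorems via a union bound. The key observation is that Theorem \ref{mth} provides a quantitative invertibility bound only on the event $\e_{op}=\{\V A\V \le C_{op}\sqrt{pn}\}$, while Theorem \ref{upbdforop} supplies precisely the complementary control: under the sub-gaussian hypothesis it shows $\P(\e_{op}^c)$ is exponentially small, provided the constant defining $\e_{op}$ is chosen compatibly.

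First I would match constants by taking $C_{op}$ in the definition of $\e_{op}$ to equal $C_{\ref{upbdforop}}$ furnished by Theorem \ref{upbdforop}. Since the constants $C_{\ref{mth}}, c_{\ref{mth}}, c_p$ in Theorem \ref{mth} are allowed to depend on $C_{op}$, and since $C_{\ref{upbdforop}}$ depends only on the sub-gaussian norm of the $\xi_{ij}$ (which in turn controls $M_4$), this choice keeps all resulting constants dependent only on the sub-gaussian norm, as required. Next I would verify the hypotheses of Theorem \ref{upbdforop}: it requires $p\ge C'_{\ref{upbdforop}}\log n/n$, but by assumption $p\ge n^{-c_p}$ with $c_p<1$, so for $n$ sufficiently large this holds automatically; for the finitely many exceptional small $n$ the bound can be absorbed into the constants by adjusting $C_{\ref{mthcor}}$.

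Then I would write the union bound
\begin{align*}
\P\!\left(s_n(A)\le \vv\sqrt{\tfrac{p}{n}}\right)
&\le \P\!\left(s_n(A)\le \vv\sqrt{\tfrac{p}{n}} \cap \e_{op}\right) + \P(\e_{op}^c) \\
&\le C_{\ref{mth}}\vv^{1/9} + e^{-n^{c_{\ref{mth}}}} + \exp(-c_{\ref{upbdforop}}np),
\end{align*}
where the first two terms come from Theorem \ref{mth} and the last from Theorem \ref{upbdforop} applied to $\e_{op}^c=\{\V A\V > C_{\ref{upbdforop}}\sqrt{np}\}$. Setting $C_{\ref{mthcor}}=C_{\ref{mth}}$, $c_{\ref{mthcor}}=c_{\ref{mth}}$, and $c'_{\ref{mthcor}}=c_{\ref{upbdforop}}$ yields exactly the claimed bound.

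There is no substantive obstacle here: the corollary is essentially bookkeeping. The only mild care point is ensuring the constant $C_{op}$ appearing implicitly in the event $\e_{op}$ of Theorem \ref{mth} is legitimately chosen to be the one delivered by Theorem \ref{upbdforop}, and that the sparsity hypothesis $p\ge n^{-c_p}$ is strong enough to trigger Theorem \ref{upbdforop}; both points are immediate from the statements as written.
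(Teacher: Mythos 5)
Your proof is correct and is exactly the argument the paper intends: the corollary is stated immediately after the observation that Theorem \ref{mth} and Theorem \ref{upbdforop} ``together give'' it, i.e.\ a union bound over $\e_{op}$ and $\e_{op}^c$ with $C_{op}$ set to $C_{\ref{upbdforop}}$. Nothing further is required.
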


\begin{remark}
{\rm For sparse sub-gaussian matrix, above theorems directly yield a bound on the condition number that $n \gtrsim \sigma(A):=\frac{s_{max}(A)}{s_{min}(A)}$ with high probability.}
\end{remark}

{\bf Outline of paper.}

\begin{itemize}

\item In Section \ref{SecNandP}, we recall the necessary concepts and some technical lemmas. We also recall the method of separating compressible and incompressible vectors (see \cite{LOandInvofRM}) in Section \ref{SecNandP}.

\item In Section \ref{SecIoverC}, we bound $\V Ax \V_2$ over compressible vectors. The method we used to bound the infimum over compressible vectors for sparse matrix is invented in Section 3 in  \cite{InvofsparsenonHerm}. 

\item In Section \ref{SecIoverInc}, \ref{SecStruc} and \ref{SecSBquad} we bound $\V Ax \V_2$ over incompressible vectors. In Sec \ref{SecIoverInc} we recall the definition of LCD and regularized LCD and reduce the infimum to a distance problem which can be written as a quadratic form, see \cite{Invofsym}. In Section \ref{SecStruc}, we prove the structure theorem for large LCD vectors which is an analog of Theorem 7.1 in \cite{InvofRM}. In Section \ref{SecSBquad}, we estimate the distance problem using the decoupling technique in \cite{InvofRM}.

\item In Section \ref{SecMproof}, we combine the estimate for compressible and incompressible part to prove our main theorem. 

\item In Section \ref{SecSpectNorm}, we prove an upper bound of the spectral norm for sparse symmetric sub-gaussian matrix which is an analog of Theorem 1.7 in \cite{InvofsparsenonHerm}.

\end{itemize}

\section{Notations and Preliminaries}\label{SecNandP}

We first explain our notations. Through out the paper $c,C,c_0,c_1,$ $c',\cdots$ denote absolute constants or constants that are going to be used only locally. These constants are different in proofs of different lemmas or theorems. Constants with double indices or letter indices are global constants, they are uniform through out the paper and we will keep track of these constants through out the paper, for example $c_{3.1},c_{3.2}',c_p$.

First, recall that
$$
s_{{\rm min}} (A_n) = \inf_{x\in S^{n-1}} \left\Vert A_n x \right\Vert.
$$
Thus, to prove Theorem \ref{mth}, we need to find a lower bound on the infimum. For dense matrices, this can be done via decomposing the unit sphere into compressible and incompressible vectors, and obtaining necessary bound on the infimum on both of these parts, see \cite{LOandInvofRM, SMstSvRect}. To carry out the argument for sparse matrices, Basak and Rudelson introduced another class of vectors which they called dominated vectors, see \cite{InvofsparsenonHerm}. 

Below, we state necessary concepts, starting with the definition of compressible and incompressible vectors, see \cite{LOandInvofRM}.

\begin{definition}
Fix $m<n$. The set of $m-$sparse vectors is given by 
$$
\sparse (m):=\left\{ x\in \R^n ||{\rm supp}(x)|\le m \right\}
$$
where $|S|$ denotes the cardinality of a set $S$. Furthermore, for any $\delta>0$, the vectors which are $\delta$-close to $m$-sparse vectors in Euclidean norm, are called $(m,\delta)-$compressible vectors. The set of all such vectors, will be denoted by $\comp(m,\delta)$. Thus
$$
\comp(m,\delta):=\left\{ x\in S^{n-1} | \exists y \in \sparse(m) \ {\rm such} \ {\rm that} \ \V x-y \V_2 \le \delta \right\}.
$$
The vectors in $S^{n-1}$ which are not compressible, are defined to be incompressible, and the set of all incompressible vectors is denoted as $\incomp(m,\delta)$.
\end{definition}

The dominated vectors  are also close to sparse vectors, but in a different sense, see \cite{InvofsparsenonHerm}.

\begin{definition}
For any $x\in S^{n-1}$, let $\pi_x:[n]\rightarrow [n]$ be a permutation which arranges the absolute values of the coordinates of $x$ in non-increasing order. For $1\le m \le m' \le n$, denote by $x_{[m:m']}\in \R^n$ the vector with coordinates
$$
x_{[m:m']}(j) = x(j) 1_{[m:m']} (\pi_x (j)).
$$
In other words, we include in $x_{[m:m']}$ the coordinates of $x$ which take places from $m$ to $m'$ in the non-increasing rearrangement. 
For $\alpha <1$ and $m\le n$ define the set of vectors with dominated tail as follows:
$$
\dom (m, \alpha) := \left\{ x\in S^{n-1} | \left\Vert x_{[m+1:n]} \right\Vert_2 \le \alpha \sqrt{m} \V x_{[m+1:n]} \V_\infty \right\}.
$$
\end{definition}
One may notice that  for $m-$sparse vectors  $x_{[m+1:n]}=0$,  thus we have $\sparse(m)\cap S^{n-1}\subset \dom (m,\alpha)$. 

Theorem \ref{mth} will be proved by first bounding the infimum over compressible and dominated vectors, and then the same for the incompressible vectors. As in \cite{InvofsparsenonHerm}, the first step is to control the infimum for sparse vectors.  To this end, we need some estimates on the small ball probability. For the estimates, recall the definition of Levy concentration function.

\begin{definition}
Let $Z$ be random variable in $\R^n$. For every $\vv >0$, the Levy's concentration function of $Z$ is defined as 
$$
\L(Z,\vv) := \sup_{u\in \R^n} \P \left( \V Z- u\V_2 \le \vv \right),
$$
where $\V \cdot \V_2 $ denotes the Euclidean norm.
\end{definition}
The following Paley-Zygmund inequality is useful on estimating Levy's concerntration function:
\begin{lemma}\label{PZineq}
If $\xi$ is a random variable with finite variance and $0 \le \theta\le 1$, then 
$$
\P (\xi > \theta \E \xi) \ge \frac{(\E \xi- \theta \E \xi)^2}{\E \xi^2}.
$$
\end{lemma}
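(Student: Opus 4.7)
The plan is to prove this by the standard Cauchy--Schwarz trick, which works when $\xi$ is nonnegative (the natural setting for the Paley--Zygmund inequality, and how Lemma \ref{PZineq} will be applied to small ball quantities later). First I would split the expectation according to whether $\xi$ exceeds $\theta \E \xi$:
$$
\E \xi = \E [\xi \cdot \mathbf{1}_{\{\xi \le \theta \E \xi\}}] + \E [\xi \cdot \mathbf{1}_{\{\xi > \theta \E \xi\}}].
$$
On the first event, $\xi \le \theta \E \xi$, so that piece is bounded by $\theta \E \xi$. Rearranging gives the lower bound
$$
(1-\theta)\, \E \xi \;\le\; \E [\xi \cdot \mathbf{1}_{\{\xi > \theta \E \xi\}}].
$$

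Next I would apply the Cauchy--Schwarz inequality to the right-hand side, writing it as an inner product of $\xi$ with the indicator:
$$
\E [\xi \cdot \mathbf{1}_{\{\xi > \theta \E \xi\}}] \;\le\; \bigl(\E \xi^2\bigr)^{1/2} \bigl(\P(\xi > \theta \E \xi)\bigr)^{1/2}.
$$
Squaring the combined inequality then yields
$$
(1-\theta)^2 (\E \xi)^2 \;\le\; \E \xi^2 \cdot \P(\xi > \theta \E \xi),
$$
which rearranges to the claim $\P(\xi > \theta \E \xi) \ge (\E \xi - \theta \E \xi)^2 / \E \xi^2$.

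There is essentially no obstacle here; the only subtlety is making sure the Cauchy--Schwarz step is valid, which it is because finite variance (together with $\xi \ge 0$) ensures $\E \xi^2 < \infty$ and $\E \xi < \infty$. If $\E \xi^2 = 0$ the statement is vacuous (both sides are zero or the inequality degenerates), and if $\E \xi = 0$ the right-hand side is zero so it holds trivially. The whole argument is therefore one display of splitting, one application of Cauchy--Schwarz, and a rearrangement.
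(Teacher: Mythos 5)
Your argument is the standard and correct proof of the Paley--Zygmund inequality; the paper states Lemma~\ref{PZineq} without proof (it is a classical fact), so there is no in-paper argument to compare against. Your version -- splitting $\E\xi$ over the event $\{\xi > \theta\E\xi\}$ and its complement, bounding the complement piece by $\theta\E\xi$, then applying Cauchy--Schwarz to $\E[\xi\,\mathbf{1}_{\{\xi > \theta\E\xi\}}]$ and squaring -- is exactly the canonical derivation. You are right to flag the implicit nonnegativity hypothesis: as literally stated with ``finite variance'' the inequality can fail for signed $\xi$, and what the argument actually needs is $\E\xi \ge 0$ (so that the complement-piece bound $\E[\xi\,\mathbf{1}_{\{\xi \le \theta\E\xi\}}] \le \theta\E\xi$ and the subsequent squaring are legitimate). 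In the paper the lemma is applied only to $|\xi^+|^2$, which is nonnegative, so the gap in the statement is harmless in context, but your caveat is the one substantive observation to make here.
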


\begin{remark}\label{delta0}
{\rm We note that there exist $\delta_0, \vv_0'\in (0,1)$, such that for any $\vv <\vv_0', \L (\xi\delta,\vv )\le 1-\delta_0 p$, where $\xi $ is a random variable with unit variance and finite fourth moment, and $\delta$ is a Ber$(p)$ random variable, independent of each other (for more details see [\cite{Invofsym}, Lemma 3.3]). }
\end{remark}

%Next,  we need to bound the infimum for incompressible vectors. To this end we need the technique in the paper of Vershynin \cite{Invofsym}. More about the techinques for incompressible vectors will be explained in Section \ref{SecIoverInc}, \ref{SecStruc} and \ref{SecSBquad}. 

For application of Levy's concerntration function, the following tensorization lemma can be very useful to transfer bounds for the Levy concentration function from random variables to random vectors.

\begin{lemma}\label{tens}{\rm (Tensorization, Lemma 3.4 in \cite{Invofsym})}. Let $X=(X_1,\cdots,X_n)$ be a random vector in $\R^n$ with independent coordinates $X_k$.

1. Suppose there exists numbers $\vv_0\ge0$ and $L\ge 0$ such that 
$$
\L (X_k,\vv_0) \le L\vv {\rm \ for \ all \ }
\vv \ge \vv_0 {\rm \ and \ all \ } k.
$$
Then 
$$
\L (X,\vv \sqrt{n}) \le (C L\vv)^n {\rm \ for \ all \ }
\vv \ge \vv_0,
$$
where $C$ is an absolute constant .

2. Suppose there exists number $\vv >0$ and $q\in (0,1)$ such that 
$$
\L(X_k,\vv)\le q {\rm \ and \ all \ } k.
$$
There exists numbers $\vv_1 =\vv_1(\vv,q)>0$ and $q_1 =q_1(\vv,q)\in (0,1)$ such that 
$$
\L(X,\vv_1 \sqrt{n})\le q_1^n.
$$
\end{lemma}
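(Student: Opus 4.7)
The plan is to prove both parts by the standard Laplace-transform (Markov) argument, leveraging independence of coordinates to factorize. For any $u\in\R^n$ and $t>0$ write
\[
\P(\V X - u \V_2 \le t) = \P\bigl(e^{-s\V X-u\V_2^2} \ge e^{-st^2}\bigr) \le e^{st^2} \prod_{k=1}^n \E e^{-s(X_k-u_k)^2},
\]
where $s>0$ is a free parameter to be optimized. Each one-dimensional factor is controlled by Levy's function via the layer-cake identity
\[
\E e^{-s(X_k-u_k)^2} = \int_0^\infty 2s\tau\, e^{-s\tau^2}\, \P(|X_k-u_k|\le \tau)\,d\tau \le \int_0^\infty 2s\tau\, e^{-s\tau^2}\, \L(X_k,\tau)\,d\tau.
\]

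For part 1, I would split this integral at $\tau=\vv_0$. On $[0,\vv_0]$ monotonicity of $\L$ in its second argument gives $\L(X_k,\tau)\le \L(X_k,\vv_0)\le L\vv_0$, and on $[\vv_0,\infty)$ the hypothesis gives $\L(X_k,\tau)\le L\tau$. Evaluating the resulting Gaussian integrals yields
\[
\E e^{-s(X_k-u_k)^2} \le CL\bigl(\vv_0 + s^{-1/2}\bigr).
\]
Choosing $s=1/\vv^2$ and $t=\vv\sqrt{n}$ for any $\vv\ge\vv_0$ forces $\vv_0\le \vv = s^{-1/2}$, so the right-hand side is at most $2CL\vv$; multiplying across the $n$ independent factors and noting $e^{st^2}=e^{n}$ gives $\P(\V X-u\V_2\le\vv\sqrt{n})\le (CL\vv)^n$ after absorbing $e$ into the constant. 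Taking supremum over $u$ yields the stated bound.

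For part 2, a cruder split of the expectation at $\tau=\vv$ itself is enough. For each $k$,
\[
\E e^{-s(X_k-u_k)^2} \le \L(X_k,\vv) + e^{-s\vv^2}\bigl(1-\L(X_k,\vv)\bigr) \le q + e^{-s\vv^2}.
\]
Pick $s=s(\vv,q)$ large enough that $q+e^{-s\vv^2}\le q_* := (1+q)/2 <1$, then pick $\vv_1=\vv_1(\vv,q)>0$ small enough that $e^{s\vv_1^2}q_*\le q_1<1$. With $t=\vv_1\sqrt{n}$ the prefactor $e^{st^2}=e^{s\vv_1^2 n}$ combines with the product to give the bound $q_1^n$ uniformly in $u$.

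The only real subtlety is verifying that the regime split at $\tau=\vv_0$ in part 1 is compatible with the optimal scaling $s=1/\vv^2$; the restriction $\vv\ge\vv_0$ is exactly what is needed to make the two error terms $\vv_0$ and $s^{-1/2}$ of the same order, so the computation is routine. This is consistent with the lemma appearing as a standard technical staple (here imported from \cite{Invofsym}) rather than a main result.
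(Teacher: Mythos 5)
Your proof is correct and is essentially the standard exponential-moment (Laplace transform) argument that Vershynin uses to prove this lemma in \cite{Invofsym} (and that Rudelson--Vershynin use in the original tensorization lemma); the paper itself only cites the result without reproving it. The small subtlety in part 2 (writing $e^{-s\vv^2}(1-\L(X_k,\vv))$ for $e^{-s\vv^2}\P(|X_k-u_k|>\vv)$, which is not literally an upper bound since $\L$ is a supremum) is harmless because you immediately relax to $q+e^{-s\vv^2}$, and the rest of the computation---the split of the layer-cake integral at $\vv_0$, the choice $s=1/\vv^2$, and the choice of $\vv_1$ so that $e^{s\vv_1^2}q_*<1$---is routine and matches the source.
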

\begin{remark}\label{tensremark}
{\rm A useful equivalent form of Lemma \ref{tens} (part 1) is the following. Suppose there exist numbers $a,b\ge 0$ such that }
$$
\L (X_k,\vv) \le a\vv+b {\rm \ for \ all \ }
\vv \ge 0 {\rm \ and \ all \ } k.
$$
{\rm Then}
$$
\L (X,\vv \sqrt{n}) \le (C (a\vv+b))^n {\rm \ for \ all \ }
\vv \ge 0,
$$
{\rm Where $C$ is an absolute constant}
\end{remark}

\section{Invertibility over compressible vectors}\label{SecIoverC}

The main theorem in this section is the following:

\begin{theorem}\label{mthcomp}
Consider $A$ satisfies \ref{assump} and $p\ge (1/4)n^{-1/3}$, then there exist $c_{\ref{mthcomp}}$, $c_{\ref{mthcomp}}'$, $c_{\ref{mthcomp}}''$,   $c_{\ref{mthcomp}}'''$, $C_{\ref{mthcomp}}>0$ depending only on $C_{op},M_4$, such that for any $p^{-1}\le M\le c_{\ref{mthcomp}}'''n$, we have for any $u\in \R^n$
\begin{equation}
\begin{array}{rl}
& \P \bigg( \exists x \in {\rm Dom} (M, C_{\ref{mthcomp}}^{-1})\cup \comp(M,c_{\ref{mthcomp}}')  \\
& \V Ax -u \V_2 \le c_{\ref{mthcomp}}'' \sqrt{np} {\rm \ and \ } \V A \V\le C_{op}\sqrt{pn} \bigg) \le \exp(-c_{\ref{mthcomp}}pn).
\end{array}
\end{equation}

\end{theorem}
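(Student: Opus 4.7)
The plan is to run the standard $\vv$-net argument for compressible vectors of \cite{LOandInvofRM, SMstSvRect}, with the row-wise anti-concentration and the net both adapted to sparsity as in \cite{InvofsparsenonHerm}. On the operator-norm event $\{\V A\V\le C_{op}\sqrt{pn}\}$, both $\comp(M, c_{\ref{mthcomp}}')$ and $\dom(M, C_{\ref{mthcomp}}^{-1})$ reduce in a standard way to the class of $2M$-sparse unit vectors: compressibility gives an $\ell_2$-approximation by definition, while the dominated condition $\V x_{[M+1:n]}\V_2 \le C_{\ref{mthcomp}}^{-1}\sqrt{M}\V x_{[M+1:n]}\V_\infty$ shows that $x$ is $\ell_2$-close to its sparse head $x_{[1:M]}$ whenever $\V x_{[M+1:n]}\V_\infty$ is small, the complementary case being close to a strictly sparser vector. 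Choosing a net of $2M$-sparse unit vectors at a fixed constant mesh $\vv \asymp c_{\ref{mthcomp}}''/C_{op}$ keeps the approximation error $\V A(x-x_0)\V_2 \le C_{op}\sqrt{pn}\,\vv$ comfortably below $c_{\ref{mthcomp}}''\sqrt{np}$, and the net itself has cardinality at most $\binom{n}{2M}(C'/\vv)^{2M}\le \exp\bigl(C_1 M\log(n/M)\bigr)$.

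For a fixed sparse $x$ supported on $J$ with $|J|\le 2M$, I would take $I := [n]\setminus J$, whose cardinality is at least $(1-2c_{\ref{mthcomp}}''')n$. Since $I\cap J=\emptyset$, the entries $\{A_{ij}\}_{i\in I,\,j\in J}$ all sit strictly off the diagonal and are therefore i.i.d.\ copies of $\xi\delta$, so the coordinates $((Ax)_i)_{i\in I}$ are mutually \emph{independent}. A Paley--Zygmund estimate (Lemma~\ref{PZineq}) applied to $(Ax)_i^2$, using $\E(Ax)_i^2 = p$ and the fourth-moment bound $\E(Ax)_i^4 \le (3 + M_4^4)p^2$ (which is valid for flat $x$ precisely on the regime $|J|\ge p^{-1}$, where the diagonal term $pM_4^4\V x\V_4^4$ is absorbed into $3p^2$), would deliver the row-wise anti-concentration
$$
\L\bigl((Ax)_i - u_i,\ \vv_0\sqrt{p}\bigr) \le 1-c_0, \qquad i\in I,
$$
\emph{$p$-independently}. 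Tensorizing via Lemma~\ref{tens} yields the pointwise estimate $\P(\V(Ax-u)_I\V_2 \le c_{\ref{mthcomp}}''\sqrt{np})\le e^{-c_1 n}$, and a union bound against the net gives $\exp(C_1 M\log(n/M)-c_1 n) \le \exp(-c_1 n/2) \le \exp(-c_{\ref{mthcomp}}pn)$, provided $c_{\ref{mthcomp}}'''$ is chosen small enough (depending only on $M_4,C_{op}$) that $C_1 c_{\ref{mthcomp}}'''\log(1/c_{\ref{mthcomp}}''')\le c_1/4$ uniformly for $p^{-1}\le M \le c_{\ref{mthcomp}}''' n$.

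The main obstacle is securing the displayed $p$-independent row-wise anti-concentration uniformly over all sparse $x$ and all shifts $u_i$. For ``flat'' sparse $x$ the Paley--Zygmund computation above goes through, but for ``peaky'' $x$ (one or a few dominant coordinates) the moment $\E(Ax)_i^4$ is dominated by the $pM_4^4 \V x\V_4^4$ term, and Paley--Zygmund alone only produces the weaker bound $\L \le 1 - c_0 p$; tensorizing that gives $e^{-c_1 pn}$, too small to overpower the $\binom{n}{2M}$ factor in the net once $M\asymp n$ and $p\ll 1$. The remedy, following Basak--Rudelson (Section 3 of \cite{InvofsparsenonHerm}), is a combinatorial decomposition of $x$ into its flat and peaky parts, with the peaky part absorbed into a strictly sparser vector and dispatched by downward induction on the sparsity level. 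The uniformity in the shift $u_i$ is extracted by a Halasz-type conditioning argument in which Remark~\ref{delta0} is applied to a single surviving Bernoulli term to kill concentration on any fixed point. Combined with the off-diagonal row restriction---which neutralizes the symmetric dependence of $A$ because $|J|\le 2c_{\ref{mthcomp}}''' n$ leaves room for $(1-2c_{\ref{mthcomp}}''')n$ genuinely independent rows---this machinery is what makes the sparse symmetric case tractable under the sole hypothesis $M\ge p^{-1}$.
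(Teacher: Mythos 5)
Your overall plan---reduce to sparse vectors, run a net argument with row-wise anti-concentration, and handle the very-sparse/peaky regime by a separate combinatorial argument \`a la Basak--Rudelson---does match the structure of the paper's proof. However, there is a real gap at the central step.

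\textbf{The Paley--Zygmund step does not give shift-uniform anti-concentration.} You claim that applying Lemma~\ref{PZineq} to $(Ax)_i^2$, together with the moment bounds $\E(Ax)_i^2 = p$ and $\E(Ax)_i^4 \le (3+M_4^4)p^2$, yields the Levy-concentration bound $\L\bigl((Ax)_i - u_i,\ \vv_0\sqrt{p}\bigr) \le 1-c_0$. This is false. Paley--Zygmund gives only $\P\bigl((Ax)_i^2 > \theta p\bigr) \ge c_0$, i.e.\ a lower bound on the probability of escaping a neighborhood of \emph{zero}; it says nothing about a neighborhood of an arbitrary shift $u_i$. A random variable can easily have a large atom at some $u^* \ne 0$ while still satisfying the Paley--Zygmund conclusion at $0$. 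For the fixed target $u$ appearing in the statement of Theorem~\ref{mthcomp} you need genuine Levy concentration, uniformly over $u_i$. The paper obtains this from the Basak--Rudelson small-ball estimate (Corollary~\ref{sbspread}, restating Corollary~3.7 of \cite{InvofsparsenonHerm}), which is an Esseen-type result and not a consequence of Paley--Zygmund. In this paper Paley--Zygmund appears in a different role entirely (Lemma~\ref{tsbd}): it establishes two-sided tail bounds $\min(\P(\xi\le -c),\P(\xi\ge c))\ge c'$ on the entry distribution $\xi$ itself, which is what the combinatorial Lemma~\ref{comblemma} needs to control signs.

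\textbf{The handling of the very sparse regime is too vague to check.} You gesture at a ``combinatorial decomposition... dispatched by downward induction on the sparsity level'' together with a ``Halasz-type conditioning argument applied to a single surviving Bernoulli term.'' The paper's actual base case (Lemma~\ref{domonly}) is not inductive and does not tensorize per-row small-ball bounds: it invokes the combinatorial Lemma~\ref{comblemma}, a single event holding simultaneously over all choices of support $J$, complement $J'$, and sign pattern $s$, which guarantees $\Theta(pn)$ rows each having exactly one aligned non-zero entry in $\mathrm{supp}(x)$. Once that event holds, $\|Ax\|_2 \gtrsim \sqrt{pn}$ is \emph{deterministic}; no $\vv$-net, union bound, or Levy concentration is used at this level. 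The moderately sparse step (Lemma~\ref{modspar}) is then a two-case, not inductive, reduction: either $x$ is close to $(8p)^{-1}$-sparse (handled by the base case), or its middle stretch $x_{[m+1:M]}$ has $\|\cdot\|_\infty/\|\cdot\|_2 \lesssim \sqrt{p}$ and Corollary~\ref{sbspread} plus a net finishes. Your sketch conflates these two arguments; the Halasz-conditioning route might work, but you would need to spell out how the shift-uniform bound survives and why the net cardinality remains affordable at $p$ near the lower endpoint $n^{-1/3}$, where $p^{-1}\log(n/\vv)$ and $p^2 n$ are nearly the same order.
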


\begin{remark}
{\rm Although for the purpose our our proof we do not need to bound the dominated vectors close to moderately sparse, we still work on it due to it's own interest for future work.}
\end{remark}
\begin{remark}
{\rm Theorem \ref{mthcomp} can be extended to the sparsity level of $n^{-1+c}$ for arbitrary $c$ following our framework. The reason we can't not reach $n^{-1+c}$ in Theorem \ref{mth} is due to incompressible part.}
\end{remark}

A direct proof following the paper of Vershynin \cite{Invofsym} won't work due to the sparsity phenomenon found in the sparse paper of Basak and Rudelson, see \cite{InvofsparsenonHerm}. So we need to adapt the technique for sparse matrix and deal with the symmetricity at the same time.  The proof splits into two steps as in \cite{InvofsparsenonHerm}. First, we consider vectors which are close to $(1/8p)$-sparse. The small ball probability estimate is not strong enough for such vectors. This forces us to use the method designed for sparse matrices in \cite{InvofsparsenonHerm}. We prove Lemma \ref{comblemma} which a generalized version of Lemma 3.2 in \cite{InvofsparsenonHerm} for symmetric matrix. Lemma \ref{comblemma} allows us to control $\V Ax \V_2$ for very sparse vectors without cancellation and $\vv$-net argument. For more intuition of the technique for vectors close to very sparse, see Section 3.1 in \cite{InvofsparsenonHerm}. Later, one needs to improve these estimates for vectors which are close to $M$-sparse. For such moderately sparse vectors, a better control of the Levy concentration function is available. After we obtain such estimates for sparse vectors, we extend them to compressible vectors using the standard $\vv$-net  and the union bound argument.

\subsection{Vectors close to very sparse}

Now we state a combinatorial lemma similar to Lemma  3.2 in \cite{InvofsparsenonHerm} but designed for symmetric matrices. The proof is a variant of Lemma  3.2 in \cite{InvofsparsenonHerm} to deal with the symetricity.
\begin{lemma}\label{comblemma}
Consider $A_n$ be an $n\times n$ random matrix with $a_{ij}=\delta_{ij}\xi_{ij}$  for $i\le j$   and $a_{ji} = \pm a_{ij}$ for $i > j $. Here $\delta_{ij}$ are i.i.d. Bernoulli random variables with $\P(\delta_{ij}=1)=p$, where $p\ge C\log n/n$. And $\xi_{ij}$ are independent mean zero  random variables with $\min\{ \P(\xi_{i,j}\ge c_1),\P(\xi_{i,j} \le -c_1) \}\ge c_0$. For $\kappa \in \mathbb{N} $, $s\in \{-1,1\}^{\kappa}$ and for $J,J'\subset[n]$, let $\A_c^{J,J',s}$ denote the event that satisfies the following conditions:

(i) There are at least $c \kappa pn$ rows of the matrix have non-zero entry in the columns corresponding to $J$, and all zero entries in the columns corresponding to $J'$.

(ii) Denote $I^{J,J'}$ be the indices of those $c \kappa pn$ rows. Then $I^{J,J'}\cap (J\cup J')=\emptyset$.

(iii) Suppose $i\in I^{J,J'}$ and $j_i\in J$ is the non-zero entry as in (i), then $| a_{ij_i}|\ge c_1$ and ${\rm sign}(a_{ij_i}) = s_{j_i}$.

Denote 
$$
m=m(\kappa):=\kappa\sqrt{pn}\wedge\frac{1}{8p}.
$$

Then, there exist absolute constants $0<c_{\ref{comblemma}},c_{\ref{comblemma}}'<\infty$ depending only on $c_0,c_1$, such that

$$
\P \left(
\bigcap_{\kappa \le (8p\sqrt{pn})^{-1}\vee 1}
\bigcap_{s\in \{-1,1\}^{\kappa }}
\bigcap_{J\in \binom{[n]}{\kappa} }
\bigcap_{J'\in \binom{[n]}{m}, J\cap J'=\emptyset }
\A_{c_{\ref{comblemma}}'}^{J,J',s}
\right)\ge 1-\exp(-c_{\ref{comblemma}}pn).
$$
\end{lemma}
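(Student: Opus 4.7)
The plan is to adapt the argument of Lemma 3.2 in \cite{InvofsparsenonHerm} to the symmetric setting, and the central observation is that condition (ii) --- namely $I^{J,J'}\cap(J\cup J')=\emptyset$ --- is engineered precisely to kill the dependence introduced by the constraint $a_{ji}=\pm a_{ij}$. Indeed, for any $i\notin J\cup J'$ and any $j\in J\cup J'$, the entry $a_{ij}$ is, up to sign, exactly one independent upper-triangular entry of $A$; and as $(i,j)$ ranges over $\bigl([n]\setminus(J\cup J')\bigr)\times(J\cup J')$, these upper-triangular entries are all distinct. Hence the submatrix $(a_{ij})_{i\notin J\cup J',\,j\in J\cup J'}$ is a genuinely i.i.d.\ sparse rectangular matrix, and the proof should reduce to a Chernoff-plus-union-bound in the spirit of \cite{InvofsparsenonHerm}.

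Fix $(\kappa,s,J,J')$, and for each $i\notin J\cup J'$ let $R_i$ be the indicator that $a_{ij}=0$ for every $j\in J'$ and that there exists $j_i\in J$ with $|a_{ij_i}|\ge c_1$ and ${\rm sign}(a_{ij_i})=s_{j_i}$. By independence across $j\in J\cup J'$,
$$
\E R_i \;\ge\; (1-p)^{m}\bigl(1-(1-c_0 p)^{\kappa}\bigr).
$$
Since $m\le 1/(8p)$, the first factor is bounded below by an absolute constant; and since $\kappa\le(8p\sqrt{pn})^{-1}\vee 1$ forces $\kappa p\lesssim 1$, the second factor is $\gtrsim c_0\kappa p$. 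Thus $\E R_i\gtrsim \kappa p$. Because the $R_i$ are mutually independent by the reduction in the first paragraph, a standard Chernoff estimate applied to $N:=\sum_{i\notin J\cup J'}R_i$, with $\E N\gtrsim \kappa p n$, yields
$$
\P\bigl(N<c'_{\ref{comblemma}}\kappa p n\bigr)\;\le\;\exp(-c\kappa p n)
$$
for suitable positive constants $c'_{\ref{comblemma}},c$ depending only on $c_0,c_1$.

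It remains to union-bound over all choices of $(\kappa,s,J,J')$. Using $\log\binom{n}{k}\le k\log(en/k)$, the log of the number of tuples for a given $\kappa$ is at most $\kappa\log 2+\kappa\log(en/\kappa)+m\log(en/m)$, and with $m=\kappa\sqrt{pn}$ the dominant term is $m\log(en/m)\lesssim \kappa\sqrt{pn}\,\log n$. Measured against the Chernoff exponent $c\kappa p n = c\kappa\sqrt{pn}\cdot\sqrt{pn}$, the union bound closes for each $\kappa$ as soon as $\sqrt{pn}\gtrsim \log n$, which follows from the hypothesis $p\ge C\log n/n$ once $C$ is taken large enough; summing the resulting geometric series in $\kappa$ then delivers the required $1-\exp(-c_{\ref{comblemma}}pn)$. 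The main technical obstacle is the independence claim underpinning the first paragraph: without condition (ii), the symmetric constraint would reintroduce dependencies between pairs of rows and the Chernoff step would collapse. The proof therefore hinges on recognising that (ii) reduces the symmetric problem to a genuinely i.i.d.\ rectangular problem on a complement submatrix, after which the argument is essentially that of \cite{InvofsparsenonHerm}.
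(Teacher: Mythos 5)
Your identification of the role of condition (ii) --- that excluding $J\cup J'$ from the row set $I$ turns the symmetric constraint into a genuinely i.i.d.\ rectangular sub-problem --- is exactly the insight the paper uses, and the Chernoff-plus-union architecture is the right one. But there is a genuine gap in the union bound over $J'$, and it is not merely a technicality: a ``standard Chernoff'' for $N=\sum R_i$ with $\E R_i\gtrsim\kappa p$ gives only the exponent $c\kappa p n$, whereas the union over $J'\in\binom{[n]}{m}$ costs $m\log(en/m)\sim\kappa\sqrt{pn}\log n$. Matching these requires $\sqrt{pn}\gtrsim\log n$, i.e.\ $p\gtrsim(\log n)^2/n$, which is strictly stronger than the hypothesis $p\ge C\log n/n$. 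Your claim that this ``follows from $p\ge C\log n/n$ once $C$ is taken large enough'' is incorrect: an absolute constant $C$ gives only $\sqrt{pn}\ge\sqrt{C\log n}$, which is $o(\log n)$. The paper's proof avoids this by a two-stage decomposition into $I^1(J,J')$ (the ``one good signed entry in $J$'' set) and $I^0(J,J')$ (the ``all zeros in $J'$'' set), which are independent because they are determined by disjoint collections of upper-triangular entries. After first fixing $J$ and conditioning on $|I^1|\gtrsim\kappa p n$, the failure event $i\notin I^0$ for $i\in I^1$ has probability at most $pm$, and the relevant Chernoff bound in the ``threshold far above mean'' regime acquires the crucial extra factor $\log\bigl(1/(4pm)\bigr)$ in the exponent. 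When $\kappa\sqrt{pn}=m$ and $pm$ is polynomially small in $n$, this factor is $\sim\log n$ and is precisely what lets $\binom{n}{m}$ be absorbed at the stated sparsity threshold; your collapsed one-step estimate discards it.

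Two smaller points. First, your $R_i$ requires ``there exists $j_i\in J$ with the right sign and magnitude,'' but the lemma (and its use in Lemma~\ref{domonly}) needs the $J$-columns of row $i$ to have \emph{exactly one} nonzero entry; the paper's $I^1(J,J')$ encodes this by additionally demanding $a_{ij}=0$ for $j\in J\setminus\{j_i\}$, so that no cancellation occurs when $x$ is supported on $J\cup\{k\}$. Your estimate $\E R_i\ge(1-p)^m\bigl(1-(1-c_0p)^\kappa\bigr)$ should be replaced by something like $c_0\kappa p(1-p)^{\kappa-1}(1-p)^m\gtrsim c_0\kappa p$, which is the same order but for the correct event. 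Second, the union bound over $(J,s,\kappa)$ is cheaper --- its log is $\lesssim\kappa\log n$ against exponent $\kappa pn\gtrsim\kappa\,C\log n$ --- so the paper handles it separately after the $J'$-union has been closed at the sharper rate; you should preserve that ordering rather than taking one global union bound.
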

\begin{proof}
The proof is done by bounding the complement event. It is similar to Lemma 3.2 in \cite{InvofsparsenonHerm} but we need to take care of the sign and symmetricity.

Without loss of generality, we assume $c_1 = 1$ and only need to consider $s=(1,\cdots,1)$. For different choice of signs, the argument is identical. Fix $\kappa \le (8p\sqrt{pn})^{-1} \vee 1$ and  $J\in\binom{[n]}{\kappa}$, $J'\in \binom{[n]}{m}$. Let 
\begin{equation}
\begin{array}{rl}
I^1(J,J') := & \Big\{
i\in [n]\backslash(J\cup J'): a_{ij_i} \ge 1 {\rm \ for \ some\ }  j_i\in J,\\
& {\rm \ and \ }
a_{ij_i} = 0 {\rm \ for \ all\ }
j \in J\backslash{j_i}
\Big\}.   
\end{array}
\end{equation}

Similarly, we define
$$
I^0(J,J'):=\left\{
i\in [n]\backslash(J\cup J'): a_{ij}=0 {\rm \ for \ all\ } j\in J'
\right\}.
$$
Here we require $(I^1\cup I^0)\cap (J\cup J') = \emptyset$ so that we can get rid of symmetricity and achieve independence. On the other hand, since $m,\kappa \ll n$, this won't harm our probability bound.

To prove our desired result, we need to show the cardinality of $I^1(J,J')$ is at least $c\kappa pn$ with high probability for some constant $c$ firstly. Then we can apply Chernoff's inequality to prove that $|I'(J,J')\cap I^0(J,J')|$ is large with high probability. Finally, we take union bound over all different choices of $J,J',s$.

We start with obtaining a lower bound on $\P (i\in I^1(J,J'))$ for every $i\in [n]$. By our assumption on $a_{ij}$, we have for any $i\not\in J\cup J'$,
$$
\P(i\in I^1(J,J')) \ge c_0 |J| p(1-p)^{|J|-1} \ge c_0\kappa p (1-\kappa p)\ge \frac{c_0\kappa p}{2}.
$$
Therefore, by Chernoff's inequality and the fact that $\kappa, m \ll n$, we have 
$$
\P (|I^1(J,J')|\le \frac{c_0 \kappa p n}{4}) \le \exp(-c_1pn).
$$
Next, we fix a set $J'\in \binom{[n]}{m}$, for any $i\in [n]\backslash{(J\cup J')}$, we have that
$$
\P(i\in I^0(J,J'))= (1-p)^{|J'|} \ge 1- p|J'|=1-pm\ge \frac{3}{4}.
$$
Thus, for any given $I\subset[n]$, the random variable $I\backslash I^0(J,J')$ can be represented as the sum of independent Bernoulli variables taking value 1 with probability greater than $pm$. Also, note that 
$$
\E |I\backslash I^0(J,J')| \le pm |I| \le \frac{|I|}{4}
$$
by the assumption on $\kappa $ and $m$. Now, use Chernoff's inequality again, we have
$$
\P\left(|I\backslash I^0(J,J')| \ge \frac{|I|}{2}\right) \le \exp\left( -\frac{|I|}{16}\log\left(\frac{1}{4pm} \right) \right).
$$
So for any $I\subset[n]$ such that $|I|\ge \frac{c_0\kappa n p}{4}$, we can deduce that for any $J\in\binom{[n]}{\kappa}$,
\begin{equation}
\begin{array}{rl}
& \P\left(
\displaystyle\exists J'\in\binom{[n]}{m} {\rm \ such \ that \ } 
|I^0(J,J')\cap I| \le \frac{c_0\kappa pn }{8}
\right)\\
\le & \displaystyle\sum_{J'\in\binom{[n]}{m}} \P(|I\backslash I^0(J,J')| \ge \frac{1}{2}|I|)\\
\le & \displaystyle\binom{n}{m} \exp\left( -\frac{|I|}{16} \log\left( \frac{1}{4pm} \right) \right)\le \exp(-\kappa p n U),
\end{array}
\end{equation}
where
$$
U:= \frac{c_0}{64} \log \left( \frac{1}{4pm} \right)
-\frac{m}{\kappa pn} \log\left( \frac{en}{m} \right).
$$
Here we have $U\ge \frac{c_0}{100}$ (lower bound of $U$ is a direct computation which was done in proof of Lemma 3.2 in \cite{InvofsparsenonHerm} so we omit details here).
Now for any $J\in \binom{[n]}{\kappa}$, define
\begin{equation}
\begin{array}{rl}
p_J& :=\P \bigg(
 J'\in \binom{[n]}{m} {\rm \ such \ that \ } J'\cap J=\emptyset, \\
& |I'(J,J')\cap I^0(J,J')| <\frac{c_0 \kappa pn }{8}
\bigg).
\end{array}
\end{equation}

As $J,J'$ are disjoint, we have independence between random subsets $I^1(J,J')$ and $I^0(J,J')$. Thus
\begin{equation}
\begin{array}{rl}
     p_J & \le \displaystyle\sum_{I\subset [n],\ |I|\le \frac{c_0}{4}\kappa p n} \P(I^1(J,J') = I) \\
     & +   \displaystyle\sum_{I\subset [n],\ |I|> \frac{c_0}{4}\kappa p n} \P(I^1(J,J') = I)  
     \P \bigg( 
     \exists J'\in \binom{[n]}{m}\\
     & {\rm \ such\ that\ }|I^0(J,J')\cap I| \le \frac{c_0}{8}\kappa pn
     \bigg)\\
     & \le \exp(-c_1\kappa pn) + \exp(-c_2\kappa pn) \le \exp(-c_3\kappa pn).
\end{array}
\end{equation}

To finish the proof, we only need to take union bound over all different choices of $J$,  $s$ and $\kappa$. Set $c_{\ref{comblemma}}' = c_0 /8$. We have
$$
\P \left(
\bigcup_{s\in \{-1,1\}^{ \kappa }}
\bigcup_{J\in \binom{[n]}{\kappa} }
\bigcup_{J'\in \binom{[n]}{m}, J\cap J'=\emptyset }
\Big(\A_{c_{\ref{comblemma}}'}^{J,J',s}\Big)^c
\right)\le 2^{ \kappa}\binom{n}{\kappa}\exp(-c_3\kappa pn).
$$
Notice that the probability bound $\exp(-c_3\kappa p n)$ dominate $2^{\kappa}\binom{n}{\kappa}$ for $C$ large enough in $p\ge \frac{C\log n}{n}$, we have the above probability is bound by $\exp(-c_3\kappa pn/2)$. Finally take another union bound over $\kappa$ with finish our proof.

\end{proof}

Notice that to apply Lemma \ref{comblemma}, we need a two side tail probability estimate of a random variable with mean zero, variance 1 and bounded fourth moment. The following lemma although simple may have its own interest in some applications.

\begin{lemma}\label{tsbd}
Let $\xi $ be a random variable with mean zero, unit variance, and finite fourth moment $M_4^4$. Then there exist constant $c_{\ref{tsbd}},c'_{\ref{tsbd}}>0$ depending only on $M_4$ such that, 
$$
\min(\P(\xi \le -c_{\ref{tsbd}} ),\P(\xi \ge  c_{\ref{tsbd}} )) \ge c'_{\ref{tsbd}}.
$$
\end{lemma}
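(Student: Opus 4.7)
The plan is to apply the Paley--Zygmund inequality (Lemma \ref{PZineq}) twice. First, I would use it on the non-negative variable $\xi^2$ to extract a lower bound on $\E|\xi|$ purely in terms of $M_4$; then, since $\E\xi = 0$ forces the positive and negative parts $\xi_\pm := \max(\pm \xi, 0)$ to have equal means, I would apply Paley--Zygmund a second time to $\xi_+$ (and to $\xi_-$) to convert this mean bound into the desired two-sided tail estimate.

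For the first step, applying Lemma \ref{PZineq} to $\xi^2$ with $\theta = 1/2$, and using $\E\xi^2 = 1$ together with $\E\xi^4 \le M_4^4$, gives
$$
\P\bigl(|\xi| > 1/\sqrt{2}\bigr) \ge \frac{1}{4 M_4^4}.
$$
This immediately yields $\E|\xi| \ge (1/\sqrt{2}) \cdot \P(|\xi| > 1/\sqrt{2}) \ge 1/(4\sqrt{2}\, M_4^4)$.

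For the second step, the identity $\E\xi = \E\xi_+ - \E\xi_- = 0$ combined with $\E|\xi| = \E\xi_+ + \E\xi_-$ gives $\E\xi_+ = \E\xi_- \ge \mu$, where $\mu := 1/(8\sqrt{2}\, M_4^4)$. Since $\E \xi_+^2 \le \E\xi^2 = 1$, another application of Lemma \ref{PZineq} to $\xi_+$ with $\theta = 1/2$ yields
$$
\P\bigl(\xi \ge \mu/2\bigr) \ge \P\bigl(\xi_+ > \tfrac{1}{2}\E\xi_+\bigr) \ge \frac{(\E \xi_+)^2}{4 \E\xi_+^2} \ge \frac{\mu^2}{4},
$$
where the first inequality uses $\tfrac{1}{2}\E\xi_+ \ge \mu/2$. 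The same argument applied to $-\xi$ (which satisfies the same hypotheses) gives the matching lower tail $\P(\xi \le -\mu/2) \ge \mu^2/4$. Taking $c_{\ref{tsbd}} := \mu/2$ and $c'_{\ref{tsbd}} := \mu^2/4$, both explicit functions of $M_4$, finishes the proof.

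There is no serious obstacle here. The conceptual content is that the fourth-moment control prevents $\xi^2$ from concentrating at $0$ (a unit-variance but heavy-tailed variable could have nearly all its mass near zero), and the mean-zero condition then forces the positive and negative excursions to balance, so a lower bound on one tail propagates to the other. The only mild care required is to keep the constants explicit so that the dependence on $M_4$ is transparent for use in Lemma \ref{comblemma}.
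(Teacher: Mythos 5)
Your proof is correct, and it takes a slightly different path from the paper's. Both arguments rest on the same two ideas --- split $\xi$ into positive and negative parts and use $\E\xi=0$ to equate their first moments, then apply Paley--Zygmund to convert moment information into a tail bound --- but the way you obtain the needed lower bound on a moment of $\xi_+$ differs. The paper sets $a := \E(\xi^+)^2$, uses Cauchy--Schwarz and $\E\xi=0$ to get $\E(\xi^-)^2 = 1-a$ and $\E|\xi^-|\le a^{1/2}$, and then runs a H\"older interpolation $1-a \le (\E|\xi^-|)^{2/3}(\E|\xi^-|^4)^{1/3} \le a^{1/3}M_4^{4/3}$ to force $a \ge c(M_4)$; only then does it apply Paley--Zygmund once, to $(\xi^+)^2$, using the fourth moment in the denominator. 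You instead apply Paley--Zygmund first to $\xi^2$ to lower bound $\E|\xi|$, split via $\E\xi_+ = \E\xi_- = \tfrac12\E|\xi| \ge \mu$, and apply Paley--Zygmund a second time to $\xi_+$, where you only need $\E\xi_+^2 \le 1$ rather than a fourth-moment control. Your route is more elementary --- it avoids the H\"older step and the implicit solve for $a$ --- and it produces fully explicit constants $c_{\ref{tsbd}} = 1/(16\sqrt{2}M_4^4)$ and $c'_{\ref{tsbd}} = 1/(512 M_4^8)$, at the cost of a somewhat worse polynomial dependence on $M_4$ than the paper's argument would yield if carried out explicitly. Both are adequate for the downstream use in Lemma \ref{comblemma}, where only existence of such constants matters.
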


\begin{proof}
This lemma is a two-sided version of lemma 3.2 in \cite{SMstSvRect}. We derive a lower bound for second moment of positive and negative part separately and then use Paley-Zymmund inequality.

Let $\xi^+(t) = 1_{t>0}(t)\xi(t),\ \xi^-(t) = 1_{t<0}(t)\xi(t)$ be the positive and negative part of $\xi$. Suppose $\E(
\xi^+)^2 = a$. Then by Cauchy-Schwartz inequality, we have $\E \xi^+ \le a^{1/2}$. By $\E \xi =0$ and $\E \xi^2 = 1$, we have 
$$
\E |\xi^-| = \E \xi^+ \le a^{1/2}, \E (\xi^-)^2 = 1-a.
$$
Apply H\"{o}lder's inequality and $\E|\xi|^4= M_4^4$, we have
$$
1- a = \E (\xi^-)^2 = \E |\xi^-|^{2/3} |\xi^-|^{4/3}
\le (\E |\xi^-|)^{2/3} (\E |\xi^-|^4)^{1/3} \le a^{1/3} M_4^{4/3}.
$$
Thus $a$ is lower bounded by some constants $c$ depending only on $M_4$. 
Apply Paley-Zygmund inequality we have 
$$
\P \left(\xi^+ \ge \sqrt{\frac{c}{2}} \right)  
= \P \left(|\xi^+|^2 \ge  \frac{c}{2} \right) \ge \frac{(\E |\xi^+|^2 - c/2)^2}{M_4^4} \ge \frac{c^2}{4M_4^4}.
$$
The Lemma is proved by repeating the same argument for positive part.
\end{proof}

We now use the above Lemma \ref{comblemma} to establish a uniform small ball probability bound for the set of dominated vectors. Without loss of generality, we many assume that  $1/(8p)>1$. For $p\ge 1/8$, we only need to apply result on dense matrix (see \cite{Invofsym}) to prove our main theorem.

\begin{lemma}\label{domonly}
Consider $A$ satisfies \ref{assump} and $p\ge (1/4)n^{-1/3}$. For any $u\in \R^n$, there exist $c_{\ref{domonly}},c_{\ref{domonly}}',c_{\ref{domonly}}''$ depending only on $C_{op},M_4$, such that
\begin{equation}
\begin{array}{rl}
& \P\Big( \exists x \in \dom ( (8p)^{-1}, c_{\ref{domonly}}')
{\rm \ such \ that \ }\\
& \V Ax -u \V_2 \le c_{\ref{domonly}}'' \sqrt{np} {\rm \ and \ } \V A \V\le C_{op}\sqrt{pn} \Big)\\
\le & \exp(-c_{\ref{domonly}}pn).
\end{array}
\end{equation}

\end{lemma}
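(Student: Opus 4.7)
The plan is to extend the very-sparse-vector approach of Section~3 of \cite{InvofsparsenonHerm} to the symmetric setting, using Lemma \ref{comblemma} (applicable with constants $c_0, c_1$ furnished by Lemma \ref{tsbd}) as the main combinatorial input. The argument proceeds in three stages: reduction of dominated vectors to structured sparse vectors on a net; pointwise lower bounds on $(Ay)_i$ over many rows from Lemma \ref{comblemma}; and an anticoncentration step for $\V Ay - u \V_2$.

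For $x \in \dom((8p)^{-1}, c_{\ref{domonly}}')$ with $m := (8p)^{-1}$, decompose $x = y + z$ where $y$ is the restriction of $x$ to its top $m$ coordinates in absolute value. The dominated condition together with $\V x \V_2 = 1$ forces $\V z \V_2 \le C_0 c_{\ref{domonly}}'$, so on the event $\V A \V \le C_{op}\sqrt{pn}$ we have $\V Az \V_2 \le C_0 C_{op} c_{\ref{domonly}}' \sqrt{pn}$, which is negligible after shrinking $c_{\ref{domonly}}'$; this reduces the problem to lower-bounding $\V Ay - u \V_2$ for sparse $y \in S^{n-1}$ with $|\mathrm{supp}(y)| \le m$. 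A dyadic pigeonhole on coordinate magnitudes further lets us assume $y$ is supported on some $J$ with $|J| = \kappa \le m$ and $|y_j| \asymp 1/\sqrt{\kappa}$ on $J$. Taking an $\vv$-net for each $(\kappa, J, s = (\mathrm{sign}\,y_j)_{j \in J})$ gives total net size at most $\sum_{\kappa \le m}\binom{n}{\kappa}2^\kappa (C/\vv)^\kappa = \exp(O(m \log n)) = \exp(O(n^{1/3} \log n))$, since $p \ge n^{-1/3}/4$ gives $m \le n^{1/3}/2$.

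For a fixed net vector $y$ with support $J$ and signs $s$, apply Lemma \ref{comblemma} with parameters $(\kappa, J, \emptyset, s)$ and $(\kappa, J, \emptyset, -s)$. Outside an event of probability $2\exp(-c_{\ref{comblemma}} pn)$, this yields disjoint row sets $I_+, I_- \subset [n] \setminus J$ of sizes $\ge c_{\ref{comblemma}}' \kappa pn$ satisfying, for $i \in I_\pm$, that $a_{ij_i}$ has sign $\pm s_{j_i}$ with $|a_{ij_i}| \ge c_1$ and $a_{ij} = 0$ for $j \in J \setminus \{j_i\}$. Since $y$ is supported on $J$, we obtain the pointwise identity $(Ay)_i = a_{ij_i} y_{j_i}$ of prescribed sign and magnitude $\ge c_1 / \sqrt{\kappa}$. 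A sign-based case analysis (of $u_i$ against $\pm c_1 / \sqrt{\kappa}$ on $I_\pm$) then yields the deterministic inequality $\V Ay - u \V_2^2 + \V u \V_2^2 \ge c_2 pn$, which combined with the operator norm bound (forcing $\V u \V_2 \le (C_{op}+1)\sqrt{pn}$ when $\V Ax - u \V_2$ is small) and an anticoncentration estimate for the contribution from rows $i \notin I_+ \cup I_-$ (using Remark \ref{delta0} together with Lemma \ref{tens}) yields $\P(\V Ay - u \V_2 \le c''\sqrt{pn}) \le \exp(-c pn)$ per fixed $u$ and $y$. The union bound over the net of size $\exp(O(n^{1/3} \log n))$ then succeeds because $pn \ge n^{2/3}$ dominates $n^{1/3} \log n$.

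The main obstacle is that the combinatorial set $I^{J,\emptyset}$ is itself a function of $A$, so the independence required for Lemma \ref{tens} must be restored by stratifying over candidate realizations $(I_+, I_-, (j_i))$ at a union-bound cost $\exp(O(\kappa pn \log n))$; this is dominated by the anticoncentration gain only after a careful choice of scale in the tensorization bound. A further subtlety appears when the conditional distribution of $\xi$ on $\{s\xi \ge c_1\}$ is nearly atomic (as in $\xi = \pm 1$): the per-row Levy concentration of the conditioned entries degrades, and the small-ball probability must be recovered from an alternative source---either refining Lemma \ref{comblemma} to a two-sided magnitude window $|a_{ij_i}| \in [c_1, c_2]$ on which $\xi$ has non-degenerate conditional variance, or exploiting the randomness of which rows fall into $I_+ \cup I_-$---following the analogous devices in \cite{InvofsparsenonHerm, Invofsym}.
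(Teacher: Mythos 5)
Your proposal misapplies Lemma~\ref{comblemma} in a way that breaks the argument, and it misses the deterministic structure that makes the paper's proof work without any net or anticoncentration step.

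The decisive issue is the range of $\kappa$. Lemma~\ref{comblemma} only gives the uniform event over $\kappa \le (8p\sqrt{pn})^{-1} \vee 1$, and for $p \ge (1/4)n^{-1/3}$ one has $8p\sqrt{pn} = 8p^{3/2}\sqrt{n} \ge 1$, so the lemma is available only for $\kappa = 1$. You propose applying it with $J = \mathrm{supp}(y)$ and $\kappa = |J|$ up to $m = (8p)^{-1} \sim n^{1/3}$, which the lemma does not cover in this regime (indeed, taking $\kappa$ comparable to $m$ is the device one would want for much sparser $p$, not for $p \ge (1/4)n^{-1/3}$). Relatedly, you take $J' = \emptyset$, but the lemma's event is indexed by $J' \in \binom{[n]}{m}$; the role of $J'$ is precisely to kill the other coordinates in $\mathrm{supp}(x)$, which you instead try to recreate via the ``$a_{ij} = 0$ for $j \in J\setminus\{j_i\}$'' clause at a scale ($\kappa = |J|$) where the lemma no longer applies.

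The paper instead applies Lemma~\ref{comblemma} with $\kappa = 1$: for each $k$ in the (at most $m$-element) support one sets $J_k = \{k\}$ and $J'_k = \mathrm{supp}(x)\setminus\{k\}$, so that on the event $\A$ there are $\gtrsim pn$ rows $i \in I_k$ with $(Ax)_i = a_{ik}x_k$, $|a_{ik}| \ge c_{\ref{tsbd}}$, and prescribed sign; the sets $I_k$ are automatically disjoint across $k$. After the reduction $A \mapsto -\mathrm{diag}(\mathrm{sign}(u))A$, $u \mapsto -\mathrm{diag}(\mathrm{sign}(u))u$ (which preserves the hypotheses of Lemma~\ref{comblemma} thanks to the $a_{ji} = \pm a_{ij}$ flexibility), every coordinate of $u$ is non-positive, and choosing the sign parameter $v = \mathrm{sign}(x_k)$ forces $(Ax)_i > 0 > u_i$ on $I_k$, hence $|(Ax - u)_i| \ge |(Ax)_i| \ge c_{\ref{tsbd}}|x_k|$. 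Summing over $k$ and $i \in I_k$ gives $\V Ax - u\V_2^2 \gtrsim pn \V x\V_2^2$ \emph{deterministically} on $\A$, uniformly over all sparse $x$ (because the event $\A$ is uniform over $J, J'$), and the extension to $\dom((8p)^{-1}, c')$ is a two-line triangle-inequality argument using $\V A\V \le C_{op}\sqrt{pn}$. No $\vv$-net, no dyadic pigeonhole, no tensorization, and no small-ball estimate are used; consequently, the dependence-of-$I$-on-$A$ and atomic-$\xi$ issues you raise in your last paragraph simply do not arise. Your worries about those points are, in effect, a signal that you are on a harder path than necessary: once the combinatorial event is uniform over supports and the sign of $u$ is normalized, the lower bound is pointwise in $\omega$, not probabilistic.
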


\begin{proof}[Proof of Lemma \ref{domonly}]
Our proof is similar to Lemma 3.3 in \cite{InvofsparsenonHerm}. The major difference is how to deal with the symmetricity.  We start with proving the result for $\sparse((8p)^{-1})$ vectors of unit length. Then we can prove that these estimates can be easily extended to the dominated vectors. The proof strategy for sparse vectors may depends on $p$ (see Lemma 3.3 in \cite{InvofsparsenonHerm}), but for our purpose, we only need to prove it for $p\ge (1/4)n^{-1/3}$.

Since $p\ge (1/4)n^{-1/3}$, we apply the combinatorial Lemma \ref{comblemma} with $\kappa=1$ and $m=\frac{1}{8p}$. Assuming that the event described in this lemma occurs, we split the vector into blocks with disjoint support. One of these blocks has a large $l_2-$norm. By Lemma \ref{comblemma}, a large number of rows of the matrix has only one non-zero entry in the columns corresponding to the support of this block. This will be sufficient for us to conclude that $\V A x -u \V_2$ is bounded from below for $x\in \sparse((8p)^{-1})$. Note that to get the small ball probability estimate, we also need $\min(\P(\xi \le -c ),\P(\xi \ge  c))  \ge c'$. This is guaranteed by Lemma \ref{tsbd}. 

With out loss of generality, we only need to work on  ${\rm sign}(u) = \{-1\}_{i=1}^n$. For general cases, we only need to work on $A' = -{\rm diag}({{\rm sign}(u)}) A$ and $u' = -{\rm diag}({{\rm sign}(u)}) u$ where $A'$ still satisfies condition of Lemma \ref{comblemma}.
For $k\in[n]$, set $J_k=\{k\}$ and $J_l'={\rm supp}(x) \backslash J_k$. Let $\A$ be the event that for each $k\in [n]$, $v \in\{-1,1\} $ there exists a set $I_k \subset [n]$ of rows such that $|I_k|=c_{\ref{comblemma}'} pn$, and for any $i\in I_k$, $a_{ik}v \ge c_{\ref{tsbd}} $ and $a_{ij}=0$ for $j\in {\rm supp}(x)\backslash {k}$, and {\rm supp}$(x)$ is non-intersect with $I_k$. The definition of the sets $I_k$ immediately implies that $I_k \cap I_{k'}=\emptyset$ for $k\neq k' \in {\rm supp}(x)$. By Lemma \ref{comblemma} and Lemma \ref{tsbd}, $\P (\A) \ge 1- \exp (-c_{\ref{comblemma}}pn)$ where $c_{\ref{comblemma}}$ depend only on $M_4$. This shows that condition on this large probability event $\A$, we have 
$$
\left\Vert
A x -u 
\right\Vert_2^2 
\ge 
\left\Vert
A x 
\right\Vert_2^2
\ge 
\sum_{k\in {\rm supp}(x)} \sum_{i\in I_k } |(Ax)_i|^2 \ge \sum_{k\in {\rm supp}(x)}
c_{\ref{comblemma}}'pn c_{\ref{tsbd}}^2 |x(k)|^2.
$$
Thus $\left\Vert A x -u \right\Vert_2 \ge c_1 \sqrt{pn}$ where $c_1$ depend only on $c_{op},M_4$. So we get the result proved for sparse vectors. This estimate can be automatically extended to the set $\dom\left( (8p)^{-1}, c_{\ref{domonly}}' \right)$, provided that the constant $c_{\ref{domonly}}'$ is small enough. Indeed, assume that 
\begin{equation}\label{Ac}
\begin{array}{rl}
\V Ax - u \V_2 < \frac{c_1}{2}\sqrt{ pn}
\end{array}
\end{equation}
for some $x\in \dom\left( (8p)^{-1}, c_{\ref{domonly}}' \right)$. Set $m=(8p)^{-1}$, it is easy to notice that $\V x_{[m+1:n]} \V_{\infty} \le m^{-1/2}$. Hence,
$$
\V x_{[m+1:n]}\V_2 \le c_{\ref{domonly}}' \sqrt{m} \V x_{[m+1:n]} \V_\infty \le c_{\ref{domonly}}',
$$
and therefore
\begin{equation}
\begin{array}{rl}
\V Ax_{[1:m]}\V_2 &\le \V Ax \V_2 +\V A \V \V x_{[m+1:n]} \V_2 \\
     & \displaystyle\le \frac{1}{2}\sqrt{c_1 pn} + C_{op} \sqrt{pn} c_{\ref{domonly}}' \le \frac{3}{4}\sqrt{c_1 pn}
\end{array}
\end{equation}
provide $c_{\ref{domonly}}'$ small enough. Furthermore, 
\begin{equation}
\begin{array}{rl}
\displaystyle\Big| \left\Vert A(x_{[1:m]}/\V x_{[1:m]} \V_2) \right\Vert_2
-\V Ax_{[1:m]}\V
\Big|     &\le C_{op} \left| 1- \V x_{[1:m]} \V_2 \right|  \\
     & \le \frac{1}{4}\sqrt{c_1 pn}.
\end{array}
\end{equation}

Since $x_{[1:m]}/\V x_{[1:m]} \V_2 \in \sparse((8p)^{-1})\cap S^{n-1}$, combining the above steps we note equality (\ref{Ac}) holds only in $\A^c$. Therefore, we proved the lemma with $c_{\ref{domonly}}=c_{\ref{comblemma}}$ and $c_{\ref{domonly}}''=c_1$.

\end{proof}

Similar to dominated vectors, we can extend the result of Lemma \ref{domonly} to compressible vectors. This step is simply an  approximation. Recall that $\sparse((8p)^{-1})\cap S^{n-1} \subset \dom ((8p)^{-1}, c)$ for any $c$.

\begin{lemma}\label{compinclu}
Consider $A$ satisfies \ref{assump} and $p\ge (1/4)n^{-1/3}$. For any $u\in \R^n$, there exist $c_{\ref{compinclu}},c_{\ref{compinclu}}',c_{\ref{compinclu}}''$ depending only on $C_{op},M_4$, such that
\begin{equation}
\begin{array}{rl}
& \P\Big( \exists x \in \comp ( (8p)^{-1}, c_{\ref{compinclu}}')
{\rm \ such \ that \ }\\
& \V Ax -u \V_2 \le c_{\ref{compinclu}}'' \sqrt{np} {\rm \ and \ } \V A \V\le C_{op}\sqrt{pn} \Big)\\
\le & \exp(-c_{\ref{compinclu}}pn).
\end{array}
\end{equation}

\end{lemma}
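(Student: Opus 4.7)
The plan is to reduce compressibility to the sparse (and hence dominated) case already handled in Lemma \ref{domonly} by an approximation argument. Fix $u \in \R^n$. We apply Lemma \ref{domonly} with the same vector $u$ to obtain positive constants $c_{\ref{domonly}}, c_{\ref{domonly}}', c_{\ref{domonly}}''$ such that, outside an event of probability at most $\exp(-c_{\ref{domonly}}pn)$, on $\e_{op}$ we have $\V Ay' - u \V_2 > c_{\ref{domonly}}'' \sqrt{pn}$ for every $y' \in \dom((8p)^{-1}, c_{\ref{domonly}}')$. We will show that, on this same event, no $x \in \comp((8p)^{-1}, c_{\ref{compinclu}}')$ can satisfy $\V Ax - u \V_2 \le c_{\ref{compinclu}}'' \sqrt{pn}$, provided $c_{\ref{compinclu}}'$ and $c_{\ref{compinclu}}''$ are chosen small enough in terms of $c_{\ref{domonly}}'', c_{\ref{domonly}}'$ and $C_{op}$.

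Suppose, toward a contradiction, that $x \in \comp((8p)^{-1}, c_{\ref{compinclu}}')$ and $\V Ax - u \V_2 \le c_{\ref{compinclu}}'' \sqrt{pn}$. By definition of compressibility, there is $y \in \sparse((8p)^{-1})$ with $\V x - y \V_2 \le c_{\ref{compinclu}}'$. Since $\V x \V_2 = 1$, this gives $|\V y \V_2 - 1| \le c_{\ref{compinclu}}'$, so in particular $\V y \V_2 \ge 1/2$ when $c_{\ref{compinclu}}' \le 1/2$. Set $y' := y / \V y \V_2$. Then $y' \in \sparse((8p)^{-1}) \cap S^{n-1} \subset \dom((8p)^{-1}, c_{\ref{domonly}}')$ (the containment is trivial since $y'_{[m+1:n]} = 0$). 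Moreover,
\[
\V y - y' \V_2 = \big| 1 - \V y \V_2 \big| \le c_{\ref{compinclu}}',
\qquad
\V x - y' \V_2 \le \V x - y \V_2 + \V y - y' \V_2 \le 2 c_{\ref{compinclu}}'.
\]

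On $\e_{op}$ we have $\V A \V \le C_{op} \sqrt{pn}$, hence by the triangle inequality
\[
\V A y' - u \V_2 \le \V A x - u \V_2 + \V A (x - y') \V_2 \le c_{\ref{compinclu}}'' \sqrt{pn} + 2 C_{op} c_{\ref{compinclu}}' \sqrt{pn}.
\]
Choosing $c_{\ref{compinclu}}' := \min\{ c_{\ref{domonly}}'/2,\ c_{\ref{domonly}}''/(8 C_{op}),\ 1/2 \}$ and $c_{\ref{compinclu}}'' := c_{\ref{domonly}}''/2$, the right-hand side is at most $\tfrac{3}{4} c_{\ref{domonly}}'' \sqrt{pn} < c_{\ref{domonly}}'' \sqrt{pn}$, contradicting the conclusion of Lemma \ref{domonly} applied to $y' \in \dom((8p)^{-1}, c_{\ref{domonly}}')$. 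Setting $c_{\ref{compinclu}} := c_{\ref{domonly}}$ completes the argument.

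The main (indeed, only) obstacle is purely bookkeeping: one must choose the compressibility radius $c_{\ref{compinclu}}'$ small enough that a perturbation of size $c_{\ref{compinclu}}' \V A \V \lesssim c_{\ref{compinclu}}' C_{op} \sqrt{pn}$ is dominated by the lower bound $c_{\ref{domonly}}'' \sqrt{pn}$ from the dominated-vector estimate. There is no small-ball or net argument to redo here; the probability cost is exactly the same as in Lemma \ref{domonly}.
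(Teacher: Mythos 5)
Your proof is correct and follows essentially the same route as the paper's: both arguments condition on the good event from Lemma \ref{domonly}, approximate a compressible vector by a (renormalized) sparse vector, and absorb the perturbation $\V A \V \cdot \V x - y'\V_2 \lesssim C_{op} c_{\ref{compinclu}}' \sqrt{pn}$ into the lower bound $c_{\ref{domonly}}'' \sqrt{pn}$ by choosing $c_{\ref{compinclu}}'$ and $c_{\ref{compinclu}}''$ small. The only cosmetic difference is that you phrase it as a contradiction while the paper directly establishes the lower bound on $\V A\bar x - u\V_2$; your bookkeeping of constants is explicit and checks out.
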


\begin{proof}
We first denote following set
\begin{equation}
\begin{array}{rl}
\Omega:= & \Big\{ 
\forall x\in \sparse(1/(8p))\cap S^{n-1},\ \V Ax - u\V_2 \ge c_{\ref{domonly}}\sqrt{pn} \\ 
& {\rm and }
\ \V A \V \le C_{op} \sqrt{pn}
\Big\}.
\end{array} 
\end{equation}
Then on $\Omega$, for any  $\bar{x}\in\comp((8p)^{-1},c_{\ref{compinclu}}')$, we can find $x\in \sparse(1/(8p))$ such that 
$$
\V Ax/\V x\V_2 -u \V_2 \ge c_{\ref{domonly}}\sqrt{ pn} {\rm \ and\ }\V x- \bar{x} \V_2 \le c_{\ref{compinclu}}'.
$$
This also implies $|1-\V x \V_2 |\le c_{\ref{compinclu}}'$. Therefore
\begin{equation}
\begin{array}{rl}
\V A \bar{x} -u \V_2  & \ge \V A x/\V x\V_2  -u \V_2 - \V A \V \displaystyle\left\Vert x- \frac{x}{\V x\V_2 } \right\Vert  - \V A\V \V x-\bar{x} \V_2 \\
     & \ge c_{\ref{compinclu}}'' \sqrt{pn}
\end{array}
\end{equation}
by choosing $c_{\ref{compinclu}}'$ small enough. Since by Lemma \ref{domonly}, $\P (\Omega)\ge 1-\exp(-c_{\ref{domonly}}pn)$, the result follows.
\end{proof}

\subsection{Vectors very close to moderately sparse}

Lemma \ref{domonly} provided uniform lower bound on $\V A x \V $ for vectors which are close to very sparse vectors. To prove Theorem \ref{mthcomp}, we need to uplift these estimates for vectors which are less sparse (see Section 3.2 in \cite{InvofsparsenonHerm}). These vectors are well spread ones which allows us to obtain a strong small ball probability estimate so that we can use the standard net argument. The argument is a modification of proof of Lemma 3.8 in \cite{InvofsparsenonHerm}.

%First, let's recall and rephrase the Lemma 3.5 and 3.6 in \cite{InvofsparsenonHerm}.
%\begin{lemma}\label{modsparL1}
%Assume that $X $ be $m$-dimensional random vector with i.i.d. entries having the same distribution as $a_{ij}$. Then there exists $c$ depending only on $M_4$ such that for any $x\in \R^m$
%$$
%\L (\langle X,x \rangle , \frac{1}{4} \sqrt{p} \V x\V_2) \le 1-\frac{c_{\ref{modsparL1}}p}{\V x\V_{\infty}^2/ \V x\V_2^2+p}.
%$$
%\end{lemma}

%\begin{lemma}\label{modsparL2}
%Let $V_1,\cdots,V_n$ be non-negative independent random variables such that $\P(V_i >1)\ge q$, for all $i\in [n]$, and for some $q\in(0,1/2)$. Then there exist constants $0< c_{\ref{modsparL2}},c_{\ref{modsparL2}}'<\infty$, such that
%$$
%\P \left(  \sum_{j=1}^n V_j \le \frac{c_{\ref{modsparL2}}qn}{\log(1/q)} \right) \le \exp(-c_{\ref{modsparL2}}'qn).
%$$
%\end{lemma}

%\begin{remark}
%{\rm We omit proof Lemma \ref{modsparL1} because it is exactly the same as Lemma 3.5 in \cite{InvofsparsenonHerm}. The only difference is in Lemma 3.5 in \cite{InvofsparsenonHerm}, one need to set the $i$th entry to be zero since diagonal of $\bar{A}$ is zero in \cite{InvofsparsenonHerm}.}
%\end{remark}

As a direct application of Corollary 3.7 in \cite{InvofsparsenonHerm}, we have the following corollary.
\begin{corollary}\label{sbspread}
Let $A_n$ be an $n\times m$ matrix with i.i.d. entries of the form $a_{ij}=\xi_{ij}\delta_ij$, where $\xi_{ij},\delta_{ij}$ are the same as in Assumption \ref{assump}. Then for any $\alpha >1$, there exist $\beta,\gamma >0$, depending on $\alpha$ and the fourth moment of $\xi_{ij}$, such that for any $x\in \R^m$, satisfying $\V x\V_{\infty}/ \V x\V_2 \le \alpha \sqrt{p}$, we have 
$$
\L\left(
A_n x, \beta\sqrt{pn} \V x\V_2 \le \exp(-\gamma n)
\right).
$$
\end{corollary}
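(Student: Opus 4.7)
The plan is to reduce the statement to a one-dimensional small-ball estimate and then invoke the tensorization lemma. Note that $A_n$ here is rectangular with fully i.i.d.\ entries (no symmetry constraint), so the rows of $A_n$ are mutually independent, and therefore the coordinates
$$
(A_n x)_i = \sum_{j=1}^m \xi_{ij}\delta_{ij}\,x_j, \qquad i=1,\dots,n,
$$
are i.i.d.\ real-valued random variables. This places us in the exact setting of Lemma~\ref{tens}, so it suffices to obtain a constant-level Levy concentration bound for a single coordinate and then tensorize.

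For the single coordinate, $(A_n x)_1 = \sum_j \xi_{1j}\delta_{1j}\,x_j$ is a weighted sum of independent, sparse, mean-zero, unit-variance random variables with fourth moment bounded by $M_4^4$. The hypothesis $\V x\V_\infty/\V x\V_2 \le \alpha\sqrt{p}$ is precisely the spreadness condition required to apply Corollary~3.7 of \cite{InvofsparsenonHerm}, which yields constants $\beta_0 = \beta_0(\alpha,M_4)>0$ and $q=q(\alpha,M_4)\in (0,1)$ such that
$$
\L\bigl((A_n x)_1,\ \beta_0\sqrt{p}\,\V x\V_2\bigr) \le q.
$$
The same bound holds for every coordinate $i$ by the i.i.d.\ structure.

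To finish, I would apply Lemma~\ref{tens}, part (2), with $\vv = \beta_0\sqrt{p}\V x\V_2$ and the fixed $q$ obtained above. This produces $\vv_1 = \vv_1(\vv,q)$ and $q_1 = q_1(\vv,q) \in (0,1)$ with
$$
\L\bigl(A_n x,\ \vv_1\sqrt{n}\bigr) \le q_1^n.
$$
Because the standard proof of tensorization takes $\vv_1$ as a fixed constant multiple of $\vv$, the scaling $\vv_1\sqrt{n} \asymp \beta_0\sqrt{pn}\,\V x\V_2$ yields the advertised form with $\beta$ proportional to $\beta_0$ and $\gamma = -\log q_1 > 0$, both depending only on $\alpha$ and $M_4$. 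The only point to check carefully is that Corollary~3.7 of \cite{InvofsparsenonHerm} applies verbatim to our entry distribution, i.e.\ that its small-ball input only requires the mean-zero/unit-variance/bounded fourth moment assumption together with the Bernoulli sparsity encoded in Assumption~\ref{assump}; this is a routine bookkeeping step and is the only mild obstacle, since no further combinatorial or decoupling argument beyond the cited corollary is required.
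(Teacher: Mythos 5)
Your strategy---reduce to a one-dimensional anti-concentration estimate and then tensorize over the $n$ independent rows of $A_n x$---is mathematically sound, and it is in fact the internal proof of the very result the paper cites. The paper itself supplies no argument: it simply observes that Corollary~3.7 of \cite{InvofsparsenonHerm} gives the statement (the extension from square $n\times n$ to rectangular $n\times m$ is immediate since the argument never uses squareness). So you are not taking a different route so much as unpacking one layer of the citation.

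That said, there is a misattribution worth correcting. Corollary~3.7 of \cite{InvofsparsenonHerm} is itself the \emph{vector-level} estimate $\L(A_n x,\ \beta\sqrt{pn}\,\V x\V_2)\le e^{-\gamma n}$ --- i.e.\ precisely the conclusion you are trying to prove --- not a scalar bound of the form $\L\bigl((A_n x)_1,\ \beta_0\sqrt{p}\,\V x\V_2\bigr)\le q$. If you invoke Corollary~3.7 you should stop there (that is the paper's one-line proof); if you prefer to run the scalar-plus-tensorization argument, the scalar anti-concentration input lives one or two statements earlier in the same reference (their Lemma~3.5 and its consequence for spread coefficient vectors), which, under $\V x\V_\infty/\V x\V_2\le\alpha\sqrt{p}$, produces a fixed $q=q(\alpha,M_4)<1$ at scale $\beta_0\sqrt{p}\,\V x\V_2$. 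Once the citation is repaired, the remaining steps you outline --- i.i.d.\ coordinates of $A_nx$ because the entries of $A_n$ are fully i.i.d.\ with no symmetry constraint, then Lemma~\ref{tens} part (2) with $\vv_1$ a constant multiple of $\vv$ --- are correct and yield $\beta$ and $\gamma$ depending only on $\alpha$ and $M_4$, as required.
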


Applying these results on Levy concentration we now prove uniform lower bound on $\V A x \V_2$ for vectors in $\dom (M, c)$. Note that proof of following lemma is a direct modification of first part of Lemma 3.8 in \cite{InvofsparsenonHerm}. The only variation is we need to restrict on a block of $A$ to get the small ball probability estimate. 

\begin{lemma}\label{modspar}
Consider $A$ satisfies \ref{assump} and $p\ge (1/4)n^{-1/3}$. For any $u\in \R^n$ and $p^{-1} \le M \le c_{\ref{modspar}}'''n$, there exist $c_{\ref{modspar}},c_{\ref{modspar}}',c_{\ref{modspar}}''$ depending only on $C_{op},M_4$ such that, for any $u\in \R^n$, 
\begin{equation}
\begin{array}{rl}
& \P\Big( \exists x \in \dom \left( M, c_{\ref{modspar}}' \right)
{\rm \ such \ that \ }\\
& \V Ax -u \V_2 \le c_{\ref{modspar}}'' \sqrt{np} {\rm \ and \ } \V A \V\le C_{op}\sqrt{pn} \Big)\\
\le & \exp(-c_{\ref{modspar}}pn).
\end{array}
\end{equation}
\end{lemma}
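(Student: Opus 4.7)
The proof follows the structure of the first part of Lemma 3.8 in \cite{InvofsparsenonHerm}, with one essential modification: Corollary \ref{sbspread} requires genuine independence between the rows of the matrix acting on the spread subvector, so we must restrict attention to a block of rows of $A$ indexed by a set disjoint from the support of the dominant coordinates of $x$. The overall plan is a net-plus-small-ball argument: isolate a well-spread subvector of $x$, apply Corollary \ref{sbspread} to an appropriate submatrix to obtain a small ball probability of $\exp(-\gamma n)$, build an $\varepsilon$-net over $\dom(M, c_{\ref{modspar}}') \cap S^{n-1}$, and combine these via the union bound together with an approximation argument using $\|A\| \le C_{op}\sqrt{pn}$.

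For the small ball step, fix $x \in \dom(M, c_{\ref{modspar}}')$. The dominated condition combined with $x_{(M+1)} \le \|x_{[1:M]}\|_2/\sqrt{M}$ forces $\|x_{[1:M]}\|_2 \ge (1+c_{\ref{modspar}}'^2)^{-1/2}$, so most of the $\ell_2$ mass sits in the top $M$ coordinates. I would then partition the indices with rank in $[(8p)^{-1}+1, M]$ into $O(\log n)$ dyadic level sets by magnitude (discarding magnitudes below $n^{-C}$, whose contribution is negligible) and by pigeonhole locate a level set $T$ carrying an $\Omega(1/\log n)$ fraction of the mass, with $|T| \ge p^{-1}$ and coordinates comparable up to a factor of $2$. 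If no such $T$ exists, then $x$ is effectively $(8p)^{-1}$-sparse and Lemma \ref{compinclu} applies directly. On such a $T$, the restriction $x_T$ satisfies $\|x_T\|_\infty/\|x_T\|_2 \le C/\sqrt{|T|} \le C\sqrt{p}$. Choose a row set $I \subset [n]\setminus T$ with $|I|\ge n/2$: because $I\cap T=\emptyset$ and $A$ is symmetric, the submatrix $A_{I,T}$ has i.i.d.\ entries of the required form, and Corollary \ref{sbspread} yields $\L(A_{I,T} x_T, \beta\sqrt{pn}\|x_T\|_2) \le \exp(-\gamma n)$. Writing $(Ax)_I = A_{I,T} x_T + A_{I,T^c} x_{T^c}$ and controlling the cross term on $\e_{op}$ via $\|A\|\le C_{op}\sqrt{pn}$, this converts to a small ball bound $\P(\|Ax - u\|_2 \le c\sqrt{pn}) \le \exp(-\gamma n)$ for each fixed $x$.

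The final step is the union bound. For a fixed choice of the top-$M$ support and the dyadic level set $T$, a standard $\varepsilon$-net on the corresponding slice of $\dom(M, c_{\ref{modspar}}') \cap S^{n-1}$ has cardinality at most $(C/\varepsilon)^M$; summing over $\binom{n}{M}$ supports and $O(\log n)$ dyadic choices gives at most $\exp(O(M\log(n/M) + M\log(1/\varepsilon)))$ net points. Choosing $\varepsilon$ polynomial in $1/(np)$ so that the operator-norm approximation error stays well under $c_{\ref{modspar}}''\sqrt{pn}/2$ (using also that the dominated tail contributes at most $O(c_{\ref{modspar}}')$ in $\ell_2$), the net cardinality is $\exp(O(M\log(np/M)))$. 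Since $M \le c_{\ref{modspar}}''' n$, taking $c_{\ref{modspar}}'''$ sufficiently small relative to $\gamma$ guarantees $\exp(-\gamma n)$ dominates, completing the bound. The main technical obstacle lies in tracking three competing losses: the $\log n$ factor absorbed into the spread parameter $\alpha$ arising from the dyadic pigeonhole, the halving of the sample size from the row restriction (which replaces $\gamma n$ by $\gamma n/2$ in the small ball exponent), and the net cardinality that grows with $M$; balancing these against the available $\exp(-cpn)$ budget is what pins down the admissible range of $M$ and the final value of $c_{\ref{modspar}}'''$.
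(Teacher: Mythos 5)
Your high-level plan (restrict to an off-support block of rows, invoke Corollary \ref{sbspread}, net, union bound, approximate) is the right skeleton, but two specific steps do not go through, and you are also missing the structural device the paper uses to bypass both.

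First, the polynomially small $\varepsilon$. You propose $\varepsilon \sim 1/(np)$, but the only constraint imposed by the approximation step is $\|A\|\,\varepsilon \le C_{op}\sqrt{pn}\,\varepsilon \lesssim c_{\ref{modspar}}''\sqrt{pn}$, i.e., $\varepsilon$ need only be a small \emph{constant}. Worse, a polynomial $\varepsilon$ actively breaks the union bound: the net cardinality per support is $(C/\varepsilon)^M$, so with $\varepsilon \sim 1/(np)$ you get roughly $\exp\bigl(M\log(np)\bigr)\ge \exp\bigl(c M\log n\bigr)$. With $M$ comparable to $c_{\ref{modspar}}'''n$ this is $\exp\bigl(\Theta(c_{\ref{modspar}}'''\,n\log n)\bigr)$, which no $\exp(-\gamma n)$ small ball bound can dominate for a \emph{fixed} constant $c_{\ref{modspar}}'''$ — you would be forced to take $c_{\ref{modspar}}'''$ decaying in $n$, contradicting the lemma statement. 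A constant $\varepsilon$ gives $\binom{n}{M}(3/\varepsilon)^M \le \exp\bigl(c_{\ref{modspar}}'''n\log(3e/(c_{\ref{modspar}}'''\varepsilon))\bigr)$, which is what you need.

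Second, the dyadic pigeonhole over $O(\log n)$ level sets hands you a block $T$ whose spread ratio is only $\|x_T\|_\infty/\|x_T\|_2 \lesssim \sqrt{p\log n}$, not $\lesssim \sqrt{p}$. Corollary \ref{sbspread} is stated for a fixed $\alpha$, with $\gamma$ allowed to depend on $\alpha$; if you feed it $\alpha\sim\sqrt{\log n}$ the exponent $\gamma$ is not controlled uniformly in $n$, and you would have to reopen that proof to verify $\gamma n$ still overwhelms the net. Neither of these complications is necessary. The paper instead defines
$$
W:=\dom(M,c_{\ref{modspar}}')\setminus\bigl(\comp((8p)^{-1},c_{\ref{compinclu}}')\cup\dom((8p)^{-1},c_{\ref{domonly}}')\bigr),
$$
pre-handles the removed pieces via Lemmas \ref{domonly} and \ref{compinclu}, and observes that for the remaining sparse representatives $x\in V=\sparse(M)\setminus(\cdots)$ the entire middle segment $x_{[m+1:M]}$ (with $m=(8p)^{-1}$) is \emph{automatically} well-spread with $\|x_{[m+1:M]}\|_\infty/\|x_{[m+1:M]}\|_2 \le (c_{\ref{domonly}}')^{-1}\sqrt{8p}$, a constant $\alpha$. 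It also uses $x\notin\comp(m,c_{\ref{compinclu}}')$ to get $\|x_{[m+1:M]}\|_2 \ge c_{\ref{compinclu}}'$, so the small ball scale $c_1\sqrt{pn}\|x_{[m+1:M]}\|_2$ is comparable to $\sqrt{pn}$ with a constant net resolution. This removes the dyadic decomposition, the $\log n$ loss in $\alpha$, and the need for a small $\varepsilon$, all in one stroke. The rest of your plan — the block $A^x$ on rows $[n]\setminus\mathrm{supp}(x)$ (only $M\le c_{\ref{modspar}}'''n$ rows lost, not half), the union bound, and the approximation from $W$ back to the sparse net using $\|A\|\le C_{op}\sqrt{pn}$ and the dominated tail bound — then matches the paper.
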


\begin{proof}
For convenience, denote $m=(8p)^{-1}$, so we have $m<M/2$. Due to Lemma \ref{domonly} and \ref{compinclu}, it is enough to obtain a uniform lower bound for all vectors from the set
$$
W:=\dom\left( M, c_{\ref{modspar}}' \right) \backslash
\left(
\comp((8p)^{-1},c_{\ref{compinclu}}')\cup \dom((8p)^{-1}, c_{\ref{domonly}}')
\right).
$$
We start with a set with only $M$-sparse vectors
$$
V:=\sparse(M) \backslash
\left(
\comp((8p)^{-1},c_{\ref{compinclu}}')\cup \dom((8p)^{-1}, c_{\ref{domonly}}')
\right).
$$

Since $p\ge (1/4)n^{-1/3}$, the proof is based on the straightforward $\vv$-net argument as in Lemma 3.8 in \cite{InvofsparsenonHerm}. Since for any $x\in V, x\notin \left(
\comp((8p)^{-1},c_{\ref{compinclu}}')\cup \dom((8p)^{-1}, c_{\ref{domonly}}')
\right)$, we have that 
$$
\frac{\V x_{[m+1:M]} \V_{\infty}}{\V x_{[m+1:M]} \V_2} \le (c_{\ref{domonly}}')^{-1} \sqrt{8p}.
$$

Now for this given $x$, define $A^x$ to be the sub-matrix restricted on the columns corresponding to ${\rm supp}(x)$ and rows corresponding to $[n]\backslash {\rm supp}(x)$. Then $A^x$ is an $(n-M)\times M$ submatrix with i.i.d. entries. By Corollary \ref{sbspread} and properties of Levy's concentration function, we have
\begin{equation}
\begin{array}{rl}
& \L \left( Ax, c_1 \sqrt{pn} \V x_{m+1:M} \V_2  \right)      \\
\le& \L \left( A^x x, c_1 \sqrt{pn} \V x_{m+1:M} \V_2  \right)\\
\le&  \exp(-c_2n)
\end{array}
\end{equation}
where $c_1,c_2$ depending only on $C_{op},M_4$.

Now, we will use this estimate of the Levy concentration function to control the infimum over $V$.  Since $V\subset\sparse(M)$, note  that the set $V$ is contained in $S^{n-1}$ intersected with the union of coordinate subspaces of dimension $M$. Thus, for $\vv < c_{\ref{compinclu}}'c_{\ref{modspar}}'$, there exists an $\vv-$ net $\N\subset V$ of cardinality less than
$$
\binom{n}{M}\left( \frac{3}{\vv } \right)^M \le \exp\left(c_{\ref{modspar}}'''n \log \left( \frac{3e}{c_{\ref{modspar}}'''\vv} \right) \right).
$$
We used the assumption $M\le c_{\ref{modspar}}'''n$ in above estimate. Moreover, we can choose the constant $c_{\ref{modspar}}'''$ sufficiently small (depending on $\vv$) so that $|\N|\le \exp(c_2n/2)$.  Using the union bound argument, we have
$$
\P \left(
\exists x\in \N, u\in \R^n |
\V Ax- u \V_2 \le c_1 \sqrt{pn} \V x_{[m+1:M]} \V_2 
\right)\le \exp(-c_2n/2).
$$

Now we can approximate any point of $W$ by a point of $\N$. Assume that for any $x\in \N$, 
$$
\V Ax -u \V_2 \ge c_1 \sqrt{pn} \V x_{[m+1:M]} \V_2.
$$
Let $x'\in W$, then we can find $x\in \N$ such that 
$$
\V  x'_{[1:M]}/ \V x'_{[1:M]} \V_2  -x \V_2 \le \vv . 
$$
Now, we show that $x$ and $x'$ are close. Since $m\le M/2$ and all coordinates of $x'_{[M+1:n]}$ have smaller absolute value than those of $x'_{[1:M]}$, we have
$$
\sqrt{M} \V X'_{[M+1:n]} \V_{\infty} \le \sqrt{2} \V x'_{[m+1:M]} \V_2.
$$
Now recall that $x'\in \dom(M,c_{\ref{modspar}}')$, so we have 
$$
\V x'_{[M+1:n]} \V_2 \le c_{\ref{modspar}}' \sqrt{M} \V x'_{[M+1:n]} \V_{\infty}
\le \sqrt{2} c_{\ref{modspar}}' \V x'_{[m+1:M]} \V_2.
$$
Now, we can use the fact that $\V  x'_{[1:M]}/ \V x'_{[1:M]} \V_2  -x \V_2 \le \vv$ together with triangle inequality. Therefore,  we have
$$
\V x'_{[m+1:M]} \V_2 \le \V x'_{[1:M]} \V_2(\V x_{[m+1:M]} \V_2  + \vv)  \le \V x_{[m+1:M]} \V_2  + \vv.
$$
Now, for any $x\in \N$, $x\notin \comp{(m, c_{\ref{compinclu}}')}$, $\V x_{[m+1:M]} \V_2 \ge c_{\ref{compinclu}}' \ge  \vv$.  Applying previous two inequalities, we also have
\begin{equation}
\begin{array}{rl}
\V x'_{[M+1:n]} \V_2 & \le \sqrt{2} c_{\ref{modspar}}' \V x'_{[m+1:M]} \V_2 \\
& \le 2 c_{\ref{modspar}}' (\V x_{[m+1:M]} \V_2 +\vv)
\le 4 c_{\ref{modspar}}' \V x_{[m+1:M]} \V_2 
\end{array}
\end{equation}
and
\begin{equation}
\begin{array}{rl}
\V x- x'\V_2 & \le \left\Vert
x- x'_{[1:M]} / \V x'_{[1:M]} \V_2
\right\Vert_2
+\left| 1- \V x'_{[1:M]} \V_2 \right| + \V x'_{[M+1:n]} \V_2 \\
     & \le \vv + 2 \V x'_{[M+1:n]} \V_2
     \le \vv + 8 c_{\ref{modspar}}'\V x_{[m+1:M]} \V_2\\
     & \le 9 c_{\ref{modspar}}'\V x_{[m+1:M]} \V_2.
\end{array}
\end{equation}
Finally, by choosing $c_{\ref{modspar}}'$ sufficiently small, by the triangle inequality,
\begin{equation}
\begin{array}{rl}
\V Ax' -u \V_2 & \ge \V Ax -u \V - \V A \V \V x- x' \V_2   
 \\
     & \ge (c_1-9c_{\ref{modspar}}'C_{op})\sqrt{pn} \V x_{[m+1:M]}\V_2 \ge c_{\ref{modspar}}''\sqrt{pn}.
\end{array}
\end{equation}

\end{proof}

Now, we can conclude Theorem \ref{mthcomp}.
\begin{proof}
Theorem \ref{mthcomp} follow directly from a similar argument of Lemma \ref{compinclu}.

\end{proof}

Now, by Theorem \ref{mthcomp}, we have following small probability estimate similar to Proposition 4.2 in \cite{Invofsym}.
\begin{theorem}\label{sbcomp}{\rm (Small ball probability for compressible vectors)}. Consider $A$ satisfies \ref{assump} and $p\ge (1/4)n^{-1/3}$. For every $u\in \R^n$, one has
$$
\P\left( \inf_{\frac{x}{\V x\V_2}\in \comp(c_sn, c_d)} 
\V  Ax -u \V_2/\V x \V_2 \le c_{\ref{sbcomp}}'\sqrt{pn} \wedge \e_{op} 
\right)
\le 2 \exp(-c_{\ref{sbcomp}}pn)
$$
where $c_s,c_d,c_{\ref{sbcomp}},c_{\ref{sbcomp}}'$ depending only on $M_4,C_{op}$.

\end{theorem}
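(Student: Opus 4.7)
The statement is a scale-invariant reformulation of Theorem~\ref{mthcomp}: rather than considering only unit vectors in $\comp(M, c')$, it considers every $x \in \R^n \setminus \{0\}$ whose direction $x/\V x\V_2$ lies in $\comp(c_s n, c_d)$, and lower-bounds $\V Ax - u\V_2 / \V x\V_2$. My plan is to reduce the uniform infimum to Theorem~\ref{mthcomp} through a case analysis in the scale parameter $t := \V x\V_2$.

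First, I would fix $c_s \in (0, c_{\ref{mthcomp}}''']$ and $c_d \in (0, \min(c_{\ref{mthcomp}}', C_{\ref{mthcomp}}^{-1}))$; under $p \geq (1/4) n^{-1/3}$ the hypothesis $p^{-1} \leq c_s n$ of Theorem~\ref{mthcomp} holds for $n$ large, and $\comp(c_s n, c_d) \cap S^{n-1} \subseteq \comp(c_s n, c_{\ref{mthcomp}}')$. Writing $y := x/\V x\V_2$ and $t := \V x\V_2$, the key identity is $\V Ax - u\V_2/\V x\V_2 = \V Ay - u/t\V_2$; as $t$ varies over $(0, \infty)$, $u/t$ sweeps the ray $\R_+ u$, so the infimum in the theorem equals $\inf_{y} \text{dist}(Ay, \R_+ u)$ where $y$ ranges over unit vectors in $\comp(c_s n, c_d)$.

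Next, apply Theorem~\ref{mthcomp} with the two targets $u' = 0$ and $u' = u$ to produce events $\Omega_0$ and $\Omega_1$ satisfying $\P((\Omega_0 \cap \Omega_1)^c \cap \e_{op}) \leq 2 \exp(-c_{\ref{mthcomp}} pn)$, on which $\V Ay\V_2 \geq c_{\ref{mthcomp}}''\sqrt{pn}$ and $\V Ay - u\V_2 \geq c_{\ref{mthcomp}}''\sqrt{pn}$ uniformly in unit $y \in \comp(c_s n, c_d)$. On this good event I would then analyze $\V Ay - u/t\V_2$ in three cases: for $t \geq T_1 := 2\V u\V_2/(c_{\ref{mthcomp}}''\sqrt{pn})$, $\V u/t\V_2 \leq c_{\ref{mthcomp}}''\sqrt{pn}/2$ and the triangle inequality with $\Omega_0$ gives a lower bound of $c_{\ref{mthcomp}}''\sqrt{pn}/2$; for $t \leq T_0 := \V u\V_2/(2 C_{op}\sqrt{pn})$, $\V u/t\V_2 \geq 2 C_{op}\sqrt{pn}$ dwarfs $\V Ay\V_2 \leq C_{op}\sqrt{pn}$ on $\e_{op}$, giving a lower bound of $C_{op}\sqrt{pn}$. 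The remaining range $t \in [T_0, T_1]$ is a constant multiplicative interval of ratio $4 C_{op}/c_{\ref{mthcomp}}''$; I would cover it by a multiplicative $\rho$-net $\{t_1, \ldots, t_N\}$ of cardinality $N = O(1)$, invoke Theorem~\ref{mthcomp} with target $u/t_k$ at each net point, and interpolate by the triangle inequality. The $O(1)$ extra applications inflate the probability only by a constant factor, absorbed by slightly decreasing $c_{\ref{sbcomp}}$.

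The main obstacle is the intermediate-$t$ range, where $\V u/t\V_2$ is comparable to $\V Ay\V_2 \sim \sqrt{pn}$ and near-cancellation is possible. The net ratio $\rho > 1$ must be chosen small enough, depending only on $C_{op}$ and $c_{\ref{mthcomp}}''$, that the perturbation $\V u/t - u/t_k\V_2$ between a generic $t$ and its nearest net point $t_k$ is less than half the small-ball gap $c_{\ref{mthcomp}}''\sqrt{pn}$, so that triangle-inequality interpolation succeeds. Outside this intermediate range the conclusion is immediate from the two master applications of Theorem~\ref{mthcomp} together with the operator-norm bound enforced by $\e_{op}$.
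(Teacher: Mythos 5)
Your proposal is correct and follows the same fundamental strategy as the paper: rewrite $\V Ax-u\V_2/\V x\V_2 = \V A(x/\V x\V_2) - u/\V x\V_2\V_2$, observe that on the good event the scaled target $u/\V x\V_2$ lies in a bounded region of size $O(\sqrt{pn})$, cover that region by an $O(1)$-point net, apply Theorem~\ref{mthcomp} at each net point, and union-bound. The paper streamlines this by noticing immediately that on $\e \cap \e_{op}$ one has $\V u_0\V_2 \le \V Ax_0\V_2 + c'\sqrt{pn} \le 2C_{op}\sqrt{pn}$, so $u_0 := u/\V x\V_2$ lies in the segment $E = \span(u) \cap 2C_{op}\sqrt{pn}\,B_2^n$; it then places a single additive $(c_1\sqrt{pn})$-net on $E$ of cardinality $\le 2C_{op}/c_1$ and applies Theorem~\ref{mthcomp} once per net point. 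Your three-case split in $t = \V x\V_2$ arrives at the same conclusion but less directly: your Case 2 ($t \le T_0$, where $\V u/t\V_2 \ge 2C_{op}\sqrt{pn}$) is exactly the complement of the paper's bounded interval and is ruled out by the operator-norm bound for free; your Case 1 ($t \ge T_1$) corresponds to net points near the origin of $E$; and your Case 3 with its multiplicative $\rho$-net in $t$ is an equivalent, if slightly more awkward, way of covering the rest of $E$. Both are valid; the paper's version simply packages the constant-size net construction without splitting into scale regimes, which also sidesteps having to track the ratio $T_1/T_0$ and choose $\rho$ in terms of $c_{\ref{mthcomp}}''/C_{op}$.
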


\begin{proof} 
Let $\e$ be the event in the left hand side whose probability need to be estimated. We start with some fixed small positive numbers of $c_s,c_d$ and $c_{\ref{sbcomp}}'$ which specific choice will be decided later. Conditioning on $\e$, we have that there exist vectors $u_0:=u/\V x \V_2 \in \span(u)$ and $x_0:=x/\V x \V_2 \in \comp(c_sn,c_d)$ such that 
$$
\V A x_0 -u_0 \V_2 \le c_{\ref{sbcomp}}'\sqrt{pn}.
$$
By definition of event $\e_{op}$, we have
$$
\V u_0 \V_2 \le \V Ax_0 \V + c_{\ref{sbcomp}}'\sqrt{pn} \le C_{op} \sqrt{pn} +c_{\ref{sbcomp}}'\sqrt{pn} \le 2C_{op} \sqrt{pn}
$$
Therefore
$$
u_0 \in \span(u)\cap 2 C_{op} \sqrt{pn} B_2^n=:E
$$
Let ${\mathcal M}$ be a $(c_1\sqrt{pn})$-net of the interval $E$ such that
$$
| {\mathcal M} | \le \frac{2C_{op} \sqrt{pn}}{c_1  \sqrt{pn}}= \frac{2C_{op}}{c_1}
$$
and choose $v_0 \in | {\mathcal M} |$ such that $\V v_0 -u_0 \V_2 \le c_1 \sqrt{pn}$. Then
$$
\V A x_0 -v_0 \V_2 \le c_{\ref{sbcomp}}' \sqrt{pn} +c_{1} \sqrt{pn}.
$$
Now we may choose $c_{\ref{sbcomp}}',c_1 \in (0,1)$ such that $c_{\ref{sbcomp}}'+c_1\le c_{\ref{mthcomp}}''$. So the event $\e$ implies the existence of vector $x\in \comp(c_sn,c_d),v_0\in\M$ such that $\V A x_0 -v_0 \V_2 \le c_{\ref{mthcomp}}''\sqrt{pn}$. Taking the union bound over $\M$, we have
$$
\P(\e) \le |\M| \max_{v_0\in \M} \P \left\{ 
\exists x \in \comp(c_sn,c_d) {\rm \ such \ that \ }
\V Ax - v_0 \V_2 \le c_{\ref{mthcomp}}'' \sqrt{np}
\right\}.
$$
Now we may apply Theorem \ref{mthcomp} together with the net  cardinalities estimates and we get
$$
\P(\e) \le \frac{2C_{op}}{c_1} \exp(-c_{\ref{sbcomp}}np).
$$
Use the condition on $p$ then we are done. The $c_s,c_d$ in this theorem can be chosen as $c_{\ref{mthcomp}}'$ and $c_{\ref{mthcomp}}'''$ in Theorem \ref{mthcomp}.

\end{proof}
\begin{remark}
{\rm Note that the constants $c_s,c_d$ can be chosen depending only on $C_{op},M_4$. These to constants are fixed in the later part of the proof. An immediate consequence of Theorem 
\ref{sbcomp} is }
\begin{equation}\label{lbforcomp}
\begin{array}{rl}
     \displaystyle\P \left\{
\inf_{x\in \comp(c_sn,c_d)} \V A x\V_2 \le \vv \sqrt{\frac{p}{n}} \wedge \e_{op}
\right\} & \le  2 \exp(-c_{\ref{sbcomp}}pn) \\
     & 
\end{array}
\end{equation}

\end{remark}

\section{Invertibility over incompressible vectors}\label{SecIoverInc}
Our goal in the following sections is to show, with high probability
$$
\min_{x\in \incomp{(c_sn,c_d)}} \V Ax \V_2 \gtrsim \sqrt{\frac{p}{n}}.
$$

\subsection{Incompressible vectors are spread}

Note that in Theorem \ref{sbcomp} and from now on, we will adapt the methodology of Vershynin in \cite{Invofsym} in order to decouple the symmetric matrix. Although some proofs are very similar to those in  \cite{Invofsym}, we still need to went through several proofs  in much detail under our setting. This need to be done to ensure the methodology works as well in sparsity setting. And what is more important is to  catch the affect of sparsity especially how it affect the probability bounds. For convenience of reader and to show the connection in methodology, we will try to use similar notation and structure as proofs in \cite{Invofsym}. 

First, we want to note that although the  incompressible vectors have many non-negligible coordinate but they have different advantage. Incompressible vectors $x$ have many coordinates that are well spread, that is to say a set of coordinates of size of order $n$ whose magnitudes are all of the  order $n^{-1/2}$. More precisely, we have the following lemma, see Lemma 3.4 in \cite{LOandInvofRM}:

\begin{lemma}\label{incompsp}{\rm (Incompressible vectors are spread)}. For every $x\in \incomp(c_0n,c_1)$, one has 
\begin{equation}\label{reg}
\begin{array}{rl}
\displaystyle\frac{c_1}{\sqrt{2n}}\le |x_k| \le \frac{1}{\sqrt{c_0 n}}
\end{array}
\end{equation}
for at least $\frac{1}{2}c_0 c_1^2 n$ coordinates $x_k$ of $x$.
\end{lemma}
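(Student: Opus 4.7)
The plan is a direct counting argument contradicting the definition of incompressibility. First I would split the coordinates of $x$ into three groups according to magnitude:
\begin{align*}
\sigma_{\text{large}} &:= \{k \in [n] : |x_k| > 1/\sqrt{c_0 n}\}, \\
\sigma_{\text{small}} &:= \{k \in [n] : |x_k| < c_1/\sqrt{2n}\}, \\
\sigma_{\text{reg}} &:= [n] \setminus (\sigma_{\text{large}} \cup \sigma_{\text{small}}),
\end{align*}
so that $\sigma_{\text{reg}}$ is exactly the set of coordinates in the regime \eqref{reg}. The goal becomes showing $|\sigma_{\text{reg}}| \ge \tfrac12 c_0 c_1^2 n$.

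The easy step is to bound $|\sigma_{\text{large}}|$. Since $\V x \V_2 = 1$, the trivial estimate $\sum_{k \in \sigma_{\text{large}}} x_k^2 \le 1$ together with the lower bound $x_k^2 > 1/(c_0 n)$ on this set forces $|\sigma_{\text{large}}| < c_0 n$.

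The main step is to argue by contradiction: assume $|\sigma_{\text{reg}}| < \tfrac12 c_0 c_1^2 n$, and let $y$ be the vector obtained from $x$ by zeroing out every coordinate outside $\sigma_{\text{large}}$. Then $y$ is supported on $\sigma_{\text{large}}$, hence $y \in \sparse(c_0 n)$ by the previous step. Split the remaining mass as
$$
\V x - y \V_2^2 \;=\; \sum_{k \in \sigma_{\text{small}}} x_k^2 \;+\; \sum_{k \in \sigma_{\text{reg}}} x_k^2.
$$
Using the definitions, the first sum is at most $n \cdot c_1^2/(2n) = c_1^2/2$, and the second is at most $|\sigma_{\text{reg}}|/(c_0 n) < c_1^2/2$. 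Adding gives $\V x - y \V_2 < c_1$, which contradicts $x \in \incomp(c_0 n, c_1)$.

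This proof is essentially bookkeeping and I do not anticipate any serious obstacle: the two thresholds $1/\sqrt{c_0 n}$ and $c_1/\sqrt{2n}$ are tailored so that both summands above are bounded by $c_1^2/2$, which is where the $\tfrac12 c_0 c_1^2 n$ in the conclusion comes from. The only subtlety is making sure $y$ is genuinely $c_0 n$-sparse (not merely sparse up to rounding), which follows from the strict inequality $|\sigma_{\text{large}}| < c_0 n$ obtained in the easy step.
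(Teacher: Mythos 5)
Your proof is correct, and it is essentially the same counting/contradiction argument as Lemma 3.4 in \cite{LOandInvofRM}, which the paper cites without reproducing the proof. The two thresholds are indeed chosen so that the mass missed by the sparse truncation $y$ splits into two pieces each bounded by $c_1^2/2$, and the pigeonhole bound $|\sigma_{\text{large}}| < c_0 n$ guarantees $y \in \sparse(c_0 n)$; nothing further is needed.
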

We fix some constant $c_{oo}$ such that as in \cite{LOandInvofRM}
$$
\frac{1}{4}c_s c_d^2 \le c_{oo} \le \frac{1}{4}.
$$
Here note that  the value of $c_{oo}$ depend only on $c_s$ and $c_d$, which depend only on the parameters $C_{op}$ and $M_4$.  We may assign a subset  called spread$(x)\subset [n]$ for every vector $x\in\incomp(c_sn,c_d)$ such that 
$$
| {\rm spread}(x)| = \left \lceil{c_{oo}n}\right \rceil
$$
and the property in Lemma \ref{incompsp} hold for any $k\in {\rm spread}(x)$. The point here is that not all of the coordinates $x_k$ satisfying Lemma \ref{incompsp} will be good, the set ${\rm spread}(x)$ will allow us to only focus on the good coordinates. At this point, we may consider an arbitrary valid assignment of ${\rm spread}(x)$ to $x$, the particular choice will be decide later in the proof.

\subsection{Distance problem via small ball probabilities for quadratic forms}
To derive incompressible part of the invertibility problem, we need the following Lemma, see Lemma 2.4 in \cite{InvofsparsenonHerm}.
\begin{lemma}\label{invviadist} {\rm (Invertibility via distance)}. For $j\in [n]$, let $A_j$ denote the $j-$th column of $A_n$, and let $H_j$ be the subspace of $\R^n$ spanned by ${A_i,i\in [n]\backslash j}$. Then for any $\vv, \rho>0$, and $M<n$, we have 
$$
\P \left( \inf_{x\in \incomp (M, \rho)} \V A x \V_2 \le \vv \sqrt{\frac{p}{n}} \right) \le \frac{1}{M} \sum_{j=1}^n
\P\left(
\dist (A_j, H_j) \le  \sqrt{p} \vv
\right)
$$
\end{lemma}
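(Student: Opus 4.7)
The plan is to reduce the infimum over incompressible vectors to a sum of one-column distance probabilities, using the elementary identity
$$\V Ax \V_2 \ge \dist(Ax, H_j) = |x_j|\,\dist(A_j, H_j),$$
which holds for every $j \in [n]$ because $Ax - x_j A_j = \sum_{k\ne j} x_k A_k$ lies in $H_j$. Thus the hypothesis $\V Ax \V_2 \le \vv\sqrt{p/n}$ would immediately yield $\dist(A_j, H_j) \le \vv\sqrt{p/n}/|x_j|$ simultaneously in $j$; the estimate is, however, useless at indices where $|x_j|$ is tiny. Incompressibility of $x$ is exactly what guarantees many coordinates that are \emph{not} tiny.

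Concretely, I would first invoke the spread-coordinate lemma (Lemma \ref{incompsp}, applied with $c_0 = M/n$ and $c_1 = \rho$) to attach to each $x \in \incomp(M,\rho)$ a set $T(x) \subseteq [n]$ of cardinality at least $\tfrac{1}{2}\rho^2 M$ on which $|x_k| \ge \rho/\sqrt{2n}$. Plugging these coordinate lower bounds into the distance identity shows: on the event $\V Ax \V_2 \le \vv\sqrt{p/n}$, every $j \in T(x)$ satisfies $\dist(A_j, H_j) \le (\sqrt{2}/\rho)\vv\sqrt p$. Hence the random set
$$N_\vv := \bigl\{j \in [n] : \dist(A_j, H_j) \le \tfrac{\sqrt 2}{\rho}\vv\sqrt p\bigr\},$$
which is defined independently of $x$, contains $T(x)$, and in particular $|N_\vv| \ge \tfrac{1}{2}\rho^2 M$ on this event.

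The final step is Markov's inequality applied to the integer-valued random variable $|N_\vv|$, whose expectation is $\sum_{j=1}^n \P(\dist(A_j,H_j) \le (\sqrt 2/\rho)\vv\sqrt p)$ by linearity. This produces
$$\P\bigl(|N_\vv| \ge \tfrac{1}{2}\rho^2 M\bigr) \le \frac{2}{\rho^2 M}\sum_{j=1}^n \P\bigl(\dist(A_j,H_j) \le \tfrac{\sqrt 2}{\rho}\vv\sqrt p\bigr),$$
and after rescaling $\vv$ to absorb the factor $\sqrt 2/\rho$, and folding $2/\rho^2$ into the overall constant (legitimate since $\rho$ is a fixed parameter of the problem), one recovers the stated form.

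I do not anticipate any serious obstacle here: the lemma is essentially bookkeeping, packaging the spread-coordinate property of Lemma \ref{incompsp} together with a one-line distance identity and a single Markov step. The only real care required is tracking constants — in particular, recognizing that the $1/M$ prefactor in the conclusion is precisely the $|T(x)| \asymp M$ scaling of the spread set transported through Markov, while the $\rho$-dependence is suppressed into the implicit constant and the renaming of $\vv$.
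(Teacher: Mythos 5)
Your proof is correct and is precisely the standard Rudelson--Vershynin argument (the paper does not prove this lemma itself but cites it from \cite{InvofsparsenonHerm}; both that source and Lemma 3.5 of \cite{LOandInvofRM} use exactly your chain: distance identity $\V Ax\V_2 \ge \dist(Ax,H_j) = |x_j|\dist(A_j,H_j)$, the spread set from Lemma \ref{incompsp}, and Markov applied to the count of ``bad'' columns).

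One small caution on the last sentence of your write-up: what your argument literally produces is
$$
\P\left( \inf_{x\in \incomp(M,\rho)} \V Ax \V_2 \le \vv\sqrt{p/n} \right) \le \frac{2}{\rho^2 M}\sum_{j=1}^n \P\left( \dist(A_j,H_j) \le \tfrac{\sqrt{2}}{\rho}\,\vv\sqrt{p} \right),
$$
and while renaming $\vv \mapsto (\rho/\sqrt{2})\vv$ cleans up the argument of the distance probability, the prefactor $2/\rho^2$ cannot be ``folded into the overall constant'' because the statement of Lemma \ref{invviadist} has no free multiplicative constant in front. So you have proved a marginally weaker inequality than the one printed; the printed version is being slightly cavalier with $\rho$-dependent constants (which is harmless downstream, since the lemma is applied with $\rho = c_d$ fixed, and these factors disappear into $C_{\ref{mth}}$). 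Other than that phrasing issue, the mechanism and the bookkeeping are exactly right.
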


So we may reduces the invertibility problem to the distance problem, namely an upper bound on the probability 
$$
\P\left(
\dist (A_1, H_1) \le c_1 \sqrt{p} \vv
\right)
$$
where $A_1$ is the first column of $A$ and $H_1$ is the span of the other column. (By a permutation of the indices in $[n]$, the same bound would hold for all $\dist(A_k,H_k)$ as required in Lemma \ref{invviadist}).

But we have a symmetric matrix, to do the decoupling we  need tools to evaluate the distance problem. To this end, the following proposition in \cite{Invofsym} reduces the distance problem to the small ball probability for quadratic forms of random variables:
\begin{proposition}\label{distqua}{\rm (Distance problems via quadratic forms)}. Let $A=(a_{ij})$ be an arbitrary $n\times n$ matrix. Let $A_1$ denote the first column of $A$ and $H_1$ denote the span of the other columns. Furthermore, let $B$ denote the $(n-1)\times(n-1)$ minor of $A$ obtained by removing the first row and the first column from $A$, and let $X\in \R^{n-1}$ denote the first column of $A$ with the first entry removed. Then 
$$
\dist (A_1, H_1) =\frac{| \langle B^{-1} X, X\rangle -a_{11}|}{\sqrt{
1+\V B^{-1} X\V_2^2}}.
$$

\end{proposition}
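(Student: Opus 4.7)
The plan is to compute $\dist(A_1,H_1)$ by producing an explicit nonzero vector $u$ orthogonal to $H_1$ and then using the identity $\dist(A_1,H_1)=|\langle A_1,u\rangle|/\V u\V_2$. Writing $A$ in block form
$$
A=\begin{pmatrix} a_{11} & X^T\\ X & B\end{pmatrix}
$$
(which uses the symmetry of $A$, implicit here since the proposition is being applied to the symmetric matrix of Assumption \ref{assump}), a candidate $u=(\alpha,w)^T$ with $\alpha\in\R$ and $w\in\R^{n-1}$ is orthogonal to each column $A_j$, $j\ge 2$, precisely when $\alpha X+B^T w=0$. This is the only content of the orthogonality conditions because the top entry of $A_j$ multiplies $\alpha$ and the lower block of $A_j$ multiplies $w$, giving one scalar equation for each $j\in\{2,\ldots,n\}$ that bundles into the vector equation above.

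Using $B^T=B$ and assuming $B$ is invertible, I normalize by setting $\alpha=1$ and solve $w=-B^{-1}X$, producing the concrete normal $u=(1,-B^{-1}X)^T$. Direct computation then gives
$$
\langle A_1,u\rangle = a_{11}-\langle X,B^{-1}X\rangle=a_{11}-\langle B^{-1}X,X\rangle,\qquad \V u\V_2^2=1+\V B^{-1}X\V_2^2.
$$
Plugging these into $\dist(A_1,H_1)=|\langle A_1,u\rangle|/\V u\V_2$ and taking the absolute value yields the claimed formula.

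The only subtlety I foresee is the possible singularity of $B$. In the probabilistic application, $B$ is a random symmetric minor and is invertible almost surely, so this degenerate case can be ignored; alternatively, one can regularize by replacing $B$ with $B+\eta I$ for small $\eta>0$ and pass to the limit $\eta\to 0$, since both sides of the identity depend continuously on $B$ whenever $B$ is nonsingular. Beyond that book-keeping, the argument is purely a linear-algebra computation with no substantive obstacle.
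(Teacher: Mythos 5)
Your proof is correct, and the paper itself does not prove this proposition—it cites it from Vershynin's paper on symmetric random matrices, where the same normal-vector construction $u=(1,-B^{-1}X)^T$ is used. You rightly flag that the identity relies on symmetry: the word ``arbitrary'' in the statement is a misnomer, since writing the top-right block as $X^T$ (rather than the first row minus $a_{11}$) and obtaining the cancellation to $\langle B^{-1}X,X\rangle$ both require the first row of $A$ to agree with its first column. One small point worth making explicit: the identity $\dist(A_1,H_1)=|\langle A_1,u\rangle|/\V u\V_2$ for a single orthogonal $u$ uses that $u$ spans all of $H_1^\perp$; this holds here because invertibility of $B$ forces $A_2,\dots,A_n$ to be linearly independent, so $\dim H_1=n-1$.
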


\begin{remark}
{\rm We may apply Proposition \ref{distqua} to the $n\times n$ random matrix $A$ which we studied. Consider $a_{1,1}$ as an arbitrary fixed number and bound our probability uniformly for all $a_{1,1}$, the problem reduces to estimating the small ball probability for the quadratic form $\langle B^{-1} X, X\rangle$. The random matrix $B$ has the same structure as $A$ except for the dimension is $n-1$. Thus it will be convenient to develop the theory in dimension $n$ for the quadratic forms $\langle A^{-1} X, X \rangle $, where $X$ is an independent random vector (see Remark 5.2 in \cite{Invofsym}).  }
\end{remark}

\subsection{Small ball probabilities for quadratic forms via additive structure}
It is a popular and powerful to estimate small ball probabilities using the additive structure of vectors. For completion of our argument, let us first review the the Littlewood-Offord theory and its extension to quadratic forms by decoupling, see \cite{Invofsym}.

Linear Littlewood-Offord theory concerns the small ball probabilities for the sums of the form $S=\sum x_k \xi_k$ where $\xi_k$ are identically distributed independent random variables, and $x=(x_1,\cdots,x_n)\in S^{n-1}$ is a given coefficient vector. The   additive structure of $x\in \R^n$ is characterized by the least common denominator (LCD) of $x$. If the coordinates $x_k=p_k/q_k$ are rational numbers, one can  measure the additive structure in $x$ using the least denominator $D(x)$ of these ratios, which is the common multiple of the integers $q_k$. In the other words, $D(x)$ is the smallest number $\theta>0$  such that $\theta x \in \Z^n$. An extension of this concept for general vectors with real coefficients was developed in \cite{LOandInvofRM,SMstSvRect, Invofsym} which give us following definition of LCD.

\begin{definition}\label{LCDdef1} {\rm (Least Common Denominator)}. Let $L\ge 1$. We defined the least common denominator (LCD) of $x\in S^{n-1}$ as 
$$
D_{L}(x) = \inf \left\{
\theta>0: \dist(\theta x, \Z^n) < L \sqrt{\log_{+}(\theta /L)}
\right\}.
$$
\end{definition}
\begin{remark}
{\rm If the vector $x$ is considered in $\R^I$ for some subset $I\subset [n]$, then in this definition we replace $\Z^n$ by $\Z^I$.}
\end{remark}
It can be easily seen that we always have  $D_L(x)>L$. We may also notice that the parameter $L$ is up to our choice.  Recall by Remark \ref{delta0} that there exists $\delta_0,\vv'_0\in (0,1)$, such that for any $\vv <\vv'_0$, $\L (\xi_{ij}\delta_{ij},\vv)\le 1- \delta_0 p$. Due to the sparsity, we will often use the parametrization $L=(\delta p)^{-1/2}$ in our proofs (also see Section 4 of \cite{InvofsparsenonHerm}).
%With this specific choice, we have the following definition.
%\begin{definition}\label{LCDdef2}Let $p<1,\delta \in (0,1)$. We defined the least common denominator (LCD) of $x\in S^{n-1}$ as
%$$
%D_{\delta}(x):= \inf \left\{ \theta>0: \dist(\theta x, \Z^n) < (\delta p)^{-1/2} \sqrt{\log_{+}(\sqrt{\delta p}\theta )} \right\}.
%$$
%\end{definition}
\begin{remark}
{\rm We may refer $D_L(x)$ as $D(x)$ for convenience.}
\end{remark}

Another useful bound is the following, see Lemma 6.2 in \cite{Invofsym}.
\begin{lemma}
For every $x\in S^{n-1}$ and every $L\ge 1$, one has
$$
D_L(x)\ge \frac{1}{\V x\V_{\infty}} 
$$
\end{lemma}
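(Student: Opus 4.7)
The plan is to argue by contrapositive: show that every $\theta \in (0, 1/\V x\V_\infty)$ fails the defining inequality $\dist(\theta x, \Z^n) < L\sqrt{\log_+(\theta/L)}$, which will force $D_L(x) \ge 1/\V x\V_\infty$. A preliminary simplification is that for $\theta \le L$ one has $\log_+(\theta/L) = 0$, so the defining inequality $\dist < 0$ is automatically impossible; the infimum in Definition \ref{LCDdef1} is therefore effectively over $\theta > L$, and I may restrict attention to $L < \theta < 1/\V x\V_\infty$.

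In this range each coordinate satisfies $|\theta x_k| \le \theta \V x\V_\infty < 1$. Because the $\ell^2$-distance from $\theta x$ to $\Z^n$ is achieved coordinate-wise, the nearest integer vector $p$ lies in $\{-1,0,1\}^n$, with $p_k \ne 0$ exactly when $|\theta x_k| > 1/2$. I would first handle the sub-regime $\theta \V x\V_\infty \le 1/2$, where $p = 0$ and consequently $\dist(\theta x, \Z^n) = \V \theta x \V_2 = \theta$. The required inequality then reduces to $\theta \ge L\sqrt{\log_+(\theta/L)}$, equivalently $t^2 \ge \log_+(t)$ for $t = \theta/L$. This one-variable inequality is elementary: the function $f(t) = t^2 - \log t$ has derivative $2t - 1/t$ vanishing only at $t = 1/\sqrt{2}$, where $f(1/\sqrt{2}) = (1 + \log 2)/2 > 0$, so $f > 0$ on all of $(0, \infty)$. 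This already delivers the weaker bound $D_L(x) \ge 1/(2\V x\V_\infty)$.

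To promote the constant to $1$, the remaining regime $1/(2\V x\V_\infty) < \theta < 1/\V x\V_\infty$ must be analyzed. Here the set $S := \{k : |\theta x_k| > 1/2\}$ is nonempty and $p_k = \mathrm{sign}(x_k)$ on $S$; expanding the sum over coordinates $k \in S$ and $k \notin S$ yields
$$\dist(\theta x, \Z^n)^2 = \theta^2 + |S| - 2\theta \sum_{k \in S}|x_k|.$$
The useful inputs are $\sum_{k\in S} x_k^2 \le 1$ (which, combined with $|x_k| > 1/(2\theta)$ on $S$, forces $|S| \le 4\theta^2$) together with the Cauchy--Schwarz bound $\sum_{k\in S} |x_k| \le \sqrt{|S|\cdot\sum_{k\in S}x_k^2} \le \sqrt{|S|}$, which deliver $\dist(\theta x, \Z^n) \ge |\theta - \sqrt{|S|}|$. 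I expect this last regime to be the main obstacle: one must leverage the integrality of $|S|$ along with the two-sided pinching $1/(2\theta) < |x_k| < 1/\theta$ on $S$ to upgrade the lower bound past $L\sqrt{\log(\theta/L)}$, and this is where the sharp constant $1$ (as opposed to the easy $1/2$) would have to be won.
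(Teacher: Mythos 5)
Your computation in the sub-regime $\theta \V x\V_\infty \le 1/2$ is correct and, as you suspect, it is actually the \emph{complete} proof --- of what the lemma should say. The statement as printed has dropped a factor of $2$: Lemma~6.2 of \cite{Invofsym}, which this lemma is quoting, asserts $D_L(x) \ge \frac{1}{2\V x\V_\infty}$, and your chain $\dist(\theta x, \Z^n) = \V \theta x\V_2 = \theta$ followed by the one-variable inequality $t^2 \ge \log_{+} t$ is precisely Vershynin's argument. The ``remaining regime'' you flagged as the main obstacle is not a gap in your proof but a gap in the statement.

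Indeed, the bound with constant $1$ is false. Take $n = 2$, $x = (1/\sqrt{2},\, 1/\sqrt{2})$, $L = 1$, so $1/\V x\V_\infty = \sqrt{2}$. At $\theta = 1.2$ the point $\theta x \approx (0.849, 0.849)$ has nearest lattice point $(1,1)$, giving $\dist(\theta x, \Z^2) = \sqrt{2}\,(1 - 1.2/\sqrt{2}) \approx 0.214$, while $L\sqrt{\log_{+}(\theta/L)} = \sqrt{\ln 1.2} \approx 0.427$; the defining inequality is met and $D_1(x) \le 1.2 < \sqrt{2}$. Structurally, whenever $x/\V x\V_\infty$ is an integer vector (a normalized sign vector) with support of size exceeding $L^2$, $\theta x$ lands in $\Z^n$ exactly at $\theta = 1/\V x\V_\infty > L$, so for $\theta$ just below this value the distance collapses while the threshold $L\sqrt{\log_{+}(\theta/L)}$ stays bounded away from zero, forcing $D_L(x) < 1/\V x\V_\infty$. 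Your identity $\dist(\theta x, \Z^n)^2 = \theta^2 + |S| - 2\theta\sum_{k\in S}|x_k|$ and the consequence $\dist(\theta x, \Z^n) \ge |\theta - \sqrt{|S|}|$ are algebraically correct, but there is no integrality trick that rescues the constant $1$; stop at $\frac{1}{2\V x\V_\infty}$.
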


Now we can try to express the small ball probabilities of sums $\L (S,\vv)$ in terms of $D_L(x)$. This was done in the following theorem, see Theorem 6.3 in \cite{Invofsym}. 

\begin{theorem}\label{sblcd}{\rm (Small ball probabilities via LCD)}. Let $\xi_1,\cdots,\xi_n$ be independent and identically distributed random variables. Assume that there exist numbers $\vv_0, p_0,M_1 >0$ such that $\L (\xi_k,\vv_0)\le 1- p_0$ and $\E |\xi_k| \le M_1$ for all $k$. Then there exists $C_{6,3}$ which depends only on $\vv_0$, $p_0$ and $M_1$, and such that the following holds. Let $x\in S^{n-1}$ and consider the sum $S=\sum_{k=1}^{n} x_k \xi_{k}$. Then for every $L\ge p_0^{-1/2}$ and $\vv \ge 0$ one has
$$
\L(S,\vv) \le C_{\ref{sblcd}} L \left( \vv + \frac{1}{D_L(x)}  \right)
$$
for some constant $C_{\ref{sblcd}}$ depending only on second and fourth moments of $\xi$.
\end{theorem}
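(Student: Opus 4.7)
The plan is to follow the Halász--Esseen route, essentially reproducing the proof of Theorem~6.3 in \cite{Invofsym} while keeping track of the dependence of the constant on $\vv_0$, $p_0$, and $M_1$. The starting point is the standard Esseen bound
\[
\L(S,\vv) \;\le\; C\vv \int_{-1/\vv}^{1/\vv} |\phi_S(t)|\,dt,
\qquad \phi_S(t) = \prod_{k=1}^n \phi_\xi(tx_k),
\]
which reduces the problem to controlling each characteristic factor $|\phi_\xi(s)|$. Symmetrizing via an independent copy $\xi'$ turns $|\phi_\xi(s)|^2$ into $\E\cos(s(\xi-\xi'))$, and the elementary inequality $|z|\le \exp\bigl(-(1-|z|)/2\bigr)$ lets us pass freely to the exponential scale.

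Next I would convert the hypothesis $\L(\xi,\vv_0)\le 1-p_0$ into a quantitative bound of the form
\[
1 - |\phi_\xi(s)|^2 \;\ge\; c\, p_0\, \E\min\bigl(1,\, s^2 \tau^2\bigr),
\]
where $\tau$ is a truncation of the symmetrized variable $\xi-\xi'$; the assumption $\E|\xi|\le M_1$ is what guarantees that this truncation captures a definite fraction of the mass and keeps $\tau$ bounded below on a set of probability bounded away from $0$. Taking products over $k$ and exponentiating yields
\[
|\phi_S(t)|^2 \;\le\; \E_\tau \exp\Bigl(-c\, p_0 \sum_{k=1}^n \min\bigl(1,\, t^2 x_k^2\tau^2\bigr)\Bigr),
\]
and a routine comparison shows that the sum in the exponent dominates $\dist(t\tau x,\Z^n)^2$ up to an absolute constant, once coordinates with $|tx_k\tau|\ge 1/2$ are absorbed into the factor~$1$.

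In the final step I invoke the LCD. For $|t|$ in the range $[L,\,c D_L(x))$, by the definition of $D_L(x)$ one has $\dist(\alpha t x,\Z^n) \ge L\sqrt{\log_+(\alpha t/L)}$ for each $\alpha$ in the support of $\tau$, yielding $|\phi_S(t)|\le \exp\bigl(-c p_0 L^2 \log_+(t/L)\bigr)$ after averaging in $\tau$. The parametrization $L\ge p_0^{-1/2}$ is precisely tuned so that $p_0 L^2\ge 1$, making the integral over $|t|\ge L$ convergent and of order $L/D_L(x)$; combining this with the trivial bound $L$ on the integral over $|t|\le L$ reassembles into $C L(\vv + 1/D_L(x))$ after the prefactor $\vv$ from Esseen. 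The main obstacle is the middle step: extracting a pointwise characteristic-function bound from the purely qualitative anticoncentration hypothesis, uniformly in the scale of $s$, and verifying that the resulting constant collapses cleanly into a single $C_{\ref{sblcd}}$ depending only on $\vv_0$, $p_0$, and $M_1$ (so that in the sparse application $L = (\delta_0 p)^{-1/2}$ one does not pick up stray factors of $p$).
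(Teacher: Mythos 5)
The paper does not supply a proof of this theorem: it is quoted directly from Vershynin's \cite{Invofsym} (Theorem 6.3 there), with the statement also going back to the Rudelson--Vershynin LCD framework of \cite{LOandInvofRM}. Your Esseen--Hal\'asz sketch --- symmetrize the characteristic function, convert the hypothesis $\L(\xi,\vv_0)\le 1-p_0$ into a pointwise bound $|\phi_\xi(s)|^2\le\exp\bigl(-c\,p_0\,\dist(\cdot,2\pi\Z)^2\bigr)$ on the symmetrized variable, and then invoke the definition of $D_L(x)$ to control the integral over $|t|\ge L$ --- is exactly the argument of the cited reference, so the approaches coincide.
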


Applying the above theorem to the sparse vector, one may get following theorem for sparse vector, see Proposition 4.2 in \cite{InvofsparsenonHerm}.
\begin{theorem}\label{sblcd2}{\rm (Small ball probabilities via LCD)}. Let $S\in \R^n$ be a random vector with i.i.d. coordinates of the form $S_j=\delta_j \xi_j$, where $\P (\delta_j =1) =p$, and $\xi_j$s are random variables with unit variance, and finite fourth moment, which are independent of $\delta_j$. Then for any $v\in S^{n-1}$, $L=(\delta p)^{-1/2}$ and $\delta < \delta_0$
$$
\L\Big(\sum_{j=1}^n S_j v_j, \sqrt{p} \vv \Big) \le C_{\ref{sblcd2}}  \left( \vv + \frac{1}{\sqrt{p}D_{L}(v)}  \right)
$$
for some constant $C_{\ref{sblcd2}}, \delta_0$ depending only on fourth moments of $\xi_j$.
\end{theorem}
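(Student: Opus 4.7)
The plan is to obtain Theorem \ref{sblcd2} as a direct specialization of Theorem \ref{sblcd}, treating each sparse product $S_j=\delta_j\xi_j$ as a single i.i.d.\ random variable and then rescaling the argument of the Levy function by $\sqrt p$ to absorb the sparsity factor.

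The first step is to verify the hypotheses of Theorem \ref{sblcd} for the variables $S_j$. For the anti-concentration hypothesis, Remark \ref{delta0} gives $\L(\delta_j\xi_j,\vv)\le 1-\delta_0 p$ for all $\vv<\vv_0'$, so we may take $p_0=\delta_0 p$ and $\vv_0=\vv_0'$ in Theorem \ref{sblcd}. For the first moment bound, Cauchy--Schwarz gives $\E|\delta_j\xi_j|\le p\cdot(\E\xi_j^2)^{1/2}=p\le 1$, so $M_1=1$ works uniformly. The only parameter constraint in Theorem \ref{sblcd} is $L\ge p_0^{-1/2}=(\delta_0 p)^{-1/2}$; since we impose $\delta<\delta_0$, the choice $L=(\delta p)^{-1/2}$ satisfies $L\ge(\delta_0 p)^{-1/2}$ and is therefore admissible.

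With these choices, Theorem \ref{sblcd} applied to $\tilde S=\sum_j v_j S_j$ yields, for every $\eta\ge 0$,
\[
\L(\tilde S,\eta)\;\le\;C_{\ref{sblcd}}\,L\Bigl(\eta+\tfrac{1}{D_L(v)}\Bigr)
\;=\;\tfrac{C_{\ref{sblcd}}}{\sqrt{\delta p}}\Bigl(\eta+\tfrac{1}{D_L(v)}\Bigr).
\]
Substituting $\eta=\sqrt p\,\vv$ and distributing the prefactor,
\[
\L\!\Bigl(\textstyle\sum_j v_j S_j,\;\sqrt p\,\vv\Bigr)\;\le\;\frac{C_{\ref{sblcd}}}{\sqrt\delta}\Bigl(\vv+\tfrac{1}{\sqrt p\,D_L(v)}\Bigr),
\]
which is the claimed estimate once the factor $1/\sqrt\delta$ is absorbed into the constant $C_{\ref{sblcd2}}$ (which is permitted because $\delta<\delta_0$ is a fixed constant depending only on the fourth moment of $\xi$, via Remark \ref{delta0}).

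There is essentially no serious obstacle: the content lies in recognising that $\delta_j\xi_j$ already satisfies the input hypothesis of Theorem \ref{sblcd} with an \emph{explicit} sparsity-dependent parameter $p_0=\delta_0 p$, and that the parametrisation $L=(\delta p)^{-1/2}$ is precisely calibrated so that $L p_0^{1/2}$ is an absolute constant. The only minor point to track carefully is that the application of Theorem \ref{sblcd} produces a prefactor $L=(\delta p)^{-1/2}$, and it is only after choosing the scale of the Levy window to be $\sqrt p\,\vv$ (rather than $\vv$) that this prefactor is cancelled to leave the clean $\vv+(\sqrt p\,D_L(v))^{-1}$ form.
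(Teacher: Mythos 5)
Your proof is correct and matches the paper's indicated approach of deducing Theorem \ref{sblcd2} directly from Theorem \ref{sblcd}, using the anti-concentration input $\L(\delta_j\xi_j,\vv)\le 1-\delta_0 p$ from Remark \ref{delta0}, the admissibility of $L=(\delta p)^{-1/2}\ge p_0^{-1/2}$, and the window rescaling $\eta=\sqrt p\,\vv$ to cancel the $L$ prefactor. One small accounting point: the constant you produce is $C_{\ref{sblcd}}/\sqrt{\delta}$ and therefore depends on the particular $\delta<\delta_0$ chosen, which is harmless for the paper's downstream applications (where $\delta$ is fixed to $\delta_0$) but worth noting since the theorem's statement nominally asserts $C_{\ref{sblcd2}}$ depends only on the fourth moment of $\xi_j$.
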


\subsection{Regularized LCD}
As we discussed, the distance problem reduces to a quadratic Littlewood-Offord problem. Similar to \cite{Invofsym}, we want to the use the same technique to reduce the quadratic problem to a linear one by decoupling and conditioning arguments. This process requires a more robust version of the concept of the LCD, which R. Vershynin developed in \cite{Invofsym}. 

\begin{definition}\label{rLCDdef1}{\rm (Regularized LCD)}. Let $\lambda \in (0,c_{oo})$ and $L\ge 1$. We define the regularized LCD of a vector $x\in \incomp(c_sn,c_d)$ as
$$
\hat{D}_L(x,\lambda) = \max \left\{
D_L(x_I / \V x_I \V_2) : I \subset{\rm spread}(x), |I| = \lambda n
\right\}.
$$
Denote by $I(x)$ the maximizing set $I$ in this definition
\end{definition}
%\begin{definition}\label{rLCDdef2}
%We define $\hat{D}_{\delta}(x,\lambda)$ as  $\hat{D}_L(x,\lambda)$  with $L=(\delta p)^{-1/2}$.
%\end{definition}
\begin{remark}{\rm 
Since the sets $I$ in this definition are subsets of ${\rm spread}(x)$, inequality are subsets of ${\rm spread}(x)$, inequalities (\ref{reg}) imply that }
$$
c \sqrt{\lambda} \le \V x_{I} \V_2 \le C \sqrt{\lambda}
$$
{\rm where $c=c_d/\sqrt{2}$ and $C=1/\sqrt{c_s }$.}
\end{remark}

We also have the following estimate for regularized LCD, see Lemma 6.8 in \cite{Invofsym}. 

\begin{lemma}\label{lbrlcd}
For every $x\in \incomp(c_sn,c_d)$ and every $\lambda \in (0,c_{oo})$ and $L\ge 1$, one has 
$$
\hat{D}_L(x,\lambda) \ge c_{\ref{lbrlcd}} \sqrt{\lambda n}
$$
where $ c_{\ref{lbrlcd}}$ depends only on $c_s$ and $c_d$.
\end{lemma}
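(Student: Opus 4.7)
The plan is to exploit the bound $D_L(y) \ge 1/\V y \V_\infty$ mentioned just before the statement, combined with the quantitative spread property from Lemma \ref{incompsp}. Since $\hat{D}_L(x,\lambda)$ is defined as a maximum over admissible subsets $I \subset {\rm spread}(x)$ of size $\lambda n$, it suffices to exhibit one such $I$ whose restricted, normalized vector has a small $\ell_\infty$ norm.

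Concretely, I would pick any $I \subset {\rm spread}(x)$ with $|I| = \lceil \lambda n \rceil$; this is possible because $\lambda < c_{oo}$ and $|{\rm spread}(x)| = \lceil c_{oo} n \rceil$. By Lemma \ref{incompsp} applied to the coordinates $k \in {\rm spread}(x)$, every such $k$ satisfies
$$
\frac{c_d}{\sqrt{2n}} \le |x_k| \le \frac{1}{\sqrt{c_s n}}.
$$
Summing the squares of the lower bound over $I$ gives
$$
\V x_I \V_2 \ge \sqrt{|I|} \cdot \frac{c_d}{\sqrt{2n}} \ge \frac{c_d}{\sqrt{2}} \sqrt{\lambda},
$$
while the $\ell_\infty$ norm is controlled by the upper bound $\V x_I \V_\infty \le 1/\sqrt{c_s n}$. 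Therefore the normalized vector $y := x_I/\V x_I \V_2$ satisfies
$$
\V y \V_\infty = \frac{\V x_I \V_\infty}{\V x_I \V_2} \le \frac{1/\sqrt{c_s n}}{(c_d/\sqrt{2})\sqrt{\lambda}} = \frac{\sqrt{2}}{c_d \sqrt{c_s}} \cdot \frac{1}{\sqrt{\lambda n}}.
$$

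Applying the inequality $D_L(y) \ge 1/\V y \V_\infty$ to this particular $y$ then gives
$$
\hat{D}_L(x,\lambda) \ge D_L(y) \ge \frac{c_d \sqrt{c_s}}{\sqrt{2}} \sqrt{\lambda n},
$$
so one can take $c_{\ref{lbrlcd}} = c_d \sqrt{c_s}/\sqrt{2}$, which indeed depends only on $c_s$ and $c_d$. There is essentially no serious obstacle here: the statement reduces to a one-line computation once one writes out what ``spread'' means quantitatively and recalls the elementary $D_L \ge 1/\V \cdot \V_\infty$ bound. The only mild point to watch is that one must verify that $I$ of size $\lceil \lambda n \rceil$ fits inside ${\rm spread}(x)$, which is guaranteed by the assumption $\lambda \in (0, c_{oo})$ together with the cardinality specification $|{\rm spread}(x)| = \lceil c_{oo} n \rceil$.
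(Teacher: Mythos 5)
Your proof is correct and takes the same route as the cited source (Lemma 6.8 in Vershynin's paper); the present paper states the lemma by citation only and does not re-prove it. Your argument -- restrict to an arbitrary $I\subset{\rm spread}(x)$ of size $\lambda n$, use the spread inequalities from Lemma~\ref{incompsp} to bound $\V x_I\V_2$ from below and $\V x_I\V_\infty$ from above, then invoke $D_L(y)\ge 1/\V y\V_\infty$ for the normalized vector -- is exactly the intended computation, and the resulting constant $c_d\sqrt{c_s}/\sqrt{2}$ depends only on $c_s,c_d$ as required.
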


We now state a version of Theorem \ref{sblcd} for regularized LCD, see Proposition 6.9 in \cite{Invofsym}.

\begin{theorem}\label{sbrlcd}{\rm (Small ball probabilities via regularized LCD)}. Let $\xi_1,\cdots,\xi_n$ be independent and identically distributed random variables. Assume that there exist numbers $\vv_0, p_0,M_1 >0$ such that $\L (\xi_k,\vv_0)\le 1- p_0$ and $\E |\xi_k| \le M_1$ for all $k$. Then there exists $C_{\ref{sbrlcd}}$ which depends only on $\vv_0$, $p_0$ and $M_1$, and such that the following holds. 

Consider a vector $x\in \incomp{(c_sn,c_d)}$ and a subset $J\subseteq [n]$ such that $J\supseteq I(x)$. Consider also $S_J=\sum_{k\in J} x_k \xi_{k}$. Then for every $\lambda \in (0,c_{oo})$ and $L\ge p_0^{-1/2}$ and $\vv \ge 0$, one has
$$
\L(S_J,\vv) \le C_{\ref{sbrlcd}} L \left( \frac{\vv}{\sqrt{\lambda}} + \frac{1}{\hat{D}_L(x,\lambda)}  \right).
$$
\end{theorem}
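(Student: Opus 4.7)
The plan is to reduce the claim to the linear Littlewood--Offord estimate in Theorem \ref{sblcd} applied on the distinguished coordinate block $I=I(x)$ that realizes the regularized LCD. Three ingredients will be combined: (i) the monotonicity of the L\'evy concentration function under independent perturbation, (ii) the scaling identity $\L(cZ,\vv)=\L(Z,\vv/|c|)$, and (iii) the two-sided estimate $c\sqrt{\lambda}\le \V x_I\V_2\le C\sqrt{\lambda}$ recorded in the remark following Definition \ref{rLCDdef1}.

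First, write $S_J=S_{I}+S_{J\setminus I}$ where $I=I(x)\subseteq J$. Since the $\xi_k$ are independent, so are $S_I$ and $S_{J\setminus I}$, and therefore
\[
\L(S_J,\vv)\ \le\ \L(S_I,\vv)
\]
by the standard anti-concentration inequality $\L(X+Y,\vv)\le\L(X,\vv)$ for independent $X,Y$. This lets us forget about the coordinates of $x$ outside $I$, which is precisely why the definition of $\hat{D}_L$ is robust.

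Next, factor out the norm: set $y=x_I/\V x_I\V_2\in S^{I-1}$, so that $S_I=\V x_I\V_2\sum_{k\in I}y_k\xi_k$. By the scaling property of L\'evy's concentration function,
\[
\L(S_I,\vv)\ =\ \L\!\Bigl(\sum_{k\in I} y_k\xi_k,\ \vv/\V x_I\V_2\Bigr).
\]
Now $y$ is a \emph{unit} vector (in $\R^I$), so Theorem \ref{sblcd} applies directly with parameter $L\ge p_0^{-1/2}$, yielding
\[
\L\!\Bigl(\sum_{k\in I} y_k\xi_k,\ \vv/\V x_I\V_2\Bigr)\ \le\ C_{\ref{sblcd}}\,L\,\Bigl(\tfrac{\vv}{\V x_I\V_2}+\tfrac{1}{D_L(y)}\Bigr).
\]
By the very definition of $I=I(x)$ we have $D_L(y)=\hat{D}_L(x,\lambda)$, and by the remark after Definition \ref{rLCDdef1} we have $\V x_I\V_2\ge c_d\sqrt{\lambda}/\sqrt{2}$. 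Absorbing this constant into $C_{\ref{sbrlcd}}$ produces the desired inequality
\[
\L(S_J,\vv)\ \le\ C_{\ref{sbrlcd}}\,L\,\Bigl(\tfrac{\vv}{\sqrt{\lambda}}+\tfrac{1}{\hat{D}_L(x,\lambda)}\Bigr).
\]

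There is no real obstacle here beyond bookkeeping: the whole argument is a three-line reduction, and the only reason to record it separately is to show how the spread-set mechanism built into $\hat{D}_L$ interacts transparently with anti-concentration. The role played by Incompressibility (via the lower bound on $\V x_I\V_2$) is what prevents the factor $1/\sqrt{\lambda}$ on $\vv$ from being a loss one cannot afford in the later distance estimates.
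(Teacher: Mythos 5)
Your argument is correct and is exactly the standard reduction the paper is referencing when it cites Proposition 6.9 of \cite{Invofsym}: drop coordinates outside $I(x)$ via the anti-concentration monotonicity $\L(X+Y,\vv)\le\L(X,\vv)$, rescale by $\V x_{I(x)}\V_2$, apply Theorem \ref{sblcd} to the unit vector $x_{I(x)}/\V x_{I(x)}\V_2$, identify $D_L$ of that unit vector with $\hat D_L(x,\lambda)$, and absorb the lower bound $\V x_{I(x)}\V_2\ge (c_d/\sqrt 2)\sqrt\lambda$ into the constant. The only bookkeeping point worth flagging is that this last step makes $C_{\ref{sbrlcd}}$ implicitly depend also on $c_d$ (hence on the parameters fixing the incompressibility class), not literally only on $\vv_0,p_0,M_1$; this is the same harmless abuse as in the cited source.
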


Similarly, we can rewrite it for sparse random sums. 
\begin{theorem}\label{sbrlcd2}
Let $S\in \R^n$ be a random vector with i.i.d. coordinates of the form $S_j=\delta_j \xi_j$, where $\P (\delta_j =1) =p$, and $\xi_j$s are random variables with unit variance, and finite fourth moment, which are independent of $\delta_j$. Consider a vector $x\in \incomp{(c_sn,c_d)}$ and a subset $J\subseteq [n]$ such that $J\supseteq I(x)$.  Then for every $\lambda \in (0,c_{oo})$, $v\in S^{n-1}$, $L=(\delta p)^{-1/2}$ and $\delta < \delta_0$
$$
\L\Big(\sum_{j=1}^n S_j v_j, \sqrt{p} \vv \Big) \le C_{\ref{sbrlcd2}} \left( \frac{\vv}{\sqrt{\lambda}} + \frac{1}{\sqrt{p}\hat{D}_{L}(x,\lambda)}  \right)
$$
for some constant $C_{\ref{sbrlcd2}}, \delta_0$ depending only on fourth moments of $\xi_j$.
\end{theorem}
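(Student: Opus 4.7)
\textbf{Proof plan for Theorem \ref{sbrlcd2}.} The plan is to mirror the proof of Theorem \ref{sbrlcd} (the dense version) but feed in the sparse linear Littlewood--Offord estimate of Theorem \ref{sblcd2} in place of its dense counterpart Theorem \ref{sblcd}. The regularization of the LCD is set up precisely so that, after localizing to the maximizing set $I(x)\subseteq J$, the restricted and renormalized vector has an \emph{ordinary} LCD equal to $\hat D_L(x,\lambda)$; this is where the proof of the regularized bound always reduces to the un-regularized one.

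\textbf{Step 1: reduce to the maximizing coordinate set.} Let $I=I(x)\subseteq J\subseteq[n]$ be the set achieving the maximum in Definition \ref{rLCDdef1}, so $|I|=\lambda n$. Since the coordinates $S_j$ are independent, conditioning on $\{S_j:j\in[n]\setminus I\}$ turns the sum $\sum_{j} S_j x_j$ into a random vector shifted by a deterministic quantity; by the translation invariance built into the Levy concentration function,
\[
\L\Bigl(\sum_{j} S_j x_j,\sqrt{p}\,\vv\Bigr)\ \le\ \L\Bigl(\sum_{j\in I} S_j x_j,\sqrt{p}\,\vv\Bigr).
\]
(This is also where the hypothesis $J\supseteq I(x)$ is used: whether one starts the sum over all of $[n]$ or over $J$, the conditioning argument yields the same reduction.)

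\textbf{Step 2: renormalize on $I$ and apply the sparse linear Littlewood--Offord estimate.} Put $y:=x_I/\V x_I\V_2\in S^{|I|-1}\subset\R^I$. Then
\[
\sum_{j\in I} S_j x_j\ =\ \V x_I\V_2\sum_{j\in I}S_j y_j,
\]
and by the remark following Definition \ref{rLCDdef1} we have $c\sqrt{\lambda}\le\V x_I\V_2\le C\sqrt{\lambda}$ with $c,C$ depending only on $c_s,c_d$. Scaling inside the concentration function and applying Theorem \ref{sblcd2} to $y$ (with the same parametrization $L=(\delta p)^{-1/2}$, $\delta<\delta_0$), one obtains
\[
\L\Bigl(\sum_{j\in I} S_j y_j,\ \sqrt{p}\,\tfrac{\vv}{\V x_I\V_2}\Bigr)\ \le\ C_{\ref{sblcd2}}\Bigl(\tfrac{\vv}{\V x_I\V_2}+\tfrac{1}{\sqrt{p}\,D_L(y)}\Bigr).
\]

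\textbf{Step 3: identify $D_L(y)$ with the regularized LCD and collect constants.} By the very definition of the regularized LCD one has $D_L(y)=\hat D_L(x,\lambda)$. Substituting the bound $\V x_I\V_2\ge c\sqrt{\lambda}$ gives
\[
\L\Bigl(\sum_{j} S_j x_j,\sqrt{p}\,\vv\Bigr)\ \le\ C_{\ref{sblcd2}}\Bigl(\tfrac{\vv}{c\sqrt{\lambda}}+\tfrac{1}{\sqrt{p}\,\hat D_L(x,\lambda)}\Bigr),
\]
which absorbs into the desired inequality with $C_{\ref{sbrlcd2}}:=C_{\ref{sblcd2}}/c$, depending only on the fourth moment of $\xi_j$ (through $C_{\ref{sblcd2}}$) and on $c_s,c_d$ (through the lower bound on $\V x_I\V_2$).

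\textbf{Expected obstacle.} The argument is essentially a bookkeeping reduction, so there is no serious analytic obstacle; the only subtle point is that the sparse Littlewood--Offord estimate of Theorem \ref{sblcd2} is only stated for $L=(\delta p)^{-1/2}$ with $\delta<\delta_0$, so one must check that this is exactly the parametrization prescribed in Theorem \ref{sbrlcd2}, which it is. A secondary point worth checking carefully is the scaling $\vv\mapsto\vv/\V x_I\V_2$ inside the concentration function: one must verify that the ensuing $\vv/\V x_I\V_2$ is still in the regime where Theorem \ref{sblcd2} applies, which follows from $\vv\ge 0$ being arbitrary there.
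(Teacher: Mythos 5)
Your proposal is correct and is exactly the argument the paper intends: the paper omits this proof, saying only ``Similarly, we can rewrite it for sparse random sums'' after the dense Theorem~\ref{sbrlcd}, and the intended adaptation is precisely your reduction --- condition to the maximizing set $I(x)\subseteq J$, renormalize $x_I$, apply the sparse linear Littlewood--Offord estimate Theorem~\ref{sblcd2}, and use $\V x_I\V_2\gtrsim\sqrt{\lambda}$ from the remark after Definition~\ref{rLCDdef1}. You also read the statement's typo correctly (the $v\in S^{n-1}$ should be $x$, with the sum over $J$) and flag the benign additional dependence of $C_{\ref{sbrlcd2}}$ on $c_s,c_d$, which the paper leaves implicit.
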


By Theorem \ref{tens}, one has the following proposition as a corollary, see Proposition 6.11 in \cite{Invofsym}:
\begin{proposition}\label{sbasrlcd}{\rm  (Small ball probabilities for $Ax$ via regularized LCD.)} Let $A$ be a random symmetric matrix with mean zero variance one and fourth moment $M_4^4$ i.i.d. entries above diagonal. Let $x\in \incomp(c_sn,c_d)$ and $\lambda\in (0,c_{oo})$. Then for every $L\ge L_0$ and $\vv\ge 0$, one has
$$
\L (Ax,\vv \sqrt{n}) \le \left[
\frac{C_{\ref{sbasrlcd}}L\vv}{\sqrt{\lambda }} + \frac{C_{\ref{sbasrlcd}}L}{\hat{D}_L(x,\lambda)}
\right]^{n-\lambda n}.
$$

Here $C_{\ref{sbasrlcd}}$ and $L_0$ depend only on the parameters $M_4$.
\end{proposition}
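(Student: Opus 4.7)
The plan is to exploit the symmetry of $A$ to reduce the vector-valued small ball problem to $(1-\lambda)n$ independent scalar small-ball problems, apply Theorem \ref{sbrlcd} to each, and close the argument by tensorization. Let $I:=I(x)\subset{\rm spread}(x)$, so $|I|=\lambda n$, and set $J:=[n]\setminus I$. For each $i\in J$, split the $i$-th coordinate of $Ax$ as
\[
(Ax)_i \;=\; Y_i + c_i,\qquad Y_i:=\sum_{j\in I}A_{ij}x_j,\qquad c_i:=\sum_{j\in J}A_{ij}x_j.
\]
Conditioning on the $J\times J$ block $(A_{ij})_{i,j\in J}$ freezes every $c_i$, and the remaining randomness in $(Y_i)_{i\in J}$ lives in the rectangular array $(A_{ij})_{i\in J,\,j\in I}$, whose entries are completely untouched by the symmetry constraint (their symmetric reflections lie in the disjoint $I\times J$ block). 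In particular, for a fixed $i\in J$ the entries $(A_{ij})_{j\in I}$ are i.i.d., and across different $i\in J$ the variables $Y_i$ are mutually independent.

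Next, apply Theorem \ref{sbrlcd} to a single $Y_i$ with the subset chosen to be $I(x)$ itself (which trivially contains $I(x)$). The moment hypotheses are met: Lemma \ref{tsbd} yields $\L(A_{ij},\vv_0)\le 1-p_0$ for some $\vv_0,p_0>0$ depending only on $M_4$, while $\E|A_{ij}|\le M_4$ follows from the fourth-moment bound. Setting $L_0:=p_0^{-1/2}$, we obtain, for every $L\ge L_0$ and $t\ge 0$,
\[
\L(Y_i,t)\;\le\;CL\left(\frac{t}{\sqrt{\lambda}}+\frac{1}{\hat{D}_L(x,\lambda)}\right).
\]

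Finally, since projection onto a subset of coordinates and translation by constants can only decrease the Levy concentration, $\L(Ax,\tau)\le \L\bigl((Y_i)_{i\in J},\tau\bigr)$ for all $\tau\ge 0$. Apply Remark \ref{tensremark} (the $a\vv+b$ form of the tensorization lemma) to the independent family $(Y_i)_{i\in J}$ of length $(1-\lambda)n$ with $a=CL/\sqrt{\lambda}$ and $b=CL/\hat{D}_L(x,\lambda)$. Writing $\vv\sqrt{n}=(\vv/\sqrt{1-\lambda})\sqrt{(1-\lambda)n}$ and using $\lambda\le c_{oo}\le 1/4$ to absorb $\sqrt{1-\lambda}\ge\sqrt{3/4}$ into the universal constant yields the claimed bound. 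Since the estimate does not depend on the conditioning, it holds unconditionally.

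The main subtlety is spotting the right block of genuine independence: the rectangle of entries connecting rows in $J$ to columns in $I$ contains no two entries related by symmetry, which is precisely what permits tensorization despite the a priori dependencies of the symmetric model. Everything else is a direct assembly of Theorem \ref{sbrlcd} with Remark \ref{tensremark}.
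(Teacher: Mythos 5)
Your proof is correct and is essentially the argument the paper implicitly invokes (it simply cites Theorem~\ref{tens} and Proposition~6.11 of \cite{Invofsym} without writing out the details). You correctly identify the key point: projecting onto rows in $J=[n]\setminus I(x)$ and summing only over columns in $I(x)$ isolates the rectangular $J\times I$ block, whose entries are free of the symmetry constraint and are mutually independent across rows; conditioning on the $J\times J$ block freezes the $c_i$, the scalar estimate of Theorem~\ref{sbrlcd} applies to each $Y_i$ with $J$ chosen as $I(x)$ itself, and Remark~\ref{tensremark} tensorizes over the $(1-\lambda)n$ independent coordinates, with the harmless $\sqrt{1-\lambda}$ rescaling absorbed into the constant. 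The moment hypotheses are verified correctly via Lemma~\ref{tsbd} and the unconditional conclusion follows because the bound is uniform in the conditioning. This is the same route as the cited source; no gap.
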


It can be easily derived as a corollary that for $A$ is a sparse matrix, we have the following result:
\begin{proposition}\label{sbasrlcd2}{\rm (Small ball probabilities for $Ax$ via regularized LCD where $A$ is sparse.)} Let $A$ be a random matrix satisfies Assumption \ref{assump}. Let $x\in \incomp(c_sn,c_d)$ and $\lambda\in (0,c_{oo})$. Then one has for $L=(\delta p)^{-1/2}$ and $\delta<\delta_0$
$$
\L (Ax,\vv \sqrt{pn}) \le \left[ 
\frac{C_{\ref{sbasrlcd2}}\vv}{\sqrt{\lambda }} + \frac{C_{\ref{sbasrlcd2}}}{\sqrt{p}\hat{D}_{L}(x,\lambda)}
\right]^{n-\lambda n}.
$$

Here $C_{\ref{sbasrlcd}}, \delta_0$  depends only on the parameters $M_4$.
\end{proposition}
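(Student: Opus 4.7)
My plan is to follow the proof of the dense analogue Proposition \ref{sbasrlcd}, substituting the sparse single-sum bound Theorem \ref{sblcd2} for Theorem \ref{sbrlcd} at the per-row step. First I would fix $x \in \incomp(c_s n, c_d)$ and $\lambda \in (0, c_{oo})$, let $I := I(x)$ so that $|I| = \lambda n$ and $I \subseteq {\rm spread}(x)$, and set $w := x_I / \V x_I \V_2$, viewed as a unit vector in $\R^I$. By the remark following Definition \ref{rLCDdef1}, $\V x_I \V_2 \asymp \sqrt{\lambda}$ with constants depending only on $c_s, c_d$, and by the very definition of $I(x)$ one has $D_L(w) = \hat{D}_L(x, \lambda)$.

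To circumvent the symmetry of $A$, I would restrict attention to the $n - \lambda n$ coordinates of $Ax$ indexed by $I^c := [n] \setminus I$. The key structural observation is that the off-diagonal block $\{a_{ij} : i \in I^c,\, j \in I\}$ contains no symmetric pair of entries, and it is jointly independent of the two diagonal blocks $\{a_{kl} : k, l \in I\}$ and $\{a_{kl} : k, l \in I^c\}$, which together determine every remaining entry of $A$ through symmetry. Conditioning on these two diagonal blocks, each coordinate decomposes as
$$(Ax)_i \;=\; c_i + Y_i, \qquad Y_i := \sum_{j \in I} a_{ij}\, x_j, \quad i \in I^c,$$
with $c_i$ a deterministic constant and the family $(Y_i)_{i \in I^c}$ jointly independent.

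For the per-coordinate bound I would write $Y_i = \V x_I \V_2 \sum_{j \in I} a_{ij} w_j$ and apply Theorem \ref{sblcd2} to the normalized sum (with $L = (\delta p)^{-1/2}$ and $\delta < \delta_0$). Combined with translation invariance of the Levy concentration function, this yields
$$\L\bigl((Ax)_i,\, t\bigr) \;\le\; C\left[\frac{t}{\sqrt{p\lambda}} + \frac{1}{\sqrt{p}\,\hat{D}_L(x,\lambda)}\right]$$
for every $t \ge 0$, with $C$ depending only on $M_4$. This is precisely the hypothesis of Remark \ref{tensremark}, which I would apply over the $n - \lambda n$ independent coordinates indexed by $I^c$. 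Choosing $t$ so that $t\sqrt{n - \lambda n} = \vv \sqrt{pn}$ (these are comparable up to absolute constants since $\lambda < c_{oo} \le 1/4$) produces
$$\L\Bigl(\bigl((Ax)_i\bigr)_{i \in I^c},\, \vv \sqrt{pn}\Bigr) \;\le\; \left[\frac{C' \vv}{\sqrt{\lambda}} + \frac{C'}{\sqrt{p}\,\hat{D}_L(x,\lambda)}\right]^{n - \lambda n}.$$
Since passing to a subvector only increases $\L$, this is also an upper bound on $\L(Ax, \vv \sqrt{pn})$, establishing the proposition.

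The principal obstacle is exactly the correlation between rows of $A$ introduced by symmetry via $a_{ij} = a_{ji}$. The resolution lies in the joint choice of the row index set $I^c$ together with the conditioning on the two diagonal blocks, so that what remains is a genuinely independent rectangular block against which tensorization applies; this is what forces the exponent $n - \lambda n$ rather than $n$ in the final bound. The only other point requiring mild care is scale-matching the dense $\sqrt{n}$ against the sparse $\sqrt{pn}$, but this is clean because Theorem \ref{sblcd2} already builds $\sqrt{p}$ into its small-ball scale, and the $\sqrt{\lambda}$ in the denominator arises simply from $\V x_I \V_2 \asymp \sqrt{\lambda}$ when rescaling $Y_i$ to a unit sum.
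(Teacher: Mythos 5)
Your proof is correct and follows exactly the route the paper intends (the paper states the proposition without proof, referring to the dense analogue Proposition \ref{sbasrlcd} and its source, Proposition 6.11 of \cite{Invofsym}): condition on the two diagonal blocks to break symmetry, observe that the remaining off-diagonal block makes $(Y_i)_{i\in I^c}$ jointly independent, apply the sparse linear Littlewood--Offord bound Theorem \ref{sblcd2} to $w = x_I/\V x_I\V_2$ using $D_L(w)=\hat D_L(x,\lambda)$, and tensorize via Remark \ref{tensremark}, absorbing the $\sqrt{1-\lambda}$ and $\V x_I\V_2\asymp\sqrt\lambda$ factors into the constant. The scale-matching $t\sqrt{n-\lambda n}=\vv\sqrt{pn}$ and the monotonicity of $\L$ under projection to the subvector $(Ax)_{I^c}$ are both handled correctly.
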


\section{Estimating additive structure}\label{SecStruc}
To estimate the small ball probability for quadratic form $\langle A^{-1} X, X \rangle$, we will first need to estimate the additive structure in the random vector $A^{-1}X$. In this section, we will show that the regularized LCD of $A^{-1}X$ is large for every fixed $X$ which is an analog of Theorem 7.1 in \cite{Invofsym} for sparse matrices.

%\begin{theorem}\label{st}(Structure theorem).
%Let $A$ be an $n\times n $ symmetric matrix with upper triangular part are i.i.d. entries. There exist $c_{\ref{st}}>0,L_0\ge 1$ that depend only on the parameters $C_{op}$ and $M_4$ from assumptions, and such that the following holds. Let $u\in \R^n$ be an arbitrary fixed vector, and consider $x_0:=A^{-1}u/\V A^{-1}u \V_2$. Let $L\ge L_0$ and $n^{-c_{\ref{st}}}\le \lambda \le c_{oo}/3$. Consider the event
%$$
%\e =\left\{ x_0 \in \incomp(c_0n,c_1){\rm \ and \ } \hat{D}_L(x_0,\lambda)\ge L^{-2} n^{c_{\ref{st}}/\lambda} \right\}
%$$
%Then
%$$
%\P (\e^c\cap \{\V A \V\le C_{op} \sqrt{n} \}) \le 2 e^{-c_{\ref{st}}n}
%$$
%\end{theorem}
\begin{theorem}\label{stsp}{\rm (Structure theorem for sparse matrix.)}
Let $A$ be a random matrix which satisfies Assumption \ref{assump} and $p \ge n^{-c_p}$.  Let $u\in \R^n$ be an arbitrary fixed vector, and consider $x_0:=A^{-1}u/\V A^{-1}u \V_2$. Let $n^{c_{\ref{stsp}}n/6}p^{-1/2}\ge L=(p\delta)^{-1/2} \ge (p\delta_0)^{-1/2}$, $p\ge n^{-c_p}$ and $n^{-c_{\ref{stsp}}}\le \lambda \le c_{\ref{stsp}}/4$. Consider the event
$$
\e =\left\{ x_0 \in \incomp(c_sn,c_d){\rm \ and \ } \hat{D}_L(x_0,\lambda)
\ge L^{-2} n^{c_{\ref{stsp}}/\lambda} \right\}
$$
Then
$$
\P (\e^c\cap \e_{op}) \le 2 e^{-c_{\ref{stsp}}'pn}.
$$
Here $c_p, c_{\ref{stsp}},c_{\ref{stsp}}', \delta_0 >0$ depend only on the parameters $C_{op}$ and $M_4$.
\end{theorem}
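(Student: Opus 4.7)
The approach is to split $\e^c \cap \e_{op}$ into the compressible case and the incompressible--with--small--LCD case, and treat each by a net argument combined with the small-ball estimates of Sections \ref{SecIoverC}--\ref{SecIoverInc}. A preliminary reduction uses the defining identity $Ax_0 = u/\V A^{-1}u\V_2$: on $\e_{op}$ the right-hand side lies in $\mathrm{span}(u) \cap C_{op}\sqrt{pn}\,B_2^n$, a one-dimensional interval that can be covered by a net $\M_1$ of cardinality polynomial in $n$. The bad event is thus contained in $\{\exists\ \text{bad}\ y \in S^{n-1},\ \exists v \in \M_1 : \V Ay - v\V_2 \le \text{small}\}$, to which union bounds apply.

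The compressible case is direct: applying Theorem \ref{sbcomp} to each $v \in \M_1$ and union bounding over $\M_1$ yields $\P(x_0 \in \comp(c_sn,c_d) \cap \e_{op}) \le |\M_1|\cdot 2\exp(-c_{\ref{sbcomp}}pn) \le \exp(-c' pn)$. For the main case, I would dyadically decompose the bad LCD range $[c_{\ref{lbrlcd}}\sqrt{\lambda n}, D]$ with $D := L^{-2}n^{c_{\ref{stsp}}/\lambda}$ into shells $[D_k, 2D_k]$ and, for each $k$, build a net $\N_k$ of incompressible vectors whose regularized LCD lies in this shell. The construction exploits Definition \ref{rLCDdef1}: for each of the $\binom{n}{\lambda n}$ choices of index set $I$ of size $\lambda n$, the constraint $D_L(y_I/\V y_I\V_2) \le 2D_k$ forces this unit vector to lie within $L\sqrt{\log(2D_k/L)}/D_k$ of one of at most $(CD_k/\sqrt{\lambda n})^{\lambda n}$ lattice directions in $\Z^I$, while the complementary coordinates are handled by a coarser $\epsilon$-net. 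For each $y \in \N_k$, since $\hat{D}_L(y,\lambda) \ge D_k$, Proposition \ref{sbasrlcd2} gives
$$
\L(Ay,\ \epsilon\sqrt{pn}) \le \left(\frac{C\epsilon}{\sqrt{\lambda}} + \frac{C}{\sqrt{p}\,D_k}\right)^{(1-\lambda)n}.
$$

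The hard part will be the parameter calibration so that the product $|\N_k|\cdot|\M_1|\cdot(\text{small-ball bound})\cdot O(\log D)$ stays below $\exp(-c'pn)$ at every dyadic scale, the worst case being $D_k \sim D$. Sparsity introduces two new obstacles compared to the dense setting of \cite{Invofsym}: the factor $1/\sqrt{p}$ in the small-ball estimate, and the target probability being $\exp(-c'pn)$ rather than $\exp(-c'n)$. These force the restriction $p \ge n^{-c_p}$ with $c_p$ small relative to $c_{\ref{stsp}}$, so that the $(C/(\sqrt{p}D_k))^{(1-\lambda)n}$ factor can still absorb both the combinatorial entropy $\binom{n}{\lambda n} \approx e^{\lambda n \log(e/\lambda)}$ and the LCD-induced count $(CD_k/\sqrt{\lambda n})^{\lambda n}$; the exponent $c_{\ref{stsp}}/\lambda$ appearing in $D$ is calibrated precisely so that this balance holds uniformly for $\lambda \in [n^{-c_{\ref{stsp}}}, c_{\ref{stsp}}/4]$. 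Exploiting that $\hat{D}_L$ is a \emph{maximum} over admissible $I$ (and not merely an existence statement) provides the extra slack needed to treat the free $y_{I^c}$-coordinates without losing control. The upper restriction $L \le n^{c_{\ref{stsp}}n/6}p^{-1/2}$ ensures the $L\sqrt{\log(2D_k/L)}$ tolerance in the LCD definition does not inflate the lattice net beyond what the small-ball bound can counter. Summing the resulting bounds over $k$ yields the advertised estimate $\P(\e^c \cap \e_{op}) \le 2\exp(-c_{\ref{stsp}}'pn)$.
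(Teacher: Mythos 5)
Your proposal follows essentially the same route as the paper's proof, which (a) splits $\e^c$ into the compressible case (handled by Lemma~\ref{proincomp}, itself an immediate consequence of Theorem~\ref{sbcomp}) and the incompressible--small--LCD case, (b) reduces the target of $Ax_0$ to a one-dimensional interval $\span(u)\cap C_{op}\sqrt{pn}\,B_2^n$ and takes a net $\M$, and (c) invokes Lemma~\ref{sbsublevel}, whose proof is exactly the dyadic shell decomposition of $S_D$ plus the covering Lemma~\ref{cvsublevel} plus the small-ball estimate of Proposition~\ref{sbasrlcd2} that you describe. You also correctly identify the $1/\sqrt{p}$ factor in the small-ball bound as the source of the restriction $p\ge n^{-c_p}$: the net-size-times-small-ball product carries a factor $L^n=(\delta p)^{-n/2}$, and absorbing it into the $(\lambda n)^{-c_{\ref{cvsublevel}}n}$ gain from the covering lemma is what pins down $c_p$.

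One small imprecision: the interval net $\M$ in the incompressible case cannot be taken of cardinality polynomial in $n$. Its mesh is $\beta_0\sqrt{pn}$ with $\beta_0\sim L\sqrt{\log D_0}/D_0$ and $D_0=L^{-2}n^{c_{\ref{stsp}}/\lambda}$, so $|\M|\le D_0$, which for $\lambda$ near $n^{-c_{\ref{stsp}}}$ is superpolynomial. This does not damage the argument because the sublevel-set bound $n^{-c_{\ref{sbsublevel}}'n}$ from Lemma~\ref{sbsublevel} absorbs $D_0$ with room to spare, but calling $|\M_1|$ polynomial would break a more naive accounting. Also note that what you are re-deriving ``by hand'' in the middle of your sketch (the lattice-direction count over subsets $I$ of size $\lambda n$) is the content of Lemma~\ref{cvsublevel}, which the paper imports verbatim from \cite{Invofsym} rather than reproving; citing it directly, as the paper does, keeps the argument cleaner. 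Aside from these points your plan matches the paper's.
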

\begin{remark}
{\rm Theorem \ref{stsp} is the step that $p\ge n^{-c_p}$ is needed. To improve Theorem \ref{mth}, one just need to improve Theorem \ref{stsp} to work for a greater range of $p$. }
\end{remark}

We shall first prove the easier part that $x_0\in \incomp(c_s n,c_d)$ w.h.p.. The more difficult part of the theorem is the estimate on the LCD.

\begin{lemma}\label{proincomp}{\rm ($A^{-1}u$ is incompressible.) }
In the setting of Theorem \ref{stsp}, consider the event 
$$
\e_{1}=\left\{ x_0 \in \incomp(c_s n,c_d) \right\}
$$
Then
$$
\P (\e_1^c \cap \e_{op}) \le 2 \exp(-c_{\ref{proincomp}}pn)
$$
Here  $c_{\ref{proincomp}}$ depends only on the parameters $C_{op}$ and $M_4$.
\end{lemma}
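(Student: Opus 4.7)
The plan is to reduce Lemma \ref{proincomp} to the compressible small-ball estimate of Theorem \ref{sbcomp} via a short net argument on a single scalar, mirroring the analogous step in Vershynin's dense symmetric argument \cite{Invofsym}. Without loss of generality assume $u \neq 0$ and rescale so that $\V u \V_2 = 1$; the case $u = 0$ makes $x_0$ undefined and can be discarded by convention. On the event $\e_1^c \cap \e_{op}$, the vector $A^{-1}u$ is defined and the unit vector $x_0 = A^{-1}u/\V A^{-1}u \V_2$ lies in $\comp(c_s n, c_d)$ and satisfies
$$
A x_0 = \mu u, \qquad \mu := \frac{1}{\V A^{-1}u \V_2} > 0.
$$
On $\e_{op}$ we have $\mu = \V A x_0 \V_2 \le \V A \V \le C_{op}\sqrt{pn}$, so $\mu$ is pinned to the bounded interval $[0,\, C_{op}\sqrt{pn}]$.

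Next I would discretize this interval by a $(c'_{\ref{sbcomp}}\sqrt{pn}/2)$-net $\M$ of cardinality at most $1 + 2 C_{op}/c'_{\ref{sbcomp}}$, which depends only on $C_{op}$ and $M_4$. Picking $\mu_0 \in \M$ with $|\mu - \mu_0| \le c'_{\ref{sbcomp}}\sqrt{pn}/2$ yields
$$
\V A x_0 - \mu_0 u \V_2 \;=\; |\mu - \mu_0| \;\le\; \tfrac12\, c'_{\ref{sbcomp}}\, \sqrt{pn},
$$
while $\V x_0 \V_2 = 1$ and $x_0/\V x_0 \V_2 \in \comp(c_s n, c_d)$. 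Hence the event $\e_1^c \cap \e_{op}$ is contained in the union over $\mu_0 \in \M$ of the events appearing in Theorem \ref{sbcomp} with the deterministic target vector $\mu_0 u$ (here $\mu_0 u$ is fixed, not random, because $u$ is fixed and $\M$ is a deterministic net).

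A union bound then gives
$$
\P(\e_1^c \cap \e_{op}) \;\le\; |\M| \cdot 2\exp(-c_{\ref{sbcomp}}\, p n) \;\le\; 2\exp(-c_{\ref{proincomp}}\, pn),
$$
after slightly shrinking the constant to absorb the $O(1)$ factor $|\M|$ into the exponent. The argument is essentially mechanical: the only thing to verify is that the scalar $\mu$ constraining $A x_0$ lives in an $O(\sqrt{pn})$ interval on $\e_{op}$, which is precisely why the net can be chosen of $O(1)$ size and a single union bound suffices. There is no substantive obstacle here, since Theorem \ref{sbcomp} already does all the heavy lifting on the compressible set; the harder regularized-LCD portion of Theorem \ref{stsp} will be attacked separately.
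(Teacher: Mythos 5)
Your proof is correct and rests on the same key input (Theorem \ref{sbcomp}), but it contains a redundant step. Theorem \ref{sbcomp} is stated as a bound on
$\inf_{x/\V x\V_2 \in \comp(c_sn,c_d)} \V Ax - u\V_2 / \V x\V_2$, i.e.\ the target deviation is already normalized by $\V x\V_2$ and the infimum ranges over vectors of arbitrary length whose direction is compressible. Consequently, on $\e_1^c$ one simply takes $x = A^{-1}u$ (so $Ax = u$ exactly), notes $x/\V x\V_2 \in \comp(c_sn,c_d)$, and observes $\V Ax - u\V_2/\V x\V_2 = 0 \le c_{\ref{sbcomp}}'\sqrt{pn}$; this places $\e_1^c \cap \e_{op}$ directly inside the event of Theorem \ref{sbcomp}, and the $2\exp(-c_{\ref{sbcomp}}pn)$ bound follows with no further work. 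Your rescaling to $\V x_0\V_2 = 1$, the identification $Ax_0 = \mu u$, and the $O(1)$-size net over the scalar $\mu \in [0, C_{op}\sqrt{pn}]$ are all sound, but they re-derive a reduction that is already absorbed into the $1/\V x\V_2$ normalization inside Theorem \ref{sbcomp} (indeed, that theorem's own proof performs exactly the one-dimensional net you build, when passing from Theorem \ref{mthcomp}). So you lose nothing by the detour, but you also gain nothing; the inclusion is immediate once the form of Theorem \ref{sbcomp} is read carefully.
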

\begin{proof}
Denote $x=A^{-1}u$, then $Ax=u$. Hence
$$
\e_1^c \subseteq \left\{ \exists x\in \R^n : \frac{x}{\V x\V_2}
\in \comp(c_sn,c_d) \wedge Ax=u
\right\}
$$
By Proposition \ref{sbcomp}, $\P (\e_1^c \cap \e_{op}) \le 2\exp(-c_{\ref{sbcomp}}np) $ .
\end{proof}

Following the strategy in \cite{Invofsym}, to get the structure theorem, we also need a special entropy estimate. This is done in Proposition 7.4 of \cite{Invofsym}. To state the result, we need the following definition first.

\begin{definition}\label{slLCD}{\rm (Sublevel sets of LCD)}.
Let us fix $\lambda \in (0,c_{oo})$. For every value $D\ge 1$, we define the set
$$
S_D=\left\{ x\in \incomp(c_s n,c_d): \hat{D}_L(x,\lambda )\le D \right\}
$$
\end{definition}

Then recall following covering Lemma, see Proposition 7.4 in \cite{Invofsym}.

\begin{lemma}\label{cvsublevel}{\rm (Covering sublevelsets of regularized LCD)}. Let $\lambda \in (C_{\ref{cvsublevel}}/n,c_{oo})$ and $L\ge 1$. For every $D\ge 1$, the sublevel set $S_D$ has a $\beta -$net $\N$ such that 
$$
\beta = \frac{L\sqrt{\log D}}{\sqrt{\lambda }D },\ 
|\N| \le \left[ \frac{C_{\ref{cvsublevel}}D}{(\lambda n)^{c_{\ref{cvsublevel}}}} \right]^n D^{1/\lambda }
$$
where $C_{\ref{cvsublevel}},c_{\ref{cvsublevel}}$ depend only on $c_s,c_d$. More precisely, $c_{\ref{cvsublevel}}=c_{oo}/4$.
\end{lemma}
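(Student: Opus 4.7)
The plan is to parameterize each $x \in S_D$ by the data $(I, \theta, p, x_{I^c})$ extracted from the definition of the regularized LCD, and then build $\N$ by discretizing each of these four pieces separately. Concretely, by Definition \ref{rLCDdef1}, for every $x \in S_D$ there is a subset $I = I(x) \subset {\rm spread}(x)$ of size $\lambda n$ with $D_L(x_I/\V x_I \V_2) \le D$. Unfolding Definition \ref{LCDdef1} then produces $\theta = \theta(x) \in (L, D]$ and a lattice point $p = p(x) \in \Z^I$ with $\V \theta (x_I/\V x_I \V_2) - p \V_2 < L\sqrt{\log_+(\theta/L)} \le L\sqrt{\log D}$, so that $x_I/\V x_I \V_2$ lies within $\ell^2$-distance $L\sqrt{\log D}/\theta$ of the rational point $p/\theta$; moreover $\V x_I \V_2$ is confined to $[c\sqrt{\lambda}, C\sqrt{\lambda}]$ by the property of ${\rm spread}(x)$, so its discretization costs only $O(1/\beta)$ extra values, which I absorb into constants.

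I would then count admissible quadruples in order. First, the number of index sets $I$ is at most $\binom{n}{\lambda n} \le (e/\lambda)^{\lambda n}$. For a fixed $I$, the lattice point $p$ satisfies $\V p \V_2 \le \theta + L\sqrt{\log D} \le 2D$, and a standard volume argument, together with the lower bound $D \ge \hat{D}_L(x,\lambda) \ge c_{\ref{lbrlcd}}\sqrt{\lambda n}$ from Lemma \ref{lbrlcd}, gives at most $(CD/\sqrt{\lambda n})^{\lambda n}$ admissible $p$. The scaling $\theta$ is placed on a grid in $[L,D]$ whose spacing is tuned so that the induced perturbation in $p/\theta$ stays below the target resolution $\beta$; a short check shows that $D^{1/\lambda}$ such grid points suffice, and this is precisely the origin of the $D^{1/\lambda}$ factor in the final bound. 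Finally, the tail $x_{I^c}$ lies in the unit ball of $\R^{n-\lambda n}$ and is covered by a standard Euclidean $\beta$-net of cardinality $(C/\beta)^{n-\lambda n}$. Multiplying these four counts and inserting $\beta = L\sqrt{\log D}/(\sqrt{\lambda}\,D)$ yields, after collecting the powers of $D$, $\lambda$, $n$, and $L$, the stated bound $[C_{\ref{cvsublevel}} D/(\lambda n)^{c_{\ref{cvsublevel}}}]^n D^{1/\lambda}$.

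The main obstacle is producing the factor $(\lambda n)^{-c_{\ref{cvsublevel}} n}$ with $c_{\ref{cvsublevel}} = c_{oo}/4$ \emph{uniformly} in $\lambda$. The naive lattice count alone only gives a saving of order $(\lambda n)^{-\lambda n/2}$, whose exponent degenerates as $\lambda \to 0$; to convert this into a $\lambda$-independent power of $n$, one has to exploit the incompressibility constraint $I(x) \subseteq {\rm spread}(x)$, which forces the coordinates of $x_I$ to have magnitude of order $n^{-1/2}$ and thereby restricts the admissible lattice directions $p/\V p\V_2$ to a controlled subset. Playing this geometric restriction against the $(e/\lambda)^{\lambda n}$ entropy of choices for $I$, and absorbing the $L$ and $\sqrt{\log D}$ factors into $C_{\ref{cvsublevel}}$ rather than the exponent, is what effectively trades $\lambda n/2$ for $c_{oo} n/4$ in the exponent. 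Everything else in the argument is standard volumetric $\varepsilon$-net bookkeeping.
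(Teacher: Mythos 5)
The paper does not prove this lemma; it is quoted verbatim from Vershynin (Proposition~7.4 of \cite{Invofsym}), so there is no in-paper argument to compare against. Judged on its own terms, your sketch sets up the right parameterization (choice of $I$, a denominator $\theta$, a lattice point $p$, the tail $x_{I^c}$), but the bookkeeping does not reach the stated bound, and the proposed repair does not close the gap.

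Concretely: the only place your count produces a negative power of $n$ is the lattice enumeration $(CD/\sqrt{\lambda n})^{\lambda n}$, which gives a factor $(\lambda n)^{-\lambda n/2}$, while the lemma requires $(\lambda n)^{-c_{oo} n/4}$. These match only when $\lambda \ge c_{oo}/2$; for $\lambda < c_{oo}/2$ (which is exactly the regime the lemma is invoked in, since the structure theorem uses $\lambda$ down to $n^{-c_{\ref{stsp}}}$), the exponent $\lambda n/2$ is strictly weaker than $c_{oo}n/4$, and the extra $\lambda$-powers coming from $\binom{n}{\lambda n}$ and from $1/\beta$ are powers of $\lambda$, not of $n$, so they cannot substitute for a genuine power of $n$. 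Your diagnosis that ``the naive lattice count only gives $(\lambda n)^{-\lambda n/2}$'' is correct, but the proposed fix --- restricting the admissible directions $p/\Vert p\Vert_2$ using the spread condition --- is already built into that count: the bound $\vert p_k\vert \lesssim D/\sqrt{\lambda n}$ on each coordinate is precisely the spread constraint, and a straight volume count of integer points in $\Vert p\Vert_2 \le 2D$ gives the same $(CD/\sqrt{\lambda n})^{\lambda n}$; there is nothing more to harvest there. The missing saving must come from elsewhere, and the natural candidate is the tail: you cover $x_{I^c}$ by a generic $\beta$-net of the unit ball, $(C/\beta)^{n-\lambda n}$, ignoring that the roughly $c_{oo}n - \lambda n$ coordinates of $\mathrm{spread}(x)\setminus I$ are themselves confined to magnitude $O(n^{-1/2})$. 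Exploiting that constraint on the non-$I$ spread coordinates --- not on $p$ --- is where one should look for a power of $n$ with exponent of order $c_{oo}n$, independent of $\lambda$. A secondary issue is the $D^{1/\lambda}$ factor: the assertion that a grid for $\theta$ on $[L,D]$ of size $D^{1/\lambda}$ suffices is stated but not derived, and a straightforward perturbation estimate gives a grid that is polynomial in $D$, not exponential in $1/\lambda$, so that factor's origin also needs to be explained. You should work through Vershynin's Proposition~7.4 directly; the decomposition there is more careful than a four-way product count.
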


%\begin{remark}
%{\rm Definition \ref{slLCD} and Lemma \ref{cvsublevel} can be easily rewritten in terms of $\hat{D}_\delta$ with $L= (\delta p)^{-1/2}$. }
%\end{remark}

\begin{remark}
{\rm The dominating term in the net size is the term $(\lambda n)^c$. However, once we adapt this cardinality estimate in the sparse case, the $(\lambda n)^{-cn}$ term need to dominate $p^n$, this end up with a limitation of the sparsity level $p$ in our proof.} 
\end{remark}

In Proposition \ref{sbcomp}, we estimated the small ball probabilities for the random vector $Ax$ for a fixed vector $x$. Now we combine it with Lemma \ref{cvsublevel} to obtain a bound that is uniform over all $x$ with small regularized LCD.  

\begin{lemma}\label{sbsublevel} {\rm (Small ball probabilities on a sublevel set of LCD.)} There exist $ \delta_0, c_{\ref{sbsublevel}},c_{\ref{sbsublevel}}, c_p$  depend only on $C_{op}$ and $M_4$, and such that the following hold. Let $n^{c_{\ref{sbsublevel}}n/6}p^{-1/2}\ge L=(p\delta)^{-1/2} \ge (p\delta_0)^{-1/2}$, $n^{-c_{\ref{sbsublevel}}}\le \lambda \le c_{\ref{sbsublevel}}/4$, $p\ge n^{-c_p}$ and $1\le D\le (L)^{-2}n^{c_{\ref{sbsublevel}}/\lambda}$. Then 
$$
\P \Big\{ \exists x\in S_D: \V Ax -u \V_2 \le C_{op}\beta \sqrt{pn} \wedge \mathcal{E}_{op} \Big\} \le n^{-c_{\ref{sbsublevel}}'n}
$$
where 
$$
\beta = \frac{L\sqrt{\log(2D)}}{\sqrt{\lambda }D}.
$$

\end{lemma}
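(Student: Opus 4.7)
The plan is a standard net-and-union-bound argument in the spirit of Proposition~8.2 of \cite{Invofsym}, adapted to the sparse setting. First I would invoke Lemma~\ref{cvsublevel} to produce a $\beta$-net $\N_D \subseteq S_D$ with
$$
\beta = \frac{L\sqrt{\log(2D)}}{\sqrt{\lambda}\,D}, \qquad |\N_D| \le \Big[\frac{C_{\ref{cvsublevel}}\,D}{(\lambda n)^{c_{\ref{cvsublevel}}}}\Big]^n D^{1/\lambda}.
$$
On the event $\e_{op}$, any $x \in S_D$ with $\V Ax - u \V_2 \le C_{op}\beta\sqrt{pn}$ admits a net point $y$ with $\V x - y\V_2 \le \beta$, and the triangle inequality together with $\V A \V \le C_{op}\sqrt{pn}$ gives $\V Ay - u\V_2 \le 2C_{op}\beta\sqrt{pn}$. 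Hence the event under consideration is contained in $\bigcup_{y\in \N_D}\{\V Ay - u\V_2 \le 2C_{op}\beta\sqrt{pn}\}$.

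For each fixed $y \in \N_D \subseteq \incomp(c_s n, c_d)$, I would combine Proposition~\ref{sbasrlcd2} with the universal lower bound $\hat{D}_L(y,\lambda) \ge c_{\ref{lbrlcd}}\sqrt{\lambda n}$ from Lemma~\ref{lbrlcd}, together with the translation invariance of $\L$, to obtain
$$
\P\big(\V Ay - u\V_2 \le 2C_{op}\beta\sqrt{pn}\big) \le \Big[\frac{C_1\, L\sqrt{\log(2D)}}{\lambda\,D} + \frac{C_1}{\sqrt{p\lambda n}}\Big]^{(1-\lambda)n}.
$$
A union bound over $\N_D$ then multiplies this by $|\N_D|$, and the problem reduces to verifying that the resulting product is at most $n^{-c'_{\ref{sbsublevel}}n}$ uniformly over the stated parameter ranges.

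The hard part will be the bookkeeping in this final union bound. The key observation is that the factor $D^n$ from $|\N_D|$ cancels against $D^{-(1-\lambda)n}$ from the first bracket term, leaving only $D^{\lambda n + 1/\lambda}$. Because $D \le L^{-2} n^{c_{\ref{sbsublevel}}/\lambda}$ and $\lambda \ge n^{-c_{\ref{sbsublevel}}}$, this residual factor is absorbed by the $(\lambda n)^{-c_{\ref{cvsublevel}} n}$ in $|\N_D|$, provided $c_{\ref{sbsublevel}}$ is taken small compared to $c_{\ref{cvsublevel}}$. In the complementary regime where the second bracket term $(p\lambda n)^{-1/2}$ dominates, the sparsity hypothesis $p \ge n^{-c_p}$ together with $\lambda \ge n^{-c_{\ref{sbsublevel}}}$ makes that term a negative power of $n$, and the same $(\lambda n)^{-c_{\ref{cvsublevel}} n}$ factor absorbs the remaining net-size contributions $D^n D^{1/\lambda}$. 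This case analysis is precisely the step that forces the restriction $p \ge n^{-c_p}$ onto the proof, consistent with the remark following Theorem~\ref{stsp}. The constants $c_{\ref{sbsublevel}}$, $c'_{\ref{sbsublevel}}$ and the admissible $c_p$ should be chosen small enough in terms of $c_{\ref{cvsublevel}}$, $C_{op}$, and $M_4$ to close both cases simultaneously.
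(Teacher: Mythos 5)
The approach has a genuine gap, and the source of it is the choice of lower bound on the regularized LCD. You invoke the universal bound $\hat{D}_L(y,\lambda)\ge c_{\ref{lbrlcd}}\sqrt{\lambda n}$ for every net point $y\in S_D$, which yields the second bracket term $C_1/\sqrt{p\lambda n}$. But this term carries no factor of $D^{-1}$, so when it dominates nothing cancels the $D^n$ in $|\N_D|$. This regime is unavoidable: since $D$ may range up to $L^{-2}n^{c_{\ref{sbsublevel}}/\lambda}$ and $\lambda$ may be as small as $n^{-c_{\ref{sbsublevel}}}$, one can have $D^n \asymp n^{cn/\lambda}$, which is $n^{c n^{1+c_{\ref{sbsublevel}}}}$ in the worst case, while $(\lambda n)^{-c_{\ref{cvsublevel}} n}$ and $(p\lambda n)^{-(1-\lambda)n/2}$ are only $n^{-O(n)}$. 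The product therefore explodes, and no choice of $c_{\ref{sbsublevel}},c_p$ small can salvage it; the claim that "$(\lambda n)^{-c_{\ref{cvsublevel}} n}$ absorbs the remaining net-size contributions $D^n D^{1/\lambda}$" is simply false because the exponent $n/\lambda$ from $D^n$ can be much larger than $n$.

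The paper's proof avoids this by never applying the bound on all of $S_D$ at once. It first estimates the probability over the annulus $S_D\setminus S_{D/2}$, where one additionally has the \emph{matching lower bound} $\hat{D}_L(x,\lambda)>D/2$. Plugging this into Proposition \ref{sbasrlcd2} replaces your $1/\sqrt{p\lambda n}$ by $2/(\sqrt{p}\,D)$, so both bracket terms scale like $D^{-1}$; the $D^{-(1-\lambda)n}$ from the small-ball bound then cancels the $D^n$ from the net, leaving only the tame residual $D^{\lambda n+1/\lambda}$. (Moreover, in that annulus the first bracket term always dominates, so there is in fact no second case to analyze.) The full sublevel set $S_D$ is then recovered by a dyadic decomposition $S_D=\bigcup_{k\ge 0} S_{2^{-k}D}\setminus S_{2^{-k-1}D}$, terminating at $2^{-k_0}D\approx c_{\ref{lbrlcd}}\sqrt{\lambda n}$ since lower sublevel sets are empty by Lemma \ref{lbrlcd}; the at most $\log_2 D$ layers cost only a polynomial factor. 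This dyadic layering is the missing idea in your proposal, and you should incorporate it before the union bound rather than rely on the universal LCD lower bound. As a side note, the way $p\ge n^{-c_p}$ actually enters the paper's computation is through the factor $L^n=(p\delta)^{-n/2}$ that appears after expanding the bracket, not through your Case 2.
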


\begin{proof} 
In this proof, the sparsity would play an important role. Unlike the non-sparse case in proof of Lemma 7.9 in \cite{Invofsym}. This proof would only work when $p$ is relatively large. And this is the reason we have to force some assumption for our main theorem of the paper.

We start with estimating the probability for $S_D/S_{D/2}$ instead of $S_D$. Proposition \ref{sbasrlcd2} implies that for every $s\in S_D\backslash S_{D/2}$, 
$$
\P \left\{ \V Ax - u\V_2 \le \vv \sqrt{pn} \right\}
\le \left[
\frac{C_{\ref{sbasrlcd2}}\vv }{\sqrt{\lambda }} + \frac{C_{\ref{sbasrlcd2}}}{\sqrt{p}D}
\right]^{n-\lambda n}, \ \vv \ge 0.
$$
Now we apply this for $\vv =2C_{op} \beta$. Since $\frac{\vv }{\sqrt{\lambda}}$ dominates  $\frac{1}{\sqrt{p}D}$, we have
$$
\P \left\{ \V Ax - u\V_2 \le 2C_{op} 
\beta \sqrt{pn} \right\}
\le  \left[
\frac{C L\sqrt{\log (2D)}}{\lambda D} 
\right]^{n-\lambda n}=:p_0
$$
where $C$ depend only on $M_4,C_{op}$. Now, choose a $\beta-$ net $\N$ of $S_D\backslash S_{D/2}$ according to Lemma \ref{cvsublevel}. We have

\begin{equation}
\begin{array}{rl}
     &  \P \left\{ \exists x \in \N: \V Ax -u \V_2 \le C_{op} \beta \sqrt{n}     \right\} \le |\N| p_0\\
     \le & \displaystyle\left[ \frac{C_{\ref{cvsublevel}}D}{(\lambda n)^{c_{\ref{cvsublevel}}}} \right]^n D^{1/\lambda} \left[
\displaystyle\frac{C L \sqrt{\log (2D)}}{\lambda D} 
\right]^{n-\lambda n}=:p_1.
\end{array}
\end{equation}
To estimate $p_1$, notice that  $n$ is sufficiently large, $n^{-c}\le \lambda \le c_{\ref{sbsublevel}}/4$ and $1\le D \le L^{-2} n^{c/\lambda}$. By choosing $c$ small enough, we have
\begin{equation}
\begin{array}{rl}
    p_1 & \le  \displaystyle C^n D^{\lambda n+ 1/\lambda}
    (\lambda n)^{-c_{\ref{cvsublevel}}n} L^n \lambda^{-n}
    ( \sqrt{\log (2D)} )^{n} \\
     & \le  \displaystyle C^n n^{2c n+ 1/\lambda^2}
     n^{-c_{\ref{cvsublevel}}n/2} L^n \lambda^{-n}
    ( c\log n /\lambda )^{n} \\
     & \le  \displaystyle 
     n^{-c_{\ref{cvsublevel}}n/3} L^n .    \\
\end{array}
\end{equation}
Choosing the constant $c_p$ sufficient small and we obtain 
$$
p_1\le n^{-c' n}
$$
where $c'$ depend only on $M_4,C_{op}$.
Assume event $\e_{op}$ hold and there exists $x\in S_D\backslash S_{D/2}$ such that $\V Ax -u \V_2 \le C_{op}\beta \sqrt{n}$. Then there exists $x_0\in \N$ such that $\V x - x_0 \V_2 \le \beta $. Therefore

\begin{equation}
\begin{array}{rl}
\V A x_0 -u \V_2 & \le \V Ax -u \V_2 +\V A(x-x_0) \V_2  \le 
\V Ax -u\V_2 +\V A \V \V x -x_0 \V_2\\
     & \le 2C_{op} \beta \sqrt{pn}.
\end{array}
\end{equation}
The probability of the later event is bounded by $p_1 \le n^{-c'n}$. So we have 
$$
\P \left\{
\exists x\in S_D\backslash S_{D/2}: \V Ax-u\V_2 \le C_{op} \beta \sqrt{pn} \wedge \e_{op}
\right\}\le n^{-c'n}.
$$
To remove $S_{D/2}$ in this bound, we divide it into level sets. Since $\beta$ decreases in $D$, the previous result can be applied for $D/2$ instead of $D$ if $D\ge 2$. Therefore
$$
\P \left\{
\exists x\in S_{D/2}\backslash S_{D/4}: \V Ax-u\V_2 \le C_{op} \beta \sqrt{pn} \wedge \e_{op}
\right\}\le n^{-c'n}.
$$
We can continue defining such sets for $S_{D/4}\backslash S_{D/8}$ and so on. On the other hand, $S = \bigcup_{k=0}^{k_0} (S_{2^{-k}D})$, where $k_0$ is the largest integer such that $2^{-k_0}D \ge c_{\ref{lbrlcd}}\sqrt{\lambda n}$. By Proposition \ref{lbrlcd},  $S_{D_0}$ is empty set if $D_0 < c_{\ref{lbrlcd}}\sqrt{\lambda n}$. Since $c_{\ref{lbrlcd}}\sqrt{\lambda n} \ge 1$, we have $k_0 \le \log_2 (D)$. Therefore
$$
\P \left\{
\exists x \in S_D: \V Ax - u\V_2 \le K\beta \sqrt{pn} \wedge \e_{op}
\right\}
 \le \log_2 (D) n^{-c'n} \le n^{c''n}
$$
if the constant $c''$ is chosen appropriately small. 

\end{proof}

\begin{proof}[Proof of Theorem \ref{stsp}]
This is a direct analog of proof of Theorem 7.1 in \cite{Invofsym}. We now fix constants $ \delta_0, c_{\ref{sbsublevel}},c_{\ref{sbsublevel}}, c_p$ in Lemma \ref{sbsublevel}. Define
$$
\e_0 =\left\{
\hat{D}_L(x_0,\lambda) > L^{-2} n^{c_{\ref{sbsublevel}}/\lambda}=:D_0
{\rm \ or \ } \hat{D}_L (x_0,\lambda) {\rm \ is \ undefined}
\right\}
$$
and
$$
\e_1=\left\{
x_0\in\incomp(c_sn,c_d)
\right\}.
$$
Note $\hat{D}_L(x_0,\lambda )$ is defined if $\e_1$ holds. Thus we may rewrite $\e$ as 
$$
\e = \e_1 \cap \e_0.
$$
Then 
$$
\e^c= \e_1^c \cup (\e_1\cap \e^c)=\e_1^c \cup (\e_1 \cap \e_0^c).
$$ 
So the probability we want to estimate can be rephrased as
$$
\e^c \cap \e_K \subseteq (\e_1^c \cap \e_K)\cup 
(\e_1 \cap \e_0^c \cap \e_K).
$$
Thus
$$
\P(\e^c\cap \e_K) \le \P (\e_1^c \cap \e_K)
+\P(\e_1 \cap \e_0^c\cap \e_K).
$$
By Lemma \ref{proincomp}, the first term can be bounded to be:
$$
 \P (\e_1^c \cap \e_K) \le 2\exp(-c_{\ref{proincomp}}pn).
$$
To estimate the second term $\P(\e_1 \cap \e_0^c \cap \e_K)$, consider
$$
\e_1 \cap \e_0^c \cap \e_K
=\left\{
x_0 := A^{-1} u/ \V A^{-1} \V_2 \in S_{D_0} \wedge \e_K
\right\}.
$$
Define $u_0:= Ax_0 = u/ \V A^{-1} u \V_2$ and $\e_K$ implies 
$$
\V u_0 \V_2 = \V Ax_0\V_2 \le \V A\V \le C_{op}\sqrt{pn}.
$$ 
Thus, $u_0$ belongs to  a one-dimensional interval. More precisely, 
$$
u_0\in \span(u)\cap C_{op}\sqrt{pn} B_2^n =:E.
$$
So
$$
\e_1\cap \e_0^c \cap \e_K \subseteq 
\left\{
\exists x_0 \in S_{D_0}, \exists u_0 \in E : 
Ax_0 =u_0 \wedge \e_{op}
\right\}.
$$
Now, choose 
$$
\beta_0 = \frac{L\sqrt{\log (2D_0)}}{D_0}.
$$
Let $\M$ be some fixed $(C_{op}\beta_0 \sqrt{pn})-$net of the interval $E$ with cardinality
$$
|\M| \le \frac{C_{op}\sqrt{pn}}{C_{op}\beta_0 \sqrt{pn}}
=\frac{1}{\beta_0}\le D_0.
$$
Therefore for $u_0 \in E$ we there exists $v_0 \in \M$ such that $\V u_0 - v_0 \V_2 \le C_{op}\beta_0 \sqrt{pn}$. We also have $\V Ax_0 -v_0 \V_2 \le C_{op}\beta \sqrt{np}$ since $Ax_0 = u_0$. Therefore
$$
\e_1 \cap \e_0^c \cap \e_K
\subseteq \left\{
\exists x_0 \in S_{D_0}, \exists v_0 \in \M:
\V Ax_0 -v_0 \V_2 \le C_{op}\beta_0 \sqrt{pn} \wedge \e_{op}
\right\}.
$$
Finally, applying Lemma \ref{sbsublevel} and a union bound argument for all $v_0\in \M$,
$$
\P (\e_1 \cap \e_0^c\cap  \e_{op}) \le |\M|n^{-c_{\ref{sbsublevel}}'n} \le D_0 n^{-c_{\ref{sbsublevel}}'n} \le n^{-c_{\ref{sbsublevel}}'n/2} 
$$
where  $D_0 \le n^{c/\lambda}$, and since we can assume that constant  $c_{\ref{sbsublevel}}>0$ sufficient small. Our proof is complete.

\end{proof}

\section{Small ball probability for quadratic forms}\label{SecSBquad}

Now, we use the machinery developed in \cite{Invofsym} to estimate small ball probabilities. Recall that by Proposition \ref{distqua}, the distance problem reduces to estimating Levy concentration function for the self-normalized quadratic forms:
\begin{equation}\label{lvqua}
\L \left\{ 
\frac{ |\langle A^{-1}X,X \rangle |}{\sqrt{1+\V A^{-1} X\V_2^2}}, \vv \sqrt{p}
\right\}.
\end{equation}

The goal of this section is to prove the following estimate, for the non-sparse version, see Theorem 8.1 in \cite{Invofsym}.

\begin{theorem}\label{sbqua}{\rm (Small ball probabilities for quadratic forms.)} Let $A$ be an $n\times n$ random matrix satisfies Assumption \ref{assump} and $p\ge n^{-c_p}$. Let $X$ be a random vector in $\R^n$ whose entries are 
identically distributed, and satisfy the same assumption as those of $A$. There exist constants $c_p, C_{\ref{sbqua}},c_{\ref{sbqua}},c_{\ref{sbqua}}'$ depend only on the parameters $C_{op}$ and $M_4$, and such that the following holds. For every $\vv\ge 0$ and $u\in \R$, one has
$$
\P \left\{
\frac{|\langle A^{-1}X,X \rangle - u|}{\sqrt{1+\V A^{-1} X\V_2^2}} \le \vv\sqrt{p} \wedge \e_{op} 
\right\}
\le C_{\ref{sbqua}}\vv^{1/9} +2\exp(-n^{c_{\ref{sbqua}}})+\exp(-c_{\ref{sbqua}}'pn).
$$
\end{theorem}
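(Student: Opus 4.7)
The plan is to follow the symmetric-case strategy of Vershynin (Theorem 8.1 in \cite{Invofsym}), adapted to sparse $A$ and sparse $X$ through Theorem \ref{stsp} and Proposition \ref{sbasrlcd2}. The core idea is to decouple the quadratic form $\langle A^{-1}X, X\rangle$ into a form that is conditionally linear in part of $X$, whose coefficient vector enjoys large regularized LCD thanks to the structure theorem.

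First I would condition on $A$ satisfying $\e_{op}$ and introduce a random subset $J \subset [n]$ via independent Bernoulli$(1/2)$ inclusion of each index. Writing $X = X_J + X_{J^c}$ and using the symmetry of $A^{-1}$, the quadratic form splits as
$$\langle A^{-1}X, X\rangle - u = \langle A^{-1}X_J, X_J\rangle + 2\langle A^{-1}X_J, X_{J^c}\rangle + \langle A^{-1}X_{J^c}, X_{J^c}\rangle - u.$$
Conditioning on $A$ and on $X_J$, the cross term is a linear form in the independent sparse vector $X_{J^c}$ with coefficient vector $v := 2(A^{-1}X_J)|_{J^c}$. The self-term $\langle A^{-1}X_{J^c}, X_{J^c}\rangle$ is handled by a further decoupling inside $J^c$ (splitting $J^c$ into two halves and freezing one of them), which is routine once the linear part is controlled.

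For the linear form one needs $v/\V v \V_2$ to be incompressible and to have regularized LCD of order at least $L^{-2}n^{c/\lambda}$. I would apply Theorem \ref{stsp} to the vector $A^{-1}X_J$ (playing the role of $A^{-1}u$ in that theorem), and then pass from the full vector to its restriction to $J^c$; the loss is only a constant factor on a typical $J$, since $|J^c| \asymp n$. Given this, Proposition \ref{sbasrlcd2} applied to $\langle v, X_{J^c}\rangle$ yields
$$\L\bigl(\langle A^{-1}X, X\rangle - u,\ \vv\sqrt{p}\,\V v \V_2\bigr) \le \frac{C\vv}{\sqrt{\lambda}} + \frac{C}{\sqrt{p}\,\hat{D}_L(v/\V v \V_2,\lambda)}.$$
The self-normalizing denominator is controlled by observing that $\V A^{-1}X \V_2 \ge c\V v \V_2$ on the good event (since $v$ is essentially a restriction of $2A^{-1}X_J$ and the contribution of $A^{-1}X_{J^c}$ is subordinate under $\V A \V \le C_{op}\sqrt{pn}$), so passing from the numerator bound to the ratio bound loses only a constant. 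Choosing $\lambda \asymp n^{-c''}$ and optimizing $\vv$ produces the polynomial rate $\vv^{1/9}$, together with the failure probabilities $\exp(-n^c)$ from Theorem \ref{stsp} and $\exp(-c'pn)$ from $\e_{op}$ and the incompressibility event of Theorem \ref{sbcomp}.

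The main obstacle is synchronizing the two layers of randomness in the decoupling. Theorem \ref{stsp} gives LCD control of $A^{-1}u$ for a \emph{fixed} $u$, but after decoupling the role of $u$ is played by $X_J$, which is random and correlated with $A$. To avoid circularity one must first freeze $X_J$, apply Theorem \ref{stsp} over the remaining randomness in $A$, and only then integrate over $X_J$ via Fubini. A secondary technical point is that the sparsity rescales every small-ball bound by $\sqrt{p}$, so the LCD lower bound $L^{-2}n^{c/\lambda}$ must still dominate $1/\sqrt{p}$ after inversion; this is precisely what forces the condition $p \ge n^{-c_p}$ and ultimately constrains the sparsity reach of Theorem \ref{mth}.
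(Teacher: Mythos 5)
Your decoupling is not the one the paper uses and has a genuine gap. You split $X = X_J + X_{J^c}$ directly and claim that after conditioning on $A$ and $X_J$ the form becomes linear in $X_{J^c}$, with the remaining self-term $\langle A^{-1}X_{J^c},X_{J^c}\rangle$ ``handled by a further decoupling inside $J^c$, which is routine.'' It is not routine: conditioning on $X_J$ leaves
$$
\langle A^{-1}X,X\rangle = \langle A^{-1}X_J,X_J\rangle + 2\langle A^{-1}X_J,X_{J^c}\rangle + \langle A^{-1}X_{J^c},X_{J^c}\rangle,
$$
and the last term is still a genuine quadratic form in the free randomness $X_{J^c}$. ``Decoupling inside $J^c$'' just reproduces the same problem on a smaller index set with no termination, and you would still need to feed a small-ball bound for that residual quadratic form into the linear Littlewood--Offord estimate, which your proposal does not do.

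The paper avoids this by applying Lemma \ref{declemma} (Vershynin's decoupling, Lemma~8.4 in \cite{Invofsym}): by Cauchy--Schwarz with an independent copy $X'$,
$$
\L\bigl(\langle GX,X\rangle,\vv\bigr)^2 \le \P_{X,X'}\bigl\{\,|\langle G(P_{J^c}(X-X')),\,P_J X\rangle - v| \le \vv\,\bigr\},
$$
where $v$ is a random constant depending only on the $J^c\times J^c$ minor of $G$ and on $P_{J^c}X,P_{J^c}X'$. The symmetrization is precisely what kills the residual self-term: after conditioning on $P_{J^c}X,P_{J^c}X'$, the right-hand side is genuinely a shifted linear form in $P_J X$. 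The structure theorem is then applied to $x_0 = A^{-1}(P_{J^c}(X-X'))/\V A^{-1}(P_{J^c}(X-X'))\V_2$ with $P_{J^c}(X-X')$ frozen, which agrees with your Fubini instinct but for a different coefficient vector than $A^{-1}X_J$. A secondary issue: your linear form runs over $J^c$ rather than $J$, so you would need ${\rm spread}(x_0)\subseteq J^c$ for Proposition \ref{sbrlcd2} to apply, whereas the paper arranges ${\rm spread}(x_0)\subseteq J$; and passing from the regularized LCD of the full vector $A^{-1}X_J$ to that of its restriction to $J^c$ is not a ``constant-factor loss'' in general — the regularized LCD is defined over subsets of the spread set, and restricting to indices outside it gives no control. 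Fix the decoupling to use Lemma \ref{declemma} and carry the spread assignment through, and the rest of your outline (normalizing by $\V A^{-1}\V_{\HS}$ via Proposition \ref{siaix}, then optimizing $\vv_0,\lambda$) is essentially what the paper does.
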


%In particular, we have a desired bound for Levy concentration function in (\ref{lvqua}), namely $C_{\ref{sbqua}}+2\exp(-n^{c_{\ref{sbqua}}})+\P(\e_{op}^c)$. 

To prove Theorem \ref{sbqua}, we will first decouple the enumerator $\langle A^{-1} X,X \rangle$ from the denominator $\sqrt{1+\V A^{-1}X \V_2^2}$ by showing that $\V A^{-1} X\V_2 \sim \V A^{-1} \V_{\HS}$ with high probability. Then we adapt argument from \cite{Invofsym} to decouple $\langle A^{-1} X,X \rangle$. Finally, by condition on $X$ we obtain a linear form, and we can estimate its small ball probabilities using the Littlewood-Offord theory.

The following result is an analog of Proposition 8.2 in \cite{Invofsym}, it compares the size of the denominator $\sqrt{1+\V A^{-1}X \V_2^2 }$ to $\V A^{-1} \V_{\HS}$.

\begin{proposition}\label{siaix}(Size of $A^{-1} X$) Let $A$ be an $n\times n$ random matrix satisfies Assumption \ref{assump}. Let $X$ be a random vector in $\R^n$ whose entries are 
identically distributed, and satisfy the same assumption as those of $A$. 
There exist constants $c_{\ref{siaix}},C_{\ref{siaix}},c_{\ref{siaix}}'>0$ that depend only on the parameter $C_{op}$ and $M_4$ from the assumption, and such that the following holds. Let $n^{-c_{\ref{siaix}}} \le \lambda \le c_{\ref{siaix}}$. The random matrix $A$ has the following property with probability at least $1-\exp(-c_{\ref{siaix}}np)$. If $C_{op}$ holds, then for every $\vv >0$, one has:

\begin{flushleft}
(i) with probability of at least 
$1-\exp(-c_{\ref{siaix}}' pn)$ in $X$, we have 
\end{flushleft}
$$
\V A^{-1} X \V_2 \ge C_{\ref{siaix}}^{-1};
$$
(ii) with probability at least $1-\vv$ in $X$, we have 
$$
\V A^{-1} X \V_2 \le \sqrt{p} \vv^{-1/2} \V A^{-1} \V_{\HS};
$$
(iii) with probability at least $1- C_{\ref{siaix}}\vv/\sqrt{\lambda} - n^{c_{\ref{siaix}}'/\lambda}$
in $X$, we have 
$$
\V A^{-1} X\V_2 \ge \sqrt{p} \vv \V A^{-1} \V_{\HS}.
$$
And the same result of (iii) still hold if we replace $X$ by an i.i.d. random vector with $\L (X_i,\vv_0) \le 1 - c_0p$. In this case $C_{\ref{siaix}}, c_{\ref{siaix}}'$ would also depend on $p_0,\vv_0$.
\end{proposition}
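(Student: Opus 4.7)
The strategy is to handle parts (i), (ii), (iii) in order of increasing difficulty, with (iii) carrying essentially all of the work. Throughout let $V := A^{-1}$ and write $r_i$ for its $i$-th row, so that $r_i = V e_i$ by symmetry of $A$.

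Parts (i) and (ii) are standard. For (i), the inequality $\V X \V_2 \le \V A\V \, \V A^{-1} X\V_2$ reduces the claim, on $\e_{op}$, to showing $\V X \V_2 \ge c\sqrt{pn}$ with probability $\ge 1 - \exp(-c'pn)$. Decomposing $\V X \V_2^2 = \sum_j \delta_j \xi_j^2$, Lemma \ref{tsbd} provides constants (depending only on $M_4$) such that $\P(\xi_j^2 \ge c_{\ref{tsbd}}^2) \ge c'_{\ref{tsbd}}$, so the indicators $\mathbf{1}\{\delta_j = 1,\, \xi_j^2 \ge c_{\ref{tsbd}}^2\}$ are independent Bernoullis with mean at least $c'_{\ref{tsbd}} p$, and Chernoff delivers the desired lower bound on their count, hence on $\V X\V_2^2$. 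For (ii), the coordinates $X_j = \delta_j \xi_j$ are independent with $\E X_j = 0$ and $\E X_j^2 = p$, so $\E[XX^T] = pI_n$, whence $\E_X \V A^{-1}X\V_2^2 = p\,\V A^{-1}\V_{\HS}^2$; Markov applied to $\V A^{-1}X\V_2^2$ then closes (ii).

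Part (iii) is the substantive one. The plan is to produce an event $\Omega_A$ of $A$-probability $\ge 1 - \exp(-cnp)$ on which every normalized row $r_i/\V r_i\V_2$ of $V$ is simultaneously incompressible and has large regularized LCD, and then transfer this into per-coordinate small-ball bounds for $\langle r_i, X\rangle$ which I aggregate. Concretely, apply Theorem \ref{stsp} with the fixed vector $u = e_i$ for each $i \in [n]$: for each $i$, with $A$-probability $\ge 1 - 2\exp(-c'pn)$, the vector $r_i/\V r_i\V_2$ lies in $\incomp(c_sn, c_d)$ and $\hat D_L(r_i/\V r_i\V_2, \lambda) \ge L^{-2} n^{c/\lambda}$. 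A union bound over $i \in [n]$ absorbs the factor $n$ into the exponential (using $pn \gg \log n$) and defines $\Omega_A$. Conditioning on $A \in \Omega_A$, Theorem \ref{sbrlcd2} (with $L = (p\delta)^{-1/2}$) gives, for every $i$,
\[
\P_X\bigl(|\langle r_i, X\rangle| \le \sqrt{p}\,\vv\,\V r_i\V_2\bigr) \;\le\; \frac{C\vv}{\sqrt{\lambda}} + \frac{C}{\sqrt{p}\,\hat D_L(r_i/\V r_i\V_2,\lambda)} \;\le\; \frac{C\vv}{\sqrt{\lambda}} + n^{-c''/\lambda},
\]
using the structure-theorem LCD bound in the last step.

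The main obstacle is that the rows $r_i$ are highly correlated functions of $A$, so a naive union bound over $i$ ruins the failure probability and in any event would not return a bound in terms of $\V V\V_{\HS}^2 = \sum_i \V r_i\V_2^2$. I circumvent this by a Markov argument on the weighted mass of bad indices. Let $\mathcal I := \{i : |\langle r_i, X\rangle|^2 \ge p\vv^2 \V r_i\V_2^2\}$ and $Y := \sum_{i \notin \mathcal I} \V r_i\V_2^2$; linearity together with the per-coordinate bound above gives $\E_X Y \le (C\vv/\sqrt{\lambda} + n^{-c''/\lambda})\,\V A^{-1}\V_{\HS}^2$, so Markov ensures $\sum_{i \in \mathcal I} \V r_i\V_2^2 \ge \tfrac12 \V A^{-1}\V_{\HS}^2$ with $X$-probability $\ge 1 - C\vv/\sqrt{\lambda} - n^{-c'''/\lambda}$. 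Restricting the defining sum $\V VX\V_2^2 = \sum_i |\langle r_i, X\rangle|^2$ to $i \in \mathcal I$ now yields $\V A^{-1}X\V_2^2 \ge (p\vv^2/2)\V A^{-1}\V_{\HS}^2$, and rescaling $\vv$ finishes part (iii). The final clause on an arbitrary $X$ with $\L(X_i, \vv_0) \le 1 - c_0 p$ requires no additional work: the distribution of $X$ enters the argument only through Theorem \ref{sbrlcd2}, whose sole input on $X$ is precisely such a small-ball hypothesis.
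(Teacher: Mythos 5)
Your proposal is correct and follows essentially the same route as the paper: apply the structure theorem (Theorem \ref{stsp}) to each normalized column $A^{-1}e_k/\V A^{-1}e_k\V_2$ with a union bound over $k$ to build the good $A$-event, expand $\V A^{-1}X\V_2^2 = \sum_k \V A^{-1}e_k\V_2^2\langle x_k, X\rangle^2$ using symmetry, invoke Theorem \ref{sbrlcd2} for the per-coordinate small-ball bound, and aggregate. The only cosmetic difference is in the aggregation step: the paper cites Lemma \ref{sumind} (sums of dependent random variables) with weights $p_k = \V A^{-1}e_k\V_2^2/\V A^{-1}\V_{\HS}^2$, whereas you re-derive that lemma in place via a Markov bound on the weighted mass of bad indices --- the two are the same argument.
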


The proof of this result uses the following elementary lemma, see Lemma 8.3 in \cite{Invofsym}.
\begin{lemma}\label{sumind}{\rm (Sums of dependent random variables.)}
Let $Z_1,\cdots, Z_n$ be arbitrary non-negative random variables (not necessarily independent), and $p_1,\cdots,p_n$ be non-negative numbers such that 
$$
\sum_{k=1}^n p_k =1. 
$$
Then for every $\vv \in \R$ one has
$$
\P \left\{ \sum_{k=1}^n p_k Z_k \le \vv \right\}
<2 \sum_{k=1}^n p_k \P\left\{ Z_k \le 2\vv  \right\}.
$$

\end{lemma}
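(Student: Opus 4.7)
The plan is to reduce the inequality to a pointwise (sample-path) bound on indicator functions, and then to take expectations. The key observation is that if the weighted average $\sum_k p_k Z_k$ is no larger than $\vv$, then the weighted mass of indices $k$ for which $Z_k$ exceeds $2\vv$ must be strictly less than $1/2$, so the complementary weighted mass of ``small'' indices must exceed $1/2$. This is a deterministic consequence of non-negativity of the $Z_k$'s and of the fact that the $p_k$'s sum to one; no independence is used.

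Concretely, I would fix $\vv$, set $\chi_k := \mathbf{1}\{Z_k > 2\vv\}$, and exploit the pointwise inequality $Z_k \ge 2\vv\, \chi_k$, which is valid because $Z_k \ge 0$. Summing with weights $p_k$ gives
$$
\sum_{k=1}^n p_k Z_k \;\ge\; 2\vv \sum_{k=1}^n p_k \chi_k.
$$
On the event $\{\sum_k p_k Z_k \le \vv\}$ this forces $\sum_k p_k \chi_k \le 1/2$, and since $\sum_k p_k = 1$ the complementary mass satisfies
$$
\sum_{k=1}^n p_k \mathbf{1}\{Z_k \le 2\vv\} \;=\; 1 - \sum_{k=1}^n p_k \chi_k \;\ge\; \tfrac{1}{2}.
$$
Rearranging, one obtains the pointwise bound
$$
\mathbf{1}\!\left\{\sum_{k=1}^n p_k Z_k \le \vv\right\} \;\le\; 2\sum_{k=1}^n p_k \mathbf{1}\{Z_k \le 2\vv\}.
$$

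Taking expectations of both sides, using linearity and the tautology $\E \mathbf{1}\{E\} = \P(E)$, yields precisely the claimed inequality. The argument is entirely elementary and there is no genuine obstacle; the only care required is to note that $Z_k \ge 0$ is what makes $Z_k \ge 2\vv\chi_k$ pointwise valid (in particular in the regime $\vv \ge 0$, which is the only non-trivial one since the left-hand side vanishes for $\vv < 0$). The strict inequality in the statement is effectively a non-degeneracy remark and is absorbed by the same one-line argument.
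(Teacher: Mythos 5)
Your proof is correct and follows essentially the same argument as the paper's (which is taken from Vershynin): Markov's inequality applied to the discrete measure $\{p_k\}$ on indices, recast as a pointwise inequality of indicators, followed by taking expectations. The only cosmetic wrinkle is the strict-vs-nonstrict inequality in the statement (the source has $\P\{\sum p_k Z_k < \vv\} \le 2\sum p_k \P\{Z_k < 2\vv\}$ for $\vv \ge 0$), and you correctly identify that this is immaterial to the argument.
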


\begin{proof}[Proof of Proposition \ref{siaix}]
Denote $e_1,\cdots,e_n$  the canonical basis of $\R^n$, and
$$
x_k := \frac{A^{-1}e_k}{\V A^{-1}e_k \V_2},k=1,\cdots,n.
$$

Now, apply Structure Theorem \ref{stsp} together with a union bound over $k=1,\cdots,n$. More specifically, choose $L=L_0=(\delta_0 p)^{
-1/2}$ (the choice of $\delta_0$ see remark \ref{delta0}). The random matrix with probability at least $1-n2e^{-c_{\ref{stsp}}'pn}\ge 1-2 e^{-c_{\ref{stsp}}'pn/2}$ has following property: if $\e_{op}$ holds then 
$$
x_k \in \incomp(c_sn,c_d), \hat{D}_L (x_k,\lambda )\ge L^{-2} n^{c_{\ref{stsp}}/\lambda}, k=1,2,\cdots,n.
$$
From now on, let us fix a realization of $A$ satisfies above property. Without loss of generality, we may also assume that $\e_{op}$ holds.

(i) First, we have
$$
\V X \V_2 \le \V A \V \V A^{-1} X\V_2.
$$
By the definition of event $\e_{op}$, we have $\V A \V \le C_{op} \sqrt{pn}$. Moreover, Chernoff's inquality together with the Tensorization Lemma \ref{tens} implies that the random vector $X$ satisfies $\V X \V_2 \ge c\sqrt{pn}$ with probability at least $1-\exp(-cpn)$. Here $c$ is a constant only depending on $M_4$. Then we have $\V A^{-1} X\V_2 \ge \frac{c}{C_{op}}$ with the same probability. So we proved (i).

(ii) Using the fact that $A$ is symmetric, we have
$$
\V A^{-1} X\V_2^2 =\sum_{k=1}^n \langle A^{-1} X,e_k \rangle^2 
=\sum_{k=1}^n \langle A^{-1} e_k,X \rangle^2 
=\sum_{k=1}^n \V A^{-1} e_k\V_2^2 \langle x_k,X \rangle^2. 
$$
Recall that we also have $X_i=\delta_i \xi_i$, where $\delta_i$s are Bernoulli with parameter $p$ and $\xi_i$s are random variables with mean 0 variance 1. Therefore, 
$$
\E_X \langle x_k, X \rangle^2 
= \E_X \sum_{i=1}^n x_{k,i}^2 X_i^2= p
$$
So,
$$
\E_X \V A^{-1}X \V_2^2 
= \sum_{k=1}^n p\V A^{-1} e_k\V_2^2 =p\V A^{-1} \V_{\HS}^2.
$$
Part (ii) follows directly from an application of Markov's inequality.

(iii) Now, we fix $k\le n$. Then $\langle x_k, X\rangle $ is a sum of independent random variables: $\sum_{i=1}^n x_{k,i}X_i$. We can estimate this sum using Proposition \ref{sbrlcd2} combined with the estimated on the regularized LCD of $x_k$. Therefore
\begin{equation}\label{sbinner}
\L\left(
\langle x_k,X \rangle  , \sqrt{2p} \vv
\right)
\le C_{\ref{sblcd2}}\left(
\frac{\vv}{\sqrt{\lambda}} +
p^{-1/2}L^2 n^{-c_{\ref{stsp}}/\lambda}
\right).
\end{equation}
Now, together with estimates for all $k$ using (\ref{sbinner}), Lemma \ref{sumind}  with $p_k = \V A^{-1} e_k\V_2^2 / \V a^{-1} \V_{\HS}^2$ and that $\sum p_k =1$. We have
\begin{equation}
\begin{array}{rl}
\P_X \left\{
\V A^{-1} X \V_2 \le \vv \sqrt{p} \V A^{-1} \V_{\HS} 
\right\}
& =\displaystyle\P\left\{
\sum_{k=1}^n p_k \langle x_k,X \rangle^2 \le p\vv^2 
\right\}\\
& \le 2 \displaystyle\sum_{k=1}^n p_k 
\P \left\{ \langle x_k,X \rangle^2 \le 2p\vv^2 \right\}\\
& \le 2 C \displaystyle\left(
\frac{\vv}{\sqrt{\lambda}} +
p^{-3/2}n^{-c_{\ref{stsp}}/\lambda}
\right)
\end{array}
\end{equation}
We complete the proof using the range of $\lambda$ and $p$. To prove the same result hold for $X$ replaced by an i.i.d. random vector with $\L (X_i,\vv_0) \le 1 - c_0p$. We only need to notice that to derive (\ref{sbinner}) from Theorem \ref{sbrlcd}, above condition is sufficient.
\end{proof}

Decoupling the quadratic form is based on the following Lemma, see Lemma 8.4 in \cite{Invofsym}.

\begin{lemma}\label{declemma}{\rm (Decoupling quadratic forms)}. Let $G$ be an arbitrary symmetric $n\times n$ matrix, and let $X$ be a random vector in $\R^n$ with independent coordinates. Let $X'$ denote an independent copy of $X$. Consider a subset $J\subset [n]$. Then for every $\vv \ge 0$, one has
\begin{equation}
\begin{array}{rl}
\L(\langle GX,X \rangle,\vv )^2 & =
\displaystyle\sup_{u\in \R} \P \left\{
|\langle GX,X \rangle -u | \le \vv 
\right\}^2\\
& \le \displaystyle\P_{X,X'} \left\{
| \langle G (P_{J^c}(X-X')),P_J X  \rangle - v|\le \vv
\right\}
\end{array}
\end{equation}
where $v$ is some random variable whose value is determined by the $J^c \times J^c$ minor of $G$ and the random vectors $P_{J^c}X,P_{J^c} X'$.

\end{lemma}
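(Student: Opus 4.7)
\medskip

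\noindent\textbf{Proof proposal.} The plan is a standard symmetrization-plus-conditioning decoupling, adapted so that the exchangeable partner shares the $J$-coordinates of $X$ but uses the $J^c$-coordinates of the independent copy $X'$. Write $Y = P_J X$, $Z = P_{J^c} X$, $Y' = P_J X'$, $Z' = P_{J^c} X'$. Since the coordinates of $X$ are independent and $J, J^c$ partition $[n]$, the pair $(Y, Z)$ is independent, and similarly $(Y', Z')$, and the whole quadruple $(Y, Z, Y', Z')$ has independent components. Using symmetry of $G$, I would expand
$$
\langle G X, X \rangle \;=\; \langle G Y, Y \rangle + 2 \langle G Z, Y \rangle + \langle G Z, Z \rangle,
$$
and define the hybrid vector $\widetilde X := Y + Z'$ (same $J$-part as $X$, but $J^c$-part taken from $X'$), together with $\widetilde W := \langle G \widetilde X, \widetilde X \rangle$ and $W := \langle G X, X \rangle$.

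The key structural observation is that, conditionally on $Y$, the two random variables $W$ and $\widetilde W$ are independent (each depends only on its own $J^c$-block, $Z$ or $Z'$ respectively), while each has the same conditional distribution given $Y$. I would then fix $u \in \R$ realizing the supremum defining $\L(\langle GX,X\rangle, \vv)$, and apply Jensen/Cauchy--Schwarz to the inner conditional probability:
$$
\P(|W - u| \le \vv)^2 \;=\; \bigl(\E_Y\, \P_{Z \mid Y}(|W - u| \le \vv)\bigr)^2 \;\le\; \E_Y \bigl(\P_{Z \mid Y}(|W - u| \le \vv)\bigr)^2.
$$
By the conditional independence of $W$ and $\widetilde W$ given $Y$, the right side equals $\P(|W-u|\le \vv,\, |\widetilde W - u|\le \vv)$, and the triangle inequality upgrades this to $\P(|W - \widetilde W| \le 2\vv)$.

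Finally, I would compute the difference explicitly:
$$
W - \widetilde W \;=\; 2 \langle G(Z - Z'), Y \rangle + \langle G Z, Z \rangle - \langle G Z', Z' \rangle \;=\; 2 \langle G\, P_{J^c}(X - X'),\, P_J X \rangle - 2v,
$$
where $v := \tfrac{1}{2}\bigl(\langle G Z', Z' \rangle - \langle G Z, Z \rangle\bigr)$. Since $Z = P_{J^c}X$, $Z' = P_{J^c}X'$, and $\langle G Z, Z \rangle$ only involves entries $G_{ij}$ with $i,j \in J^c$, the quantity $v$ is determined by the $J^c \times J^c$ minor of $G$ and the vectors $P_{J^c} X$, $P_{J^c} X'$, as required. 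Dividing through by $2$ converts the event $\{|W - \widetilde W| \le 2\vv\}$ into the event $\{|\langle G\, P_{J^c}(X-X'),\, P_J X \rangle - v| \le \vv\}$, completing the bound.

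There is no serious obstacle here; the proof is essentially bookkeeping. The only point one has to check with care is the conditional-independence step: one must be sure that the hybrid $\widetilde X$ is constructed so that $W$ and $\widetilde W$ share exactly the $Y$-variables (so that one can condition them out) and are otherwise built from disjoint, independent randomness. Everything else follows from symmetry of $G$ and the Cauchy--Schwarz trick.
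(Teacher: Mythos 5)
Your proof is correct, and it is essentially the standard argument behind this decoupling lemma (the paper states it without proof, citing Lemma 8.4 of Vershynin's work on symmetric random matrices, whose proof proceeds exactly as you do: split $X$ across $J$ and $J^c$, form the hybrid vector $\widetilde X$, exploit conditional i.i.d.\ of $W$ and $\widetilde W$ given $P_J X$ via Jensen, and then pass to $W - \widetilde W$). The only cosmetic remark is that fixing a maximizing $u$ is unnecessary; the bound holds for each $u$ and the right-hand side is $u$-free, so taking the supremum is immediate.
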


Now, we are ready to prove Theorem \ref{sbqua}. The argument is based on the decoupling lemma and Littlewood-Offord theory which stated earlier. The proof is a modification of Section 8.3 in \cite{Invofsym}. Although the proof structure is the same as in \cite{Invofsym}, we still need to go into details to catch the effect of sparsity. 

\vspace{0.5cm}
{\bf Step 1: Constructing a random subset $J$ and assignment spread$(x)$.}
We start by decomposing $[n]$ into two random sets $J$ and $J^c$. To this end, we consider independent ${0,1}-$-valued random variables $\gamma_1,\cdots,\gamma_n$. with $\E \gamma_i =c_{oo}/2$. We also define
$$
J:=\{ i\in [n]: \gamma_i =0 \}
$$
Then $\E |J^c| =c_{oo}n/2$. By a in large deviation inequality (\cite{probmeth} Theorem A.1.4), the inequality
\begin{equation}\label{cond1}
|J^c| \le c_{oo}n
\end{equation}
holds with high probability:
$$
\P \left\{ (\ref{cond1}) {\rm \ holds} \right\} \ge 1- 2\exp(-c_{oo}' pn)
$$
where $c_{oo}'= c_{oo}^2/2$.

Fix a realization of $J$ that satisfies (\ref{cond1}). By Lemma \ref{incompsp}, at least $2c_{oo}n$ coordinates of a vector $x\in \incomp(c_sn,c_d)$ satisfy the regularity condition. So for each vector $x\in \incomp(c_sn,c_d)$ we can assign a subset
$$
{\rm spread}(x) \subseteq J,\ |{\rm spread}(x)|=\left \lceil{c_{oo}n}\right \rceil 
$$
so that the regularity condition holds for all $k\in $spread$(x)$. If there is more than one way to assign spread$(x)$ to $x$, we only need to  choose one fixed way. This results in an assignment that depends only on the choice of the random set $J$. We will use this specific assignment $J$ in applications of Definition \ref{rLCDdef1} for regularized LCD.

\vspace{0.5cm}
{\bf Step 2. Estimating the denominator $\sqrt{1+\V A^{-1}\V_2^2}$ and LCD of the inverse.}
By Lemma \ref{siaix}, we may replace the denominator $\sqrt{1+\V A^{-1} X \V_2^2}$ by $\V A^{-1} \V_{\HS}$ in (\ref{lvqua}). Let $\vv_0 \in (0,1)$ and let $X'$ denote an independent copy of the random vector $X$. Then we consider following event which is determined by the random matrix $A$, random vectors $X,X'$ and the random set $J$:
\begin{equation}\label{cond2}
\sqrt{\vv_0 p^{-1}}\sqrt{1+\V A^{-1} X\V_2^2}
\le \V A^{-1} \V_{\HS} \le \frac{1}{\vv_0 \sqrt{p}} 
\V A^{-1} (P_{J^c} (X-X'))\V_2
\end{equation}
Denote $Y:=P_{J^c}(X-X')$, then we have $Y_i$s are i.i.d. random variables and 
$$
\L (Y_i,c_0) \le \L (P_{J^c}X,c_0) \le 1 - c_1p.
$$
where $c_0,c_1$ depends only on $M_4,C_{op}$. Here we simply used the fact that $P_{J^c}X$ is a sparse random variable with sparsity level $c_{oo}p/2$ and Remark \ref{delta0}. So we can apply Proposition \ref{siaix} with $A^{-1}X$ and $A^{-1}Y$. We have 
$$
\P_{A,X,X',J} \{
(\ref{cond2}){\rm \ and \  holds \ } \wedge \e_{op}^c 
\}
\ge 1 - \frac{C_{\ref{siaix}} \vv_0}{\sqrt{\lambda}} - n^{-c_{\ref{siaix}}'/\lambda } - 2e^{-c_{\ref{siaix}}'pn}.
$$
where $c_{\ref{siaix}}',C_{\ref{siaix}}$  depend only on $C_{op}$ and $M_4$.

Denote the random vector 
$$
x_0:= \frac{A^{-1} (P_{J^c} (X-X'))}{\V A^{-1} (P_{J_c} (X-X')) \V_2}
$$
and condition on an arbitrary realization of random vectors $X,X'$ and on realization of $J$ which satisfies (\ref{cond1}). Fix a value of  parameter $\lambda$ that satisfying $n^{-c_{\ref{stsp}}} \le \lambda \le \frac{c_{\ref{stsp}}}{4}$ as needed in Theorem \ref{stsp}. Then consider the event
\begin{equation}\label{cond3}
x_0 \in \incomp(c_sn,c_d) \ {\rm and \ } 
\hat{D}_{L_0} (x_0,\lambda ) \ge \delta_0 p n^{c_{\ref{stsp}}/\lambda}
\end{equation}
which depends on the random matrix $A$. By Theorem \ref{stsp}, we have
$$
\P_A \left\{
(\ref{cond3}) {\rm \ holds \ } \vee \e_{op}^c |
X, X', J {\rm satisfies\ } (\ref{cond1})
\right\}\ge 1 -2e^{-c_{\ref{stsp}}'pn}.
$$
Therefore
\begin{equation}
\begin{array}{rl}
& \P_{A,X,X',J} \left\{ (\ref{cond1},\ref{cond2},\ref{cond3}) {\rm \ hold\ } \vee \e_{op}^c \right\} \\
\ge & \displaystyle 1-2e^{-c_{oo}'n}  - \frac{C_{\ref{siaix}} \vv_0}{\sqrt{\lambda}} - n^{-c_{\ref{siaix}}'/\lambda } - 2e^{-c_{\ref{siaix}}'pn} -2 e^{-c_{\ref{stsp}}'pn}\\
=: & 1-p_0
\end{array}
\end{equation}
Thus there exists a realization of $J$ that satisfies (\ref{cond1}) and 
$$
\P_{A,X,X'} \{ (\ref{cond2},\ref{cond3}) {\rm \ hold} \vee \e_{op}^c  \} \ge 1-p_0.
$$

Now, fix such a realization of $J$ in the rest of the proof. Applying Fubini's theorem and we have $A$ has the following property with probability at least $1-\sqrt{p_0}$:
$$
\P_{X,X'} \{ (\ref{cond2},\ref{cond3}) {\rm \ hold \ } \vee \e_{op}^c | A \}
\ge 1-\sqrt{p_0}
$$
Since event $\e_{op}^c$ depends on $A$ only, the random matrix $A$ has the following property with probability at least $1-\sqrt{p_0}$. Either $\e_{op}^c$ holds, or:
\begin{equation}\label{cond4}
\e_{op} {\rm \ holds \ and \ } 
\P_{X,X'} \{ (\ref{cond2}), (\ref{cond3}) {\rm \ hold} |A \} \ge 1-\sqrt{p_0}
\end{equation}

\vspace{0.5cm}
{\bf Step 3: Decoupling. }
Recall the event we want to estimate probability is
$$
\e := \left\{ 
\frac{| \langle A^{-1} X,X \rangle -u | }{\sqrt{1+\V A^{-1} X \V_2^2}} \le \vv\sqrt{p}
\right\}.
$$
So we only need to estimate
$$
\P_{A,X} (\e \cap \e_{op})
\le \P_{A,X} \{ \e \wedge {\rm \ (\ref{cond4})\ holds} \}
+ \P_{A,X} \{ \e_{op} \wedge  {\rm (\ref{cond4})\ fails}\}
$$
The second term is bounded by $\sqrt{p_0}$. Therefore,
$$
\P_{A,X} (\e \cap \e_{op})
\le \sup_{A {\rm \ satisfies \  (\ref{cond4})}} 
\P_X(\e |A) +\sqrt{p_0}
$$
Moreover,  using property (\ref{cond4}) in a  larger probability space, we have
$$
\P_{A,X} (\e \cap \e_{op})
\le \sup_{A {\rm \ satisfies \  (\ref{cond4})}}
\P_{X,X'} \{ \e \wedge {\rm \ (\ref{cond4})\ holds} |A \}
+2\sqrt{p_0}
$$
Now, we fix a realization of a random matrix $A$ satisfying (\ref{cond4}). We only need to bound the probability
$$
p_1 :=\P_{X,X'} \{ \e \wedge {\rm (\ref{cond2}) \ holds} \}
$$
By definition of $\e$ and property (\ref{cond2}), 
$$
p_1 \le 
\P_{X,X'} \left\{
|\langle A^{-1}X,X \rangle -u| \le \frac{p\vv}{\sqrt{\vv_0}} \V A^{-1} \V_{\HS}
\right\}
$$
Now we may apply decoupling Lemma \ref{declemma}, and therefore
$$
p_1^2 \le \P_{X,X'} \{\e_0\}
$$
where 
$$
\e_0=\left\{
|\langle A^{-1}(P_{J^c}(X-X')), P_J X \rangle -v |
\le \frac{\vv p }{\sqrt{\vv_0}} \V A^{-1} \V_{\HS}
\right\}.
$$
Here $v$ is a number that depends on $A^{-1}$, $P_{J^c} X, P_{J^c}X'$ only. Use property (\ref{cond4}) and we have
$$
p_1^2 \le \P_{X,X'} \{\e_0\}
\le \P_{X,X'} \left\{ 
\e_0 \wedge {\rm (\ref{cond2},\ref{cond3}) \ hold}
\right\}+\sqrt{p_0}
$$
Now, we may divide both sides in the inequality defining the event $\e_0$ by $\V A^{-1}(P_{J^c}(X-X')) \V_2$. By definition of $x_0$ and (\ref{cond2}) and we get
\begin{equation}\label{probp1}
p_1^2 \le \P_{x,X'}
\left\{
|\langle x_0,P_JX \rangle -w |\le \sqrt{p}\vv_0^{-3/2} \vv \wedge {\rm (\ref{cond3}) \ holds}
\right\}+\sqrt{p_0}
\end{equation}
where $w=w(A^{-1},P_{J^c}X,P_{J^c}X')$ is a  number.

\vspace{0.5cm}
{\bf Step 4: The small ball probabilities of a linear form.}
Finally, the random vector $x_0$ depends only on $P_{J^c}(X-X')$, which is independent of the random vector $P_J X$. So we may fix an arbitrary realization of the random vectors $P_{J^c}X$ and $P_{J_c}X'$, this will fix vector $x_0$ and number $w$ in (\ref{probp1}). By (\ref{cond3}) we have
$$
p_1^2\le \sup_{x_0{\rm  \ satisfies \ (\ref{cond3}), \ w\in\R}}
\P_{P_J X} \left\{
|\langle x_0,P_J X\rangle -w| \le \sqrt{p}\vv_0^{-3/2}\vv
\right\}+\sqrt{p_0}
$$
So from now on, let us fix a vector $x_0\in S^{n-1}$ such that (\ref{cond3}) holds and a number $w\in \R$. This reduce the problem to estimating the small ball probability for the weighted sum of independent random variables
$$
\langle x_0,P_J X \rangle =\sum_{k\in J} x_{0,k} X_k.
$$
We now apply Proposition \ref{sbrlcd2}, noticing that we have $J\supseteq {\rm spread}(x_0)\supseteq I(x)$ as needed in the theorem. Therefore
$$
\P_{P_J X}\left\{
|\langle x_0,P_J X\rangle -w| \le \sqrt{p}\vv_0^{-3/2}\vv
\right\} 
\le \frac{C_{\ref{sbrlcd2}}\vv_0^{-3/2}\vv}{\sqrt{\lambda}} + \frac{C_{\ref{sbrlcd2}}}{\sqrt{p}\hat{D}_{L_0}(x_0,\lambda)}.
$$
Using property (\ref{cond3}) to bound the regularized LCD, we have 
$$
p_1^2 \le \frac{C_{\ref{sbrlcd2}}\vv_0^{-3/2}\vv}{\sqrt{\lambda}} + C_{\ref{sbrlcd2}}\delta_0^{-1}p^{-3/2}n^{-c_{\ref{stsp}}/\lambda}+\sqrt{p_0}.
$$
Now we set $\vv_0 = \vv^{1/2}/\lambda^{1/8}$ and estimate $\P_{A,X}(\e \cap \e_{op})$ as
\begin{equation}
\begin{array}{rl}
\P_{A,X}(\e \cap \e_{op})     \le & p_1 + 2\sqrt{p_0}  \\
     \le &  \displaystyle \left(\frac{C_{\ref{sbrlcd2}}\vv_0^{-3/2}\vv}{\sqrt{\lambda}}\right)^{1/2} + \left(C_{\ref{sbrlcd2}}\delta_0^{-1}p^{-3/2}n^{-c_{\ref{stsp}}/\lambda}\right)^{1/2} \\
     & \displaystyle + 3\left(2e^{-c_{oo}'n} + \frac{C_{\ref{siaix}} \vv_0}{\sqrt{\lambda}} + n^{-c_{\ref{siaix}}'/\lambda } + 2e^{-c_{\ref{siaix}}'pn} +2 e^{-c_{\ref{stsp}}'pn} \right)^{1/4}\\
    \le & C\displaystyle \left( e^{-cpn} + n^{-c/\lambda} + \frac{ \vv_0^{1/4}}{\lambda^{1/8}}
    +\frac{\vv_0^{-3/4}\vv^{1/2}}{\lambda^{1/4}}\right)^{1/2} \\
    \le & \displaystyle n^{-c'/\lambda} + C'\frac{\vv^{1/8}}{\lambda^{5/32}} + e^{-c'pn}
\end{array}
\end{equation}
Optimizing above probability using $n^{-c_{\ref{stsp}}} \le \lambda \le \frac{c_{\ref{stsp}}}{4}$ (see page 49 and Fact 8.6 in \cite{Invofsym}), we have  
$$
\P_{A,X}(\e \cap \e_{op})\le C'' \vv^{1/9}+ \exp(-n^{c''}) + \exp(-c'np)
$$
where $c',c'', C''$ depend only on $M_4,C_{op}$.

\section{Proof of Theorem \ref{mth}}\label{SecMproof}

Now we can combine the incompressible and compressible part to prove Theorem \ref{mth}.
\begin{proof}[Proof of Theorem \ref{mth}.]
We consider 
\begin{equation}
\begin{array}{rl}
& \displaystyle\P \left\{
\min_{x\in S^{n-1}} \V A x\V_2 \le \vv \sqrt{\frac{p}{n}} \wedge \e_{op}
\right\}\\
\le & \displaystyle\P \left\{
\inf_{x\in \comp(c_sn,c_d)} \V A x\V_2 \le \vv \sqrt{\frac{p}{n}} \wedge \e_{op}
\right\}\\
&+\displaystyle\P \left\{
\inf_{x\in \incomp(c_sn,c_d)} \V A x\V_2 \le \vv \sqrt{\frac{p}{n}} \wedge \e_{op}
\right\}\\
\end{array}
\end{equation}
The first term is bounded by $2\exp(-c_{\ref{sbcomp}}pn)$ as in (\ref{lbforcomp}). The probability for the incompressible vectors is estimated via distances in Lemma \ref{invviadist}. Finally, we only need to apply Theorem \ref{sbqua} and Proposition \ref{distqua}, and notice that $e^{-n^{c_{\ref{sbqua}}}}$ dominate the
term $e^{-cpn}$ for $p\ge n^{-c_p}$.
\end{proof}

\section{Estimate of the Spectral Norm}\label{SecSpectNorm}

In this section, we prove Theorem \ref{upbdforop}, that is to say when $\xi_{ij}$ is sub-gaussian, $\V A \V \le C\sqrt{np} $ w.h.p.. The proof use the same moment technique and structure as the proof of Theorem 1.7 in  \cite{InvofsparsenonHerm}.

\begin{proof}[Proof of Theorem \ref{upbdforop}]
First, let's consider $\xi_{ij}'$, $i,j\in [n]$ to be independent copies of $\xi_{ij}$, $i,j\in [n]$ and $\eta_{ij}:=\xi_{ij}-\xi_{ij}'$. Let $A_n'$ and $B_n$ be the matrices with entries $a_{ij}'=\delta_{ij}\xi_{ij}'$ and $b_{ij}=\delta_{ij}\eta_{ij}$ . Denote  $\E_{\xi}$ as the expectation with respect to $\xi$, conditioned on $\bm{\delta}:=(\delta_{ij})_{i,j\in[n]}$. Consider $q\ge 1$ to be an even integer. By Jensen's inequality, as operator norm is convex function of matrix entries, we have
$$
\E_{\xi} \V A_n \V^q = \E_{\xi} \V A_n - \E_{\xi'} A_n' \V^q \le \E_{\eta}\V B_n \V^q.
$$
Then, let $g_{ij}$, $i,j\in [n]$ be independent $N(0,1)$ random variables. Clearly, $\xi_{ij}-\xi_{ij}'$ is a sub-gaussian random variable, by moment condition of sub-gaussian random variable  there exists a constant $C_1$, depending on the sub-guassian norm of $\xi_{ij}$, such that $\E |\eta_{ij}|^q \le \E |C_1g_{ij}|^q$ for all $q\ge 1$. Let $W_n$ be the $n\times n$ random matrix with entries $w_{ij}=\delta_{ij}g_{ij}$. Since 
$$
\E_{\eta} \V B_n \V^q \le \E_{\eta} {\rm Tr} \left( (B_n B_n^*)^{q/2} \right)
$$
where right hand side is a polynomial of the even moments of $\eta_{ij}$ with non-negative coefficients, we have
$$
\E_{\eta}{\rm Tr} \left( (B_n B_n^*) ^{q/2} \right) \le 
C_1^q n \E_g \V W_n \V^q.
$$
Above inequality uses the elementary identity that $\tr \left( (W_n W_n^*)^{q/2} \right) = \sum_{j=1}^n \lambda_j^{q/2} (W_n W_n^*) $. Here eigenvalues $\lambda_j(W_n W_n^*)$ satisfy $|\lambda_j (W_n W_n^*)|\le \V W_n \V^2$ for all $j$.

Now we are ready to estimate $\E \V W\V^2$. Here we need to apply the following result due to Bandeira and van Handel \cite{opbdHandel}.
\begin{lemma}\label{opbdhandel}
Let $X$ be the $n\times n$ symmetric matrix with $X_{ij}=g_{ij}b_{ij}$, where $\{ g_{ij}: i\ge j\}$ are i.i.d.$\sim N(0,1)$ and $\{ b_{ij} : i\ge j \}$ are given scalars. Let
$$
\sigma := \max_{i} \sqrt{\sum_j b_{ij}^2},\ \ \sigma_{*}:= \max_{ij} |b_{ij}| 
$$

Then 
$$
\E \V X \V \le (1+\vv) \left\{2 \sigma +\frac{6}{\sqrt{\log (1+\vv)}}\sigma_{*} \sqrt{\log n} \right\}
$$
for any $\vv \in (0,1/2)$.
\end{lemma}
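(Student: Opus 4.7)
My plan is to prove Lemma \ref{opbdhandel} in the spirit of Bandeira and van Handel. Since $X$ is symmetric, write $\V X\V = \sup_{u\in S^{n-1}}|u^T X u|$, so that after absorbing the sign it suffices to bound $\E \sup_{u\in S^{n-1}} Y_u$ where $Y_u := u^T X u$ is a centered Gaussian process indexed by the sphere. A naive $\vv$-net union bound on $S^{n-1}$ would lose a factor of order $\sqrt{n}$, because covering $S^{n-1}$ requires $(3/\vv)^n$ points; hence a sharper Gaussian comparison tool is needed in order to get the sharp leading constant $2\sigma$.

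The core device I would use is Sudakov--Fernique: construct an auxiliary centered Gaussian process $Z_u$ on $S^{n-1}$ satisfying $\E(Z_u - Z_v)^2 \ge \E(Y_u - Y_v)^2$ for every $u,v$, so that $\E\sup Y_u \le \E\sup Z_u$. I would take $Z_u$ as an independent sum of two pieces. The first piece is matched to the row $\ell^2$ structure of $X$ and is essentially a linear Gaussian form whose supremum over $S^{n-1}$ can be evaluated exactly, producing the sharp leading constant $2\sigma$; this is the same mechanism that yields the Wigner bound $\E\V G\V \le 2\sqrt{n}$ for an i.i.d. standard Gaussian symmetric $G$. The second piece handles the residual fluctuations whose pointwise variance is governed by $\sigma_*^2$; its supremum is estimated by a standard Gaussian maximal inequality over an $\vv$-net of cardinality at most $(3/\vv)^n$, producing the $C_\vv \sigma_*\sqrt{\log n}$ term with $C_\vv = 6/\sqrt{\log(1+\vv)}$.

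The main obstacle is verifying the variance comparison $\E(Z_u-Z_v)^2 \ge \E(Y_u-Y_v)^2$: expanding both sides reduces this to showing that $\sum_{i,j} b_{ij}^2(u_i u_j - v_i v_j)^2$ is dominated by the sum of a $\sigma^2 \V u-v\V_2^2$ contribution and a $\sigma_*^2 \V u-v\V_2^2$ contribution with the correct constants, and it is precisely in order to absorb the cross terms that one pays the slight inflation $(1+\vv)$ in the final bound. Once Lemma \ref{opbdhandel} is in hand, the ambient proof of Theorem \ref{upbdforop} is finished by applying the lemma to $W_n$ conditional on $\bm\delta$ with $b_{ij} = \delta_{ij}$, so that $\sigma(\bm\delta)^2 = \max_i \sum_j \delta_{ij}$ and $\sigma_* = 1$; a Chernoff bound guarantees $\max_i \sum_j \delta_{ij} \le 2np$ with probability at least $1-\exp(-cnp)$ under $p\ge C\log n/n$, Gaussian concentration of $\V W_n\V$ around its mean in the vector $g$ upgrades the expectation bound to a moment bound $(\E_g \V W_n\V^q)^{1/q} \le C\sqrt{np}$ at $q \asymp np$, and then the earlier reduction $\E_\xi \V A_n\V^q \le C_1^q n\, \E_g \V W_n\V^q$ combined with Markov's inequality yields $\P(\V A_n\V \ge C\sqrt{np}) \le \exp(-c np)$.
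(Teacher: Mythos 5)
The paper does not prove this lemma: it is quoted verbatim from Bandeira and van Handel \cite{opbdHandel}, and the only role of Section~\ref{SecSpectNorm} is to apply it to $W_n$ conditionally on $\bm\delta$. So you are being asked to reprove an external result, and your sketch does not match how that result is actually established. Bandeira and van Handel obtain the sharp bound via a refined moment (trace) method: they estimate $\E\,\mathrm{Tr}\,(X^{2p})$ for $p\asymp \log n$ by carefully counting closed paths and separating the steps that contribute the row--$\ell^2$ scale $\sigma$ from those that contribute the entrywise scale $\sigma_*$. The dichotomy between $\sigma$ and $\sigma_*\sqrt{\log n}$, and in particular the interpolation parameter $\vv$ trading off the leading constant $2(1+\vv)$ against the blow-up $6/\sqrt{\log(1+\vv)}$ in the correction term, comes out of optimizing the moment exponent $p$; it is not there to ``absorb cross terms'' in a variance comparison.

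Your Sudakov--Fernique route has a concrete quantitative gap in the second piece. You propose to bound the residual process, which has pointwise standard deviation of order $\sigma_*$, by a Gaussian maximal inequality over an $\vv$-net of $S^{n-1}$. But such a net has cardinality $(3/\vv)^n$, and the Gaussian maximal inequality over $N$ points of variance at most $\sigma_*^2$ gives $\sigma_*\sqrt{2\log N} = \sigma_*\sqrt{2n\log(3/\vv)}$. That is of order $\sigma_*\sqrt{n}$, which is off from the claimed $\sigma_*\sqrt{\log n}$ by a factor $\sqrt{n/\log n}$; in the application to $W_n$ (where $\sigma_*=1$ and $\sigma\asymp\sqrt{np}$), the residual term $\sqrt{n}$ would dominate $\sigma$ for small $p$, so the sparse-matrix norm bound would break. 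The $\sqrt{\log n}$ in BvH reflects the $\asymp n$ summands in a trace (or the $\asymp n^2$ entries), not the $(3/\vv)^n$ points of a sphere net, and cannot be recovered by a plain union bound on a net. In addition, the key comparison inequality $\E(Z_u-Z_v)^2\ge\E(Y_u-Y_v)^2$ is flagged by you as the main obstacle but never verified, and it is genuinely problematic: the natural decomposition of $\sum_{ij}b_{ij}^2(u_iu_j-v_iv_j)^2$ does not cleanly split into a $\sigma^2\|u-v\|_2^2$ contribution and a small remainder with the constants required, which is exactly why BvH do not argue this way. (Your description of the downstream application of the lemma inside the proof of Theorem~\ref{upbdforop} is consistent with the paper.)
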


Let $\Omega$ be the event for all $i\in [n]$, $\sum_{j=1}^n \delta_{ij} \le \bar{C}pn$, for some $\bar{C}\ge 2$. Since $p\ge C_0 \frac{\log n}{n}$, applying Chernoff's inequality and union bound argument,  we can choose the $C_0$ large enough, such that $\P (\Omega^c) \le e^{-cpn}$ for some $c>0$. And $c$ depends only on $C_0$. Now, we can use the above Lemma \ref{opbdhandel} and assume that $\bm{\delta}\in \Omega$. Conditionally on $\bm{\delta}$, we have
$$
\E \left( \V W_n \V | \bm{\delta} \right)
\le \sqrt{\bar{C} pn} +C^* \sqrt{\log n} \le \sqrt{C' pn}.
$$
Here $C^*$ is some absolute constnat,  and $C'=2(C^*)^2 \bar{C}$. Conditioning on $\bm{\delta}$, $\V W_n \V$ can be viewed as a $\sqrt{2}$-Lipschitz function on $\R^{n(n+1)/2}$  with the standard Gaussian measure. Applying standard Gaussian concentration inequality \cite{concofmea}, we have
$$
\P \left( \V W_n \V \ge \E [\V W_n \V |\bm{\delta} ] + t \right) \le \tilde{C}\exp(-c't^2)
$$
for some absolute constants $\tilde{C},c'>0$, and any $t>0$. Therefore,
\begin{equation}
\begin{array}{rl}
 \E_g  \V W_n \V^q  & \le   (C'pn)^{q/2} + \displaystyle\int_{\sqrt{C'pn}}^{\infty} q s^{q-1}  
 \P \left[ \V W_n \V \ge s |\bm{\delta} ] \right] ds \\
     &  \le (C'pn)^{q/2} + (C''q)^{q/2},
\end{array}
\end{equation}
for some absolute constant $C''$. Now choose $q=pn$. This inequality in combination with previous inequalities yields
$$
\E_{\xi} \V A_n \V^{pn} \le n (C_2pn)^{pn/2}\le (C_2^2 pn)^{pn/2}.
$$
where $C_2$ is a positive constant depending on $C_0$ and the sub-gaussian norm of ${\xi_{ij}}$. Here we used the condition $p\ge C_0 \frac{\log n}{n}$ to absorb the factor $n$.  Finally, choosing $C_{op} >C_2^2$, we have  for any $\bm{\delta}\in \Omega$, there exists a small positive constant $c_{op}$, depending on $C_{op}$, such that 
$$
\P \left( \V A_n \V \ge C_{op} \sqrt{pn} |\bm{\delta}  \right)
\le \exp(-c_{op}pn)
$$
by applying Markov inequality. Now picking  $c_{op}$ small enough, we have 
$$
\P \left( \V A_n \V \ge C_{op} \sqrt{pn} \right)
\le \max_{\bm{\delta}\in \Omega} 
\P \left[ \V A_n \V \ge C_{op} \sqrt{pn} |\bm{\delta}  \right]
+\P (\Omega^c) \le \exp(-c_{op}pn).
$$

\end{proof}
\renewcommand\refname{Reference}
\bibliographystyle{plain}
\bibliography{feng}

\end{document}